\newtheorem{thm}{Theorem}
\newtheorem{theorem}[thm]{Theorem}
\newtheorem{proposition}[thm]{Proposition}
\newtheorem{definition}[thm]{Definition}
\newtheorem{conjecture}[thm]{Conjecture}
\newcommand{\eql}{\stackrel{\langle,\rangle}{=}}
\newcommand{\cc}{\mathbb{C}}
\newcommand{\PP}{\mathbb{P}}
\newcommand{\zzz}{\mathbb{Z}}
\newcommand{\bbD}{\mathbb{D}}
\newcommand{\calO}{\mathcal{O}}
\newcommand{\calL}{\mathcal{L}}
\newcommand{\rmR}{\mathrm{R}}
\newcommand{\rmL}{\mathrm{L}}
\newcommand{\rmT}{\mathrm{T}}
\newcommand{\rmbb}{\mathrm{B}}
\newcommand{\rmvv}{\mathrm{V}}
\newcommand{\rmdd}{\mathrm{D}}
\newcommand{\rmee}{\mathrm{E}}
\newcommand{\CT}{\mathrm{T}}
\newcommand{\fra}{\mathfrak{a}}
\newcommand{\quash}[1]{}
\newcommand{\Hr}{\mathrm{H}}
\newcommand{\wdh}{\widehat{\Hr}}
\newcommand{\Heis}{\mathsf{Heis}}
\newcommand{\Cor}{\mathfrak{C}}
\newcommand{\PT}{{\mathrm{PT}}}
\newcommand{\tch}{\widetilde{\mathsf{ch}}}
\newcommand{\Vir}{Virasoro }
\newcommand{\GW}{\mathrm{GW}}
\newcommand\ZZ{\mathsf Z}
\newcommand{\OO}{{\mathcal{O}}}
\newcommand{\LL}{{\mathbb{L}}}
\newcommand{\ch}{\mathsf{ch}}
\newcommand{\oM}{\overline{M}}
\DeclareMathOperator{\Res}{Res}
\DeclareMathOperator{\Hilb}{Hilb}
\DeclareMathOperator{\Aut}{Aut}
\DeclareMathOperator{\pt}{\mathsf{p}}
\newcommand{\rmrp}{\mathrm{R}}
\newcommand{\rmrg}{\mathrm{R}}
\newcommand{\CTp}{\mathrm{T}}
\begin{document}

\title{Virasoro constraints for stable pairs on toric $3$-folds}
\author{M. Moreira, A. Oblomkov, A. Okounkov, \and R. Pandharipande}
\newcommand{\Addresses}{{
  \bigskip
  \footnotesize

  M. Moreira, \textsc{Department of Mathematics, ETH Z\"urich}\par\nopagebreak
  \textit{E-mail address:} \texttt{moreira@math.ethz.ch}

  \medskip
  
  A.~Oblomkov, \textsc{Department of Mathematics and Statistics, Univ. of Massachusetts,
    Amherst}\par\nopagebreak
  \textit{E-mail address:} \texttt{oblomkov@math.umass.edu}

  \medskip

  A.~Okounkov, \textsc{Department of Mathematics,
    Columbia University, New York}\par\nopagebreak
  \textsc{Institute for Problems of Information Transmission, Moscow}\par\nopagebreak
  \textsc{Laboratory of Representation Theory and Mathematical Physics,
HSE, Moscow}\par\nopagebreak
  \textit{E-mail address:} \texttt{okounkov@math.columbia.edu}

  \medskip

  R.~Pandharipande, \textsc{Department of Mathematics, ETH Z\"urich}\par\nopagebreak
  \textit{E-mail address:} \texttt{rahul@math.ethz.ch}

}}

\date{August 2020}
\maketitle
\begin{abstract}
 Using new explicit formulas for the stationary $\GW/\PT$
 descendent correspondence for nonsingular
projective toric 3-folds, we show
 that the correspondence intertwines the Virasoro constraints 
in Gromov-Witten theory
for stable maps with
the Virasoro constraints for stable pairs
proposed in \cite{OblomkovOkounkovPandharipande18}. 
Since the  Virasoro constraints
in Gromov-Witten theory are known to hold in the toric case, we establish
the stationary Virasoro constraints for the theory of
 stable pairs on 
 toric $3$-folds. As a consequence, new Virasoro constraints for
tautological integrals over Hilbert schemes of points on surfaces
are also obtained. 
\end{abstract}

\maketitle
\setcounter{tocdepth}{1}
\tableofcontents
\setcounter{section}{-1}

\section{Introduction}
\label{sec:introduction}

\subsection{Stable pairs}\label{spint}
Let $X$ be a nonsingular projective $3$-fold. 
A {\em{stable pair}} $(F,s)$ on $X$ is a coherent sheaf $F$ on $X$
and a  section $s\in H^0(X,F)$ satisfying the following stability conditions:
\begin{itemize}
\item $F$ is \emph{pure} of dimension 1,
\item the section $s:\OO_X\to F$ has cokernel of dimensional 0.
\end{itemize}
To a stable pair, we associate the Euler characteristic and
the class of the support $C$ of the sheaf $F$,
$$\chi(F)=n\in \mathbb{Z} \  \ \ \text{and} \ \ \ [C]=\beta\in H_2(X,\mathbb{Z})\,.$$
For fixed $n$ and $\beta$,
there is a projective moduli space of stable pairs $P_n(X,\beta)$. 
Unless $\beta$ is an effective curve class, the moduli space
$P_n(X,\beta)$ is empty. An analysis of the
deformation theory and the construction of the virtual
cycle $[P_n(X,\beta)]^{vir}$ is given \cite{pt}.
We refer the reader to \cite{P,rp13}
for an introduction to the theory of stable pairs.

Tautological descendent classes are defined via universal structures over the
moduli space of stable pairs.
Let
$$\pi: X\times P_n(X,\beta) \rightarrow P_n(X,\beta)$$
be the projection to the second factor, and let
\[\mathcal{O}_{X\times P_n(X,\beta)}\rightarrow \mathbb{F}_n\]
be the universal stable pair on $X\times P_n(X,\beta)$. Let{\footnote{We will
always take singular cohomology with $\mathbb{Q}$-coefficients.}}
\begin{equation*}
\ch_k(\mathbb{F}_n-\mathcal{O}_{X\times P_n(X,\beta)})\in  H^*(X\times P_n(X,\beta))\, .
\end{equation*}
The following
{\em descendent classes} are our main objects of study:
$$\ch_k(\gamma) = \pi_*\left(\ch_k(\mathbb{F}_n-\mathcal{O}_{X\times P_n(X,\beta)})\cdot \gamma\right)
\in H^*(P_n(X,\beta))\, $$
for $k\geq 0$ and $\gamma\in H^*(X)$.
The summand $-\mathcal{O}_{X\times P_n(X,\beta)}$  only
affects $\ch_0$,
\begin{equation}\label{m449}
  \ch_0(\gamma) = -\int_X \gamma \, \in \, H^0(P_n(X,\beta))\, .
  \end{equation}
Since stable pairs are supported on curves, the vanishing
$$\ch_1(\gamma)=0$$
always holds. 

We will study the 
following descendent series:
\begin{equation}\label{ss33}
 \Big\langle {\mathsf{ch}}_{k_1}(\gamma_1)\cdots \mathsf{ch}_{k_m}(\gamma_m)\Big\rangle_{\beta}^{X,\PT}
\, =\, \sum_{n\in \mathbb{Z}}  q^n\, 
\int_{[P_n(X,\beta)]^{vir}}\prod_{i=1}^m\ch_{k_i}(\gamma_i)\, .
\end{equation}
For fixed curve class $\beta\in H_2(X,\mathbb{Z})$, the moduli space
$P_n(X,\beta)$ is empty for all sufficiently negative $n$. Therefore, the 
descendent series \eqref{ss33} has only finitely many polar terms.

\begin{conjecture}\cite{pt}  \label{ggtthh}
The stable pairs descendent series 
$$ \Big\langle \mathsf{ch}_{k_1}(\gamma_1)\cdots \mathsf{ch}_{k_m}(\gamma_m)\Big\rangle_{\beta}^{X,\PT}$$ 
is the Laurent expansion of a  rational function of $q$ for
all $\gamma_i\in H^*(X)$ and all $k_i\geq 0$.
\end{conjecture}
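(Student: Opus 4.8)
The plan is to establish the rationality first for nonsingular projective toric $3$-folds, where the full power of $\T$-equivariant virtual localization is available, and to reduce the entire series to finitely many \emph{local} building blocks whose rationality can be analyzed by the operator formalism for box-counting. The general projective case I would then approach, where possible, by degeneration to toric and nodal building blocks, so that the toric statement becomes the essential input.

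So fix a nonsingular toric $3$-fold $X$ with dense torus $\T=(\C^*)^3$, and apply the virtual localization formula to the series \eqref{ss33}. The $\T$-fixed stable pairs are scheme-theoretically supported on the toric $1$-skeleton: combinatorially each is encoded by a monomial ideal (a $3$-dimensional partition) at every torus-fixed point, together with an infinite outgoing leg (a $2$-dimensional partition) along each torus-invariant edge, the legs being constrained by the fixed class $\beta$. The fixed-point formula therefore rewrites \eqref{ss33} as a \emph{finite} sum over such configurations, each summand a product over the vertices and edges of the toric graph, with the variable $q$ entering only through the weight $q^{n}$ recording the Euler characteristic $n=\chi(F)$ of the local box configuration.

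Next I would incorporate the descendent insertions. Under localization each class $\ch_{k}(\gamma)$ restricts on a fixed locus to an explicit tautological weight, namely a sum over the vertices and edges meeting the support of $\gamma$ of the equivariant Chern character of $\mathbb{F}_n$ evaluated on the local box configuration and paired with $\gamma$. Collecting these weights, and performing the standard \emph{capping} operation that glues a trivial cap onto each leg, the series factors---via the TQFT gluing formalism contracting the pieces along the edges of the toric graph---into a finite product of \emph{capped descendent vertices} $\mathsf{W}$ and \emph{capped edge} terms. The edge contributions are governed by the stable pairs theory of local geometries of the form $\Pp\times\C^2$ relative to fibers, and their rationality in $q$ follows from a direct analysis of the relevant $\T$-fixed loci.

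The crux---and the step I expect to be the main obstacle---is the rationality in $q$ of the capped descendent vertex $\mathsf{W}$. Here I would pass to the Fock space description of the equivariant vertex, in which the bare vertex is realized as a matrix element of vertex operators acting on the space indexed by the leg partitions, with $q$ coupled to the energy operator. Rationality is then reduced to rationality of matrix elements, which I would deduce from the $\GW/\mathrm{DT}$ correspondence for $\C^3$ together with the quantum-cohomology description of $\Hilb(\C^2)$: the operator controlling the $q$-dependence is conjugate to quantum multiplication by the divisor, whose eigenvalues are algebraic functions of $q$, forcing the matrix elements to be rational. The presence of descendents requires controlling the additional polynomial factors contributed by $\ch_{k}(\gamma)$ and checking that they introduce no essential singularity in $q$; this is the technical heart of the argument. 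Assembling the rational capped vertices and rational capped edges along the finite toric graph then produces a finite sum of products of rational functions, hence a rational function of $q$, which together with the finiteness of polar terms already observed yields the conjecture for toric $X$.
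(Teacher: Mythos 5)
The first thing to note is that the statement you set out to prove is not proven in this paper at all: it appears as Conjecture~\ref{ggtthh}, attributed to \cite{pt}, and it remains open in the stated generality (all nonsingular projective $3$-folds $X$, all $\gamma_i\in H^*(X)$, all $k_i\geq 0$). The paper only records the known cases: for Calabi--Yau $3$-folds without descendents the statement follows from wall-crossing \cite{Br,Toda}, and for nonsingular projective toric $3$-folds with descendents it is the cumulative content of the Pandharipande--Pixton papers \cite{part1,PPstat,PP2,PPDC}, with the quintic treated separately in \cite{PPQ}. So there is no internal proof against which to measure your argument; the question is whether your proposal would itself constitute a proof, and it would not.

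There are two genuine gaps. First, in the toric case your outline reproduces the known strategy (localization onto box configurations, capped descendent vertices and edges, gluing along the toric graph), but the step you yourself flag as the technical heart --- rationality in $q$ of the capped descendent vertex --- does not follow from the argument you sketch. Conjugating the $q$-dependence into quantum multiplication by the divisor on $\Hilb(\C^2)$ and invoking algebraicity of its eigenvalues is an argument that controls the theories \emph{without} descendent insertions (local curves and relative caps); once insertions $\ch_k(\gamma)$ are present, the extra tautological weights are not spectator polynomial factors, and the known proof instead proceeds by an induction that trades descendents for relative conditions, with many intermediate rationality statements --- this is precisely why it occupies the sequence \cite{part1,PPstat,PP2,PPDC}. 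Saying the descendent factors ``introduce no essential singularity'' is a restatement of the problem, not a proof. Second, and more fundamentally, your reduction of the general projective case to the toric case by degeneration is not available: the degeneration formula would require rationality of \emph{relative} descendent series for the pieces (itself open in general), an arbitrary nonsingular projective $3$-fold does not degenerate to unions of toric and nodal pieces, and insertions $\gamma_i$ of odd cohomological degree are invisible in any toric limit. This is exactly why the statement stands in the paper as a conjecture rather than a theorem; your proposal, which correctly recalls the architecture of the toric case, cannot close it.
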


For Calabi-Yau 3-folds, Conjecture \ref{ggtthh} reduces
immediately to the rationality of the basic series
$ \langle\, 1\, \rangle_{\beta}^\PT$
proven via wall-crossing in \cite{Br,Toda}. In the presence of
descendent insertions, Conjecture \ref{ggtthh} has been proven for
 rich class of varieties 
 \cite{part1,PPstat,PP2,PPDC, PPQ}
including all nonsingular projective toric $3$-folds.

For our study of the $\GW/\PT$ descendent
correspondent and the Virasoro constraints,
modified stable pair descendent insertions will be more suitable for us. 
Let{\footnote{We set $\ch_\ell(\gamma)=0$ for $\ell<0$. 
  }
}
\[ \tch_k(\alpha)=\ch_k(\alpha)+\frac{1}{24}\ch_{k-2}(\alpha\cdot c_2 )\,, \]
where $c_2=c_2(T_X)$ is the second Chern class of the tangent bundle, 
and let 
$$\Big\langle \tch_{k_1}(\gamma_1)\cdots \tch_{k_m}(\gamma_m)\Big\rangle_{\beta}^{X,\PT}
\, =\, \sum_{n\in \mathbb{Z}}  q^n\, 
\int_{[P_n(X,\beta)]^{vir}}\prod_{i=1}^m\tch_{k_i}(\gamma_i)$$
be the corresponding descendent series.

\subsection{Virasoro constraints for stable pairs}
\label{sec:pt-vir-constraints}

Let $X$ be a nonsingular projective 3-fold with only
$(p,p)$-cohomology.{\footnote{Our results will be about
nonsingular projective toric varieties, but the formulas
here are all well-defined when there is no odd cohomology and
the Hodge classes in the even cohomology are all $(p,p)$.
To write the Virasoro constraints
for varieties with non-$(p,p)$ cohomology requires the Hodge grading
and signs. A treatment is presented in \cite {Mor} where
the Virasoro constraints are checked in several non-$(p,p)$ geometries.
The theory leads to surprising predictions for vanishings \cite{Mor}.
}}
Let
$$c_i = c_i(T_X) \in H^*(X)\, .$$
The simplest example is $\mathbb{P}^3$ with
$$ c_1= 4\mathsf{H}\, , \  \ \ c_1c_2 = 24 \mathsf{p}\, ,$$
where $\mathsf{H}$ and $\mathsf{p}$ are the classes of the
hyperplane and the point respectively.

Let \(\bbD^X_{\PT}\)
be the commutative $\mathbb{Q}$-algebra 
 with generators
 $$\big\{ \, \ch_i(\gamma)\, \big| \,  i\ge 0\, , \gamma\in H^*(X)\,
\big\} $$
subject to the natural relations
\begin{eqnarray*}
\ch_i(\lambda\cdot \gamma) & = & \lambda\, \ch_i(\gamma)\, , \\
 \ch_i(\gamma+\widehat{\gamma})& =&\ch_i(\gamma)+\ch_i(\widehat{\gamma})\, 
\end{eqnarray*}
for $\lambda\in \mathbb{Q}$ and $\gamma,\widehat{\gamma} \in H^*(X)$.

In order to define the Virasoro constraints for stable pairs, 
 we require three constructions in the algebra \(\bbD^X_{\PT}\):
\begin{enumerate}
\item[$\bullet$] Define the
derivation \(\mathrm{R}_k\) on
\(\bbD^X_{\PT}\)
by fixing the action on the generators:
\[\mathrm{R}_k(\ch_i(\gamma))=\left(\prod_{n=0}^k(i+d-3+n)\right)\ch_{i+k}(\gamma)\, ,\quad \gamma\in H^{2d}(X,\mathbb{Q})\, \]
for $k\geq -1$. In case $k=-1$, the product is empty and
$$\mathrm{R}_{-1}(\ch_i(\gamma)) = \ch_{i-1}(\gamma)\, .$$
\item[$\bullet$]
Define the element
\begin{equation*}
\mathsf{ch}_a\mathsf{ch}_b(\gamma) = \sum_i \mathsf{ch}_a(\gamma^L_i)
\mathsf{ch}_b(\gamma^R_i) \, \in\,  \bbD^X_{\PT}
\end{equation*}
where $\sum_i \gamma^L_i \otimes \gamma^R_i$
is the K\"unneth decomposition of the product, 
$$\gamma\cdot\Delta \in H^*(X\times X)\, ,$$
with the diagonal $\Delta$.
The notation
$$(-1)^{d^L d^R}(a+d^L-3)!(b+d^R-3)!\,
\mathsf{ch}_a\mathsf{ch}_b(\gamma)\, $$
will be used as shorthand for the sum
$$
\sum_i (-1)^{d(\gamma_i^L) d(\gamma_i^R)}(a+d(\gamma_i^L)-3)!
(b+d(\gamma_i^R)-3)!\, \mathsf{ch}_a(\gamma^L_i)
\mathsf{ch}_b(\gamma^R_i)\, ,
$$
where $d(\gamma_i^L)$ and $d(\gamma_i^R)$ are the (complex) degrees
of the classes.
All factorials with negative
arguments vanish. 

\vspace{5pt}
\item[$\bullet$]
Define the operator $\CT_k: \bbD^X_\PT\rightarrow \bbD^X_\PT$ by
\[\CT_k=-\frac{1}{2}\sum_{a+b=k+2}(-1)^{d^Ld^R}(a+d^L-3)!(b+d^R-3)!\, \ch_a\ch_b(c_1)+\frac{1}{24}\sum_{a+b=k}a!b!\, \ch_a\ch_b(c_1c_2)\ \]
for $k\geq -1$. The sum here
is over all ordered pairs $(a,b)$ satisfying $a+b=k+2$
with $a,b\geq 0$ (and all factorials with negative arguments vanish).
Written in terms of renormalized descendents, the formula simplifies to
\begin{equation}
  \label{ll99v}
  \CT_k=-\frac{1}{2}\sum_{a+b=k+2}(-1)^{d^Ld^R}(a+d^L-3)!(b+d^R-3)!\, \tch_a\tch_b(c_1)\, .
  \end{equation}
\end{enumerate}

\vspace{5pt}
\begin{definition} Let $\mathcal{L}^\PT_k:\bbD^X_\PT\rightarrow
\bbD^X_\PT$
for $k\geq -1$ be the operator
\begin{equation*}
  \mathcal{L}^\PT_k=\CT_k+ \mathrm{R}_k + (k+1)!\, \mathrm{R}_{-1}
  \mathsf{ch}_{k+1}(\mathsf{p})\, .
\end{equation*}
\end{definition}
\vspace{5pt}
Since $X$ is a nonsingular projective 3-fold with only $(p,p)$-cohomology,
Hirzebruch-Riemman-Roch implies 
$$\frac{c_1c_2}{24}=\mathsf{p} \in H^6(X)\, ,$$ 
where $\mathsf{p} \in H^6(X)$ in the point class. Hence,
for our paper,
we can write
\begin{equation}
 \label{f49907}
  \mathcal{L}^\PT_k=\CT_k+ \mathrm{R}_k + (k+1)!\, \mathrm{R}_{-1}
  \mathsf{ch}_{k+1}\Big(  \frac{c_1c_2}{24}   \Big)\, .
\end{equation}
The operators for more general varieties $X$ defined 
in \cite{Mor} specialize to \eqref{f49907} when all the cohomology
is $(p,p)$.

The operators \(\mathcal{L}^\PT_k\) 
impose constraints on descendent integrals in the theory of stable
pairs which are analogous to the Virasoro constraints of Gromov-Witten theory.
We formulate the stable pairs Virasoro constraints as follows.

\begin{conjecture}\label{vvirr}
\cite{OblomkovOkounkovPandharipande18} Let $X$ be a nonsingular projective 3-fold with only
$(p,p)$-cohomology, and let $\beta \in H_2(X,\mathbb{Z})$. 
 For all $k\geq -1$ and  \(D\in \bbD^X_{\PT}\), we have
  \[\Big\langle\calL^\PT_k(D) \Big\rangle_\beta^{X,\PT}=0\, .\]
\end{conjecture}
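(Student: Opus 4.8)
The plan is to deduce the stable pairs Virasoro constraints from the Virasoro constraints in Gromov--Witten theory by producing a descendent $\GW/\PT$ correspondence that intertwines the two families of Virasoro operators. On the Gromov--Witten side the constraints are available: they hold whenever the quantum cohomology is semisimple, and in particular for every nonsingular projective toric $3$-fold, with the expectation that a general $(p,p)$ target can be reduced to such cases by degeneration. Granting the Gromov--Witten input, the entire problem is transferred to a purely combinatorial compatibility of the correspondence with the operators $\mathcal{L}^{\PT}_k$, valid for all of $\bbD^X_{\PT}$.

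First I would realize the descendent $\GW/\PT$ correspondence as an explicit $\mathbb{Q}((q))$-linear transformation $\Cor$ on descendent insertions, carrying each Gromov--Witten descendent of a class $\gamma$ to a universal (generally infinite) combination of stable pairs descendents $\tch_k(\gamma)$ of the same class, under the substitution $q=-e^{iu}$, and intertwining the two integration brackets:
$$\Big\langle \Cor(D)\Big\rangle^{X,\PT}_\beta \;=\; \Big\langle D\Big\rangle^{X,\GW}_\beta\,.$$
Two features of $\Cor$ are what I would exploit. It must be invertible, so that every $D'\in\bbD^X_{\PT}$ equals $\Cor(D)$ for some Gromov--Witten insertion $D$; and it should be local, built from a target-independent vertex operator acting on the descendents of each cohomology class separately, so that its interaction with the Virasoro operators can be computed insertion by insertion.

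The heart of the argument is then the operator identity $\mathcal{L}^{\PT}_k \circ \Cor = \Cor \circ \mathcal{L}^{\GW}_k$, from which the conjecture follows immediately: writing any $D'=\Cor(D)$,
$$\big\langle \mathcal{L}^{\PT}_k(D')\big\rangle^{X,\PT}_\beta = \big\langle \Cor(\mathcal{L}^{\GW}_k D)\big\rangle^{X,\PT}_\beta = \big\langle \mathcal{L}^{\GW}_k D\big\rangle^{X,\GW}_\beta = 0\,.$$
I would verify the identity by matching the three constituents of $\mathcal{L}^{\PT}_k$ against their Gromov--Witten counterparts: the derivation $\mathrm{R}_k$ and the shift $(k+1)!\,\mathrm{R}_{-1}\ch_{k+1}(\mathsf{p})$ should conjugate to the classical (dilaton and divisor) part of the Gromov--Witten operator, while the quadratic operator $\CT_k$, written in renormalized form as in \eqref{ll99v} through $c_1$ and $c_1 c_2 = 24\,\mathsf{p}$, should match the diagonal ``splitting'' term. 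Because $\CT_k$ is expressed only through $c_1$ and $c_1 c_2$, it is the K\"unneth decomposition of $c_1\cdot\Delta$ that dictates how descendents of the identity and of divisor classes enter, and this is exactly where the full, non-stationary descendent algebra $\bbD^X_{\PT}$ is required rather than only the stationary sector.

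I expect the main obstacle to be the construction of $\Cor$ with enough explicitness for \emph{all} descendents, together with the conjugation formulas realizing the operator identity. The stationary sector, where $\gamma$ has positive degree, is governed by the capped descendent vertex and is tractable; the toric stationary statement in fact follows by localizing to the torus fixed points and invoking the $(\mathbb{C}^*)^3$-equivariant form of the correspondence. The genuinely difficult steps toward the full conjecture are (i) the non-stationary descendents $\ch_i(1)$ and $\ch_i(\mathsf{H})$, whose matching requires the complete $\GW/\PT$ correspondence together with control of the dilaton and divisor equations on both sides, and (ii) the passage from toric $3$-folds to arbitrary $(p,p)$ targets, which would demand a degeneration argument simultaneously compatible with the descendent insertions and with the Virasoro operators.
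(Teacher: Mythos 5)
Your overall strategy---transport the Gromov--Witten Virasoro constraints through a descendent correspondence that intertwines the two families of operators---is exactly the path the paper takes, and where your sketch can be made precise it reproduces the paper's proof of the \emph{stationary toric} case (Theorems \ref{thm:Vir-toric} and \ref{thm:main}). But the statement you are addressing is a conjecture that the paper does not prove in full, and your argument has genuine gaps at precisely the points left open. First, you assume $\Cor$ is defined and invertible on all of $\bbD^X_{\PT}$, so that every $D'$ can be written $\Cor(D)$. No such transformation is available: the paper states explicitly that closed formulas for the full $\GW/\PT$ descendent transformation of \cite{PPDC} are not known in general. Its $\Cor^\bullet$ runs in the opposite direction, from stable pairs to Gromov--Witten descendents, and only on the \emph{essential} subalgebra $\bbD^{X\bigstar}_{\PT}$; the reduction from $\bbD^{X+}_{\PT}$ to $\bbD^{X\bigstar}_{\PT}$ is done by hand via the evaluation rules for $\tch_0(\gamma)$, $\tch_1(\gamma)$ and $\tch_2(\delta)$ with $\delta\in H^2$, not by inverting anything. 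In particular, descendents of $1\in H^0$ are never reached (note that $\ch_i(\mathsf{H})$ for a divisor class is stationary and \emph{is} covered; it is $\ch_i(1)$ that is not), so even granting the Gromov--Witten constraints on all of $\bbD^X_{\GW}$, the method yields only the stationary sector of the conjecture.

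Second, the clean identity $\calL^\PT_k\circ\Cor=\Cor\circ\calL^\GW_k$ is false as written, and the true intertwining is more delicate. What the paper proves (Proposition \ref{PPP111} and Theorem \ref{thm:intertw}) is $\Cor^\bullet\circ\rmL^\PT_k=(\imath u)^{-k}\,\widetilde{\rmL}^\GW_k\circ\Cor^\bullet$ after the restrictions $\tau_{-2}(\pt)=1$ and $\tau_{-1}(\gamma)=0$, where $\widetilde{\rmL}^\GW_k$ is the Gromov--Witten operator with the $\CT^0_k$ term (the descendents of $1$) removed; that removal is justified by the string equation, the point-shift terms $(k+1)!\,\rmR_{-1}\ch_{k+1}(\pt)$ are matched separately through $\calL^\GW_k$, one must pass between the disconnected bracket $\langle\cdot\rangle^{X,\bullet}$ and the bracket $\langle\cdot\rangle^{X,\GW}$ by dividing out the invertible genus $\geq 2$ constant-maps factor (Proposition \ref{ggkk17}), and the correspondence itself only equates brackets after the normalization $(-q)^{-d_\beta/2}$ versus $(-\imath u)^{d_\beta}$ under $-q=e^{\imath u}$. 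Your sketch omits all of these corrections, and the naive identity would fail already at the $\tau_0(1)$ and negative-descendent terms. Finally, your Gromov--Witten input for non-toric $(p,p)$ targets---semisimplicity or an unspecified degeneration---is not a proof: the paper notes that the Virasoro constraints are known in very few non-semisimple cases, which is exactly why the statement remains a conjecture beyond the stationary toric theorem.
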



Our main result is a statement about {\em stationary descendents} for nonsingular projective {\em toric} 3-folds.
The subalgebra \(\bbD_\PT^{X+}\subset
\bbD^X_\PT\) of stationary descendents is generated{\footnote{Equivalently,
 $\bbD_\PT^{X+}$ is generated by $\big\{ \, \tch_i(\gamma)\, \big| \,  i\ge 0\, , \gamma\in H^{>0}(X,\mathbb{Q})
\,
\big\}$.}}
    by
$$\big\{ \, \ch_i(\gamma)\, \big| \,  i\ge 0\, , \gamma\in H^{>0}(X,\mathbb{Q})
\,
\big\}\, . $$
The operators \(\calL^\PT_k\) are easily seen to preserve $\bbD_\PT^{X+}$. Therefore,
the {\em stationary} \Vir constraints are well-defined.
We prove that the stationary \Vir constraints hold in the toric case.

\begin{theorem}\label{thm:Vir-toric}
  Let \(X\) be a nonsingular projective toric 3-fold, and let
\(\beta\in H_2(X,\mathbb{Z})\).  For all $k\geq -1$ and
\(D\in \bbD_\PT^{X+}\), we have
  \[\Big\langle\calL^\PT_k(D)\Big\rangle^{X,\PT}_\beta=0\, .\]
\end{theorem}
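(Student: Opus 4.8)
The plan is to deduce the stationary stable pairs Virasoro constraints from the Gromov--Witten Virasoro constraints, which are known to hold for nonsingular projective toric $3$-folds, by transporting them across the stationary $\GW/\PT$ descendent correspondence. The two technical inputs are, first, the explicit form of the stationary descendent correspondence (the central computation of the paper), and second, the compatibility of this correspondence with the Virasoro operators on each side. I take the toric Gromov--Witten constraints $\langle \calL^\GW_k(\,\cdot\,)\rangle^{X,\GW}_\beta = 0$ as given, where $\calL^\GW_k$ are the Eguchi--Hori--Xiong operators acting on the Gromov--Witten descendent algebra generated by the $\tau_i(\gamma)$.

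First I would package the stationary correspondence as a $\mathbb{Q}$-linear transformation $\Cor\colon \bbD_\PT^{X+}\to \bbD^{X+}_{\GW}$ such that, after the standard change of variables $-q = e^{iu}$ and multiplication by the universal prefactor dictated by the correspondence, one has $\langle D\rangle^{X,\PT}_\beta = \langle \Cor(D)\rangle^{X,\GW}_\beta$ for every stationary $D$. Working with the renormalized descendents $\tch_k$ rather than $\ch_k$ is essential here: formula \eqref{ll99v} shows that in the $\tch$ variables the operator $\CT_k$ collapses to a single quadratic term, and the correspondence itself is cleanest in these variables. Because the correspondence is universal (independent of $X$ and $\beta$), the intertwining can be checked at the level of operators on the abstract descendent algebra.

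The heart of the argument is to verify the intertwining relation $\Cor\circ\calL^\PT_k = \calL^\GW_k\circ\Cor$, which I would prove term by term against the decomposition $\calL^\PT_k = \CT_k + \rmR_k + (k+1)!\,\rmR_{-1}\ch_{k+1}(\mathsf{p})$. The derivation $\rmR_k$ is the most transparent: the weight $\prod_{n=0}^k(i+d-3+n)$ encodes the conformal dimension shift $d-3$, which should match exactly the dimension shift governing the Gromov--Witten Virasoro derivation once one accounts for the degree shift between $\psi$-descendents and $\ch$-descendents built into $\Cor$. The linear term $(k+1)!\,\rmR_{-1}\ch_{k+1}(\mathsf{p})$ should match the string/dilaton-type term on the Gromov--Witten side. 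The quadratic term $\CT_k$ is where the signs $(-1)^{d^L d^R}$ and the factorials $(a+d^L-3)!(b+d^R-3)!$ must be produced, and it is here that the precise shape of $\Cor$ enters.

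The main obstacle will be precisely this quadratic term $\CT_k$. Since $\Cor$ mixes descendents of different orders, a product $\ch_a\ch_b(c_1)$ is carried to a sum with cross-terms, and one must check that after the change of variables these cross-terms reassemble --- via a generating-function (Bessel-type) identity governing $\Cor$ --- into exactly the Gromov--Witten quadratic Virasoro term, with the correct factorials and Hodge-parity signs. This is the combinatorial core of the paper and is the step I expect to occupy the most effort. Granting the intertwining, the theorem follows immediately: for stationary $D$,
\[
\Big\langle \calL^\PT_k(D)\Big\rangle^{X,\PT}_\beta
= \Big\langle \Cor\big(\calL^\PT_k(D)\big)\Big\rangle^{X,\GW}_\beta
= \Big\langle \calL^\GW_k\big(\Cor(D)\big)\Big\rangle^{X,\GW}_\beta
= 0,
\]
the last equality being the toric Gromov--Witten Virasoro constraint, with the stationary truncation preserved throughout.
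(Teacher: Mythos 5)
Your route is the paper's route---Givental's toric Gromov--Witten Virasoro constraints, transported across the stationary $\GW/\PT$ correspondence by an intertwining property---but two steps you treat as bookkeeping are genuine gaps, and one of them fails as stated. The claim that the final chain of equalities runs ``with the stationary truncation preserved throughout'' is false for the Eguchi--Hori--Xiong operators: $\rmL^\GW_k$ does \emph{not} preserve $\bbD^{X+}_\GW$, because its quadratic part contains the summand $\CT^0_k = 2(k+1)!\,{:}\tau_0(1)\tau_{k-1}(\pt){:}$ involving descendents of the identity (and the dilaton shift likewise involves $t^0_1$). So one cannot apply ``the toric GW Virasoro constraint'' to $\calL^\GW_k(\Cor(D))$ while staying inside the stationary sector where the correspondence is valid. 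The paper has to manufacture a substitute: it splits off $\CT^0_k$, absorbs it via the string equation, defines modified operators $\calL^\GW_k$ and $\widetilde{\rmL}^\GW_k$ that do preserve $\bbD^{X+}_\GW$, and then proves two statements you are implicitly using: that the modified operators still annihilate brackets (Proposition~\ref{ggkk}), and that the vanishing transfers from the standard disconnected bracket $\langle\,,\rangle^{X,\bullet}$, where Givental's theorem lives, to the bracket $\langle\,,\rangle^{X,\GW}$ excluding collapsed components of genus at least $2$, which is the one entering the correspondence (Proposition~\ref{ggkk17}, using invertibility of the constant-maps factor $Z^X_{0,g\geq 2}$). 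The intertwining itself also cannot hold in the clean operator form $\Cor\circ\calL^\PT_k=\calL^\GW_k\circ\Cor$: what is true is an identity relating $\rmL^\PT_k$ to $\widetilde{\rmL}^\GW_k$, up to the factor $(\imath u)^{-k}$ and only after the evaluations $\tau_{-2}(\pt)=1$, $\tau_{-1}(\gamma)=0$; the point-descendent terms $(k+1)!\,\rmR_{-1}\ch_{k+1}(\pt)$ are matched separately through $\Cor^\bullet(\tch_{k+2}(\pt)D)=(\imath u)^{-k}\tau_k(\pt)\Cor^\bullet(D)$.

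Second, you posit $\Cor$ as a linear map on all of $\bbD^{X+}_\PT$, but the explicit correspondence formulas---and hence the entire intertwining computation---exist only on the essential subalgebra $\mathbb{D}^{X\bigstar}_{\PT}$, generated by the $\tch_i(\gamma)$ with $i\geq 3$, or $i=2$ and $\gamma\in H^{>2}(X)$. A necessary preliminary step, absent from your sketch, is to check directly that the constraints of the theorem are compatible with insertions of $\tch_0(\gamma)$, $\tch_1(\gamma)$, and $\tch_2(\delta)$ for divisor classes $\delta$, thereby reducing to $D\in\mathbb{D}^{X\bigstar}_{\PT}$. Relatedly, the cases $k=-1,0$ are not proved through the correspondence at all, but directly from $\ch_0(\gamma)=-\int_X\gamma$, the definition of $\rmR_{-1}$, virtual dimension constraints, and the divisor equation; the intertwining statement takes a genuinely different (and more delicate) form there. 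None of this changes the architecture you describe, but without the stationary-preserving modification of the Gromov--Witten operators your final display is not well-formed, and without the reduction to essential descendents the intertwining you intend to prove has no domain on which to live.
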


In the basic case of $\mathbb{P}^3$, Theorem \ref{thm:Vir-toric}
specializes to the Virasoro constraints for stable
pairs announced earlier in \cite{P} via \eqref{f49907}.
A table of data of
stable pairs descendent series for $\mathbb{P}^3$ is presented
in the Appendix. The Virasoro constraints are seen to  provide nontrivial    
relations.

\subsection{The Virasoro bracket}\label{virbra}
For $k\geq -1$, we introduce the operators
\begin{eqnarray*}
 \rmL^\PT_k&=&
-\frac{1}{2}\sum_{a+b=k+2}(-1)^{d^L d^R}(a+d^L-3)!(b+d^R-3)!\, 
\mathsf{ch}_a\mathsf{ch}_b(c_1)\\
& &+\frac{1}{24}\sum_{a+b=k}a!b!\, \mathsf{ch}_a\mathsf{ch}_b(c_1c_2)\\
           & &+ \mathsf{R}_k\, ,
\end{eqnarray*}
where the sum, as before,
is over ordered pairs $(a,b)$
with $a,b\geq 0$.

Our conventions with regard to the factorials
in the above definition of $\rmL_k^\PT$ differ slightly
from those of the definition of $\calL_k^\PT$. For $\rmL_k^\PT$,
{\em all terms with negative
    factorial vanish {\em except} for
    the term \((-1)!\, \mathsf{ch}_1(c_1)\)}.
  For example,
  we have \[\rmL^\PT_{-1}=\mathrm{R}_{-1}+(-1)!\, \mathsf{ch}_1(c_1)\mathsf{ch}_0(\mathsf{p})\, .\]
The new conventions will  play a role in the exceptional cases
in our analysis. We extend the action of $\rmR_k$ by
\[\rmR_k((-1)!\, \ch_1(c_1))= -(k-1)!\, \ch_{k+1}(c_1).\]
We view $(-1)!\ch_1(c_1)$ and
$$\rmR_{-1}((-1)!\ch_1(c_1))= -(-2)!\ch_0(c_1)$$
as formal symbols.

We define an equivalence relation
$\eql$ 
for operators $\mathcal{A}, \mathcal{B}:\mathbb{D}^X_\PT\rightarrow
\mathbb{D}^X_\PT$
by
$$ \mathcal{A}\,\eql\, \mathcal{B}  \ \ \ \ \leftrightarrow \ \ \ \
\langle \mathcal{A}(D) \rangle^{X,\PT}_\beta = \langle \mathcal{B}(D)\rangle^{X,\PT}_\beta \ \ \text{for all} \ \ D \in \mathbb{D}^X_\PT\ \ \text{and}\ \ 
\beta\in H_2(X,\mathbb{Z})\, .$$
Inside the bracket,
$\mathsf{ch}_0(\mathsf{p})$ acts as $-1$, and
$\mathsf{ch}_1(\gamma)$ acts as 0 for all $\gamma\in H^*(X)$.
Moreover, the formal symbols  $(-1)!\ch_1(c_1)$ and $(-2)!\ch_0(c_1)$
are {\em defined} to act as 0
inside the bracket.

Using the equivalence relation $\eql$,
we obtain the Virasoro bracket and the following
bracket with $\mathsf{ch}_k(\mathsf{p})$,
$$[\rmL^\PT_n,\rmL^\PT_k]\, \eql\, (k-n)\, \rmL^\PT_{n+k}\, ,\quad \ \ [\rmL^\PT_n,(k-1)!\, \mathsf{ch}_k(\mathsf{p})]\, \eql \,
(n+k)!\, \mathsf{ch}_{n+k}(\mathsf{p})\, .$$

The operators $\mathcal{L}^\PT_k$ are expressed in terms of  $\rmL^\PT_k$ by
$$\mathcal{L}^\PT_k \, \eql\,   \rmL^\PT_k
+ (k+1)!\, \rmL^\PT_{-1}\mathsf{ch}_{k+1}(\mathsf{p})\, .$$
The occurrences of  the negative factorial terms 
$(-1)!\ch_1(c_1)$ cancel on the right side.
The expressions $\rmL^\PT_k$ will play a role in the 
proof of Theorem \ref{thm:Vir-toric}.

\subsection{Path of the proof}
Our proof of Theorem \ref{thm:Vir-toric}
relies upon two central results. The first is 
the  Virasoro conjecture in Gromov-Witten theory
which has been proven for nonsingular projective toric varieties \cite{Giv}.
We refer the reader to the
extensive literature on the subject \cite{EHX, GetPan, Giv,
  Pandharipande03,OPVir,Tel}.
The second is 
 the stationary $\GW$/$\PT$ correspondence of \cite{part1,PPstat,PP2}
which was cast in terms of vertex operators in 
\cite{OblomkovOkounkovPandharipande18}
and has been proven for nonsingular projective toric 3-folds.
We show the  stationary $\GW$/$\PT$ correspondence intertwines the \Vir constraints of the two theories. Along the way, 
we derive a more explicit form for the stationary $\GW$/$\PT$ correspondence.
Our proof of Theorem \ref{thm:Vir-toric} yields the
following stronger statement.

\begin{theorem}\label{thm:main}
Let 
 \(X\) be a nonsingular projective 3-fold with only $(p,p)$-cohomology for which 
the following two properties are satisfied:
\begin{enumerate}
\item[(i)]
The stationary Virasoro constraints for the Gromov-Witten theory of $X$ hold.
\item[(ii)]
The stationary $\GW$/ $\PT$ correspondence holds. 
\end{enumerate}
Then, the stationary
Virasoro
constraints for the stable pairs theory of $X$ hold.
\end{theorem}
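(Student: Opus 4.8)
The plan is to show that the stationary $\GW/\PT$ descendent correspondence is equivariant for the two Virasoro actions, so that the stable pairs constraints are transported directly from the Gromov--Witten constraints supplied by hypothesis~(i). The stationary correspondence of \cite{part1,PPstat,PP2,OblomkovOkounkovPandharipande18}, once inverted, furnishes a universal (that is, $\beta$-independent) transformation $\Cor$ expressing each stationary stable pairs descendent series as a combination of Gromov--Witten descendent series under the change of variables $q=-e^{iu}$ relating the stable pairs variable $q$ to the Gromov--Witten variable $u$. Writing this dictionary as $\langle D\rangle^{X,\PT}_\beta=\langle\Cor(D)\rangle^{X,\GW}_\beta$, the theorem reduces to the single operator identity
$$\big\langle\Cor\big(\calL^\PT_k(D)\big)\big\rangle^{X,\GW}_\beta\ =\ \big\langle\calL^\GW_k\big(\Cor(D)\big)\big\rangle^{X,\GW}_\beta$$
for all $k\geq-1$ and all $D\in\bbD^{X+}_\PT$. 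Granting it, hypothesis~(i) gives $\langle\calL^\GW_k(\Cor(D))\rangle^{X,\GW}_\beta=0$ (note that $\Cor$ preserves stationarity, so $\Cor(D)$ lies where the Gromov--Witten constraints apply), and hence $\langle\calL^\PT_k(D)\rangle^{X,\PT}_\beta=0$, which is Theorem~\ref{thm:Vir-toric}.

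To prove the intertwining I would first record the explicit closed form of the stationary $\GW/\PT$ correspondence derived along the way, so that the action of $\Cor$ on each generator $\tch_i(\gamma)$ is known. The renormalized descendents $\tch_k$ are the correct variables here, both because they collapse $\CT_k$ to the single quadratic sum~\eqref{ll99v} and because the correspondence is triangular in them. It is cleanest to verify equivariance first for the bracket operators, establishing
$$\big\langle\Cor\big(\rmL^\PT_k(D)\big)\big\rangle^{X,\GW}_\beta\ =\ \big\langle\rmL^\GW_k\big(\Cor(D)\big)\big\rangle^{X,\GW}_\beta\, ,$$
and then to pass to $\calL^\PT_k$ through the presentation $\calL^\PT_k\eql\rmL^\PT_k+(k+1)!\,\rmL^\PT_{-1}\,\ch_{k+1}(\mathsf{p})$ together with its Gromov--Witten analogue. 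The Virasoro commutation relations
$$[\rmL^\PT_n,\rmL^\PT_k]\ \eql\ (k-n)\,\rmL^\PT_{n+k}\, ,\qquad [\rmL^\PT_n,(k-1)!\,\ch_k(\mathsf{p})]\ \eql\ (n+k)!\,\ch_{n+k}(\mathsf{p})$$
recorded above, together with their Gromov--Witten counterparts, then reduce the verification to a generating case, since the higher operators are produced by iterated bracketing. The quadratic diagonal terms $\ch_a\ch_b(c_1)$ and the point insertions $\ch_{k+1}(\mathsf{p})$ transform in a controlled manner because $\Cor$ respects the K\"unneth decomposition factor by factor, acting on $\gamma^L_i$ and $\gamma^R_i$ separately.

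The main obstacle is the behaviour of the degree-raising derivation $\rmR_k$ under $\Cor$. Because the correspondence mixes descendents of different orders, $\Cor$ does not commute with $\rmR_k$, and the whole argument hinges on showing that the discrepancy $\Cor\circ\rmR_k-\rmR_k\circ\Cor$ is absorbed exactly by the dimension-shifted weights: the factors $\prod_{n=0}^{k}(i+d-3+n)$ appearing in $\rmR_k$ and the factorials $(a+d^L-3)!\,(b+d^R-3)!$ appearing in the quadratic terms must be reproduced, term by term, by the combinatorial coefficients of the correspondence. I expect this matching of the $(d-3)$ shifts to be the heart of the proof and the point at which the explicit form of the correspondence is indispensable; it is also where the restriction to the stationary subalgebra $\bbD^{X+}_\PT$ is essential, since the descendent correspondence is controlled only for positive-degree insertions $\gamma\in H^{>0}(X)$. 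Once the commutator of $\Cor$ with $\rmR_k$ is identified with the difference between the two quadratic-and-point parts, the bracket identity above follows, and with it Theorem~\ref{thm:main}.
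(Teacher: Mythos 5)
Your overall architecture coincides with the paper's: reduce Theorem \ref{thm:main} to an intertwining identity between $\Cor^\bullet\circ\calL^\PT_k$ and $\calL^\GW_k\circ\Cor^\bullet$, note that $\Cor^\bullet$ preserves stationarity, and then conclude from hypotheses (i) and (ii). You also correctly locate the heart of the matter in the failure of $\Cor^\bullet$ to commute with $\rmR_k$, with the discrepancy to be absorbed by the quadratic diagonal and point terms. The genuine gap is in how you propose to \emph{prove} the intertwining: the reduction ``to a generating case by iterated bracketing'' does not work, and this is exactly why the paper instead proves the intertwining for every $k$ by direct computation, organized by the bumping filtration (Propositions \ref{prop:CTT}, \ref{prop:shiftpt}, \ref{prop:no-bump}, \ref{prop:double-bump1}, \ref{prop:double-bump2}, \ref{prop:triple-bump}, assembled in Proposition \ref{PPP111} and Theorem \ref{thm:intertw}).

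There are three concrete obstructions to the bracketing shortcut. First, the stable pairs relations \eqref{ppp999} hold only in the sense of the equivalence $\eql$, i.e.\ after evaluation against all stable pairs series; they are not operator identities on $\bbD^X_\PT$. To pass an $\eql$-identity through $\Cor^\bullet$ and land in an identity of Gromov--Witten brackets, you would already need to know that the correspondence converts PT brackets to GW brackets \emph{for the elements involved} -- but those elements lie outside $\bbD^{X\bigstar}_\PT$, where the correspondence is defined and proven, so the argument is circular. Second, there is a domain problem: iterated operators $\rmL^\PT_n\rmL^\PT_k(D)$ produce monomials that are quadratic and higher in the exceptional insertions $\tch_0(\gamma)$, $\tch_1(\gamma)$, $\tch_2(\delta)$ and $(-1)!\,\ch_1(c_1)$, whereas the transformation $\Cor^\bullet$ is only extended to terms at most linear in these insertions (see \eqref{fpp23} and \eqref{eq:except-bump}); the expressions arising in your induction are simply not covered by the correspondence. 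Third, the structure constants do not match: conjugation by a fixed linear map preserves bracket relations exactly, and the scaling $(\imath u)^{-k}$ in Theorem \ref{thm:intertw} is multiplicative in $k$, yet \eqref{ppp999} reads $[\rmL^\PT_n,\rmL^\PT_k]\eql(k-n)\rmL^\PT_{n+k}$ while \eqref{qqq999} reads $[\rmL^\GW_n,\rmL^\GW_k]=(n-k)\rmL^\GW_{n+k}$; moreover, the operators actually appearing in the intertwining are the modified $\widetilde{\rmL}^\GW_k$ (with the descendents of $1$ removed), whose brackets are not computed in the paper and would require a separate argument. Without a valid generating-case reduction, your proposal contains no proof of the intertwining identity, which is precisely the nontrivial content that the paper supplies through its case-by-case residue and filtration computations.
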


A challenge for the subject is to prove the
Virasoro constraints for stable pairs directly using the
geometry of the moduli of sheaves. New ideas will almost
certainly be required.

\subsection{Gromov-Witten theory}\label{p3f}
Let $X$ be a nonsingular projective 3-fold.
Gromov-Witten theory is defined via integration over the moduli
space of stable maps.

Let $C$ be a possibly disconnected curve with at worst nodal singularities.
The genus of $C$ is defined by $1-\chi(\OO_C)$. 
Let $\overline{M}'_{g,m}(X,\beta)$ denote the moduli space of stable maps
with possibly {disconnected} domain
curves $C$ of genus $g$ with {\em no} collapsed connected components of genus greater or equal to \(2\).
The latter condition{\footnote{The exclusion here of
    collapsed connected components of genus greater or equal to 2 matches
    the conventions of \cite{OblomkovOkounkovPandharipande18}.
    The definition of  $\overline{M}'_{g,m}(X,\beta)$ differs slightly
    from the definitions of \cite{PPDC,PPQ} where no collapsed
    connected components are permitted. The difference is minor, see
    Section 3 of  \cite{OblomkovOkounkovPandharipande18} for
    a discussion.}}
requires 
 each non-rational and non-elliptic connected component of $C$ to represent
 a nonzero class in $H_2(X,{\mathbb Z})$.

Let 
$$\text{ev}_i: \overline{M}'_{g,m}(X,\beta) \rightarrow X\, ,$$
$$ \LL_i \rightarrow \overline{M}'_{g,m}(X,\beta)$$
denote the evaluation maps and the cotangent line bundles associated to
the marked points.
Let $\gamma_1, \ldots, \gamma_m\in H^*(X)$, and
let $$\psi_i = c_1(\LL_i) \in H^2(\overline{M}'_{g,m}(X,\beta))\, .$$
The {\em descendent insertions}, denoted by $\tau_k(\gamma)$ for $k\geq 0$,
correspond 
to classes $\psi_i^k \text{ev}_i^*(\gamma)$ on the moduli space
of stable maps. 
Let
$$\Big\langle \tau_{k_1}(\gamma_{1}) \cdots
\tau_{k_m}(\gamma_{m})\Big\rangle^{X,\GW}_{g,\beta} = \int_{[\overline{M}'_{g,m}(X,\beta)]^{vir}} 
\prod_{i=1}^m \psi_i^{k_i} \text{ev}_i^*(\gamma_{_i})$$
denote the descendent
Gromov-Witten invariants.
The associated generating series is defined 
by
\begin{equation}
\label{abc}
\Big\langle   \tau_{k_1}(\gamma_{1}) \cdots
\tau_{k_m}(\gamma_{m})\Big\rangle^{X,\GW}_{\beta}= 
\sum_{g\in{\mathbb Z}} \Big \langle \prod_{i=1}^m
\tau_{k_i}(\gamma_{i}) \Big \rangle^{X,\GW}_{g,\beta} \ u^{2g-2}.
\end{equation}
Since the domain components must map nontrivially, an elementary
argument shows the genus $g$ in the  sum \eqref{abc} is bounded from below. 
 Foundational aspects of the theory
are treated, for example, in \cite{BehFan,FPn, LiTian}.

Using the above definitions, the string equation{\footnote{The
   standard correction term for the string equation occurs here  since
    we allow collapsed connected components of genus 0 in our definition of
    the Gromov-Witten descendent series.}} is easily checked:
\begin{equation}\label{sss}
\Big \langle \tau_{0}(1) \prod_{i=1}^m \tau_{k_i}(\gamma_i) \Big
\rangle^{X,\GW}_\beta=  
\Big\langle   \sum_{j=1}^m \prod_{i=1}^m \tau_{k_i-\delta_{i-j}}(\gamma_i)  \Big\rangle^{X,\GW}_\beta\, + \text{collapsed contributions}.
\end{equation}

The Gromov-Witten descendent insertions $\tau_k(\gamma)$ in \eqref{abc} are
defined for $k\geq 0$. We include the nonstandard descendent insertions
$\tau_{-2}(\gamma)$ and $\tau_{-1}(\gamma)$ by the rule:
\begin{equation}\label{ActionVacuum}
\Big \langle \tau_{k}(\gamma) \prod_{i=1}^m \tau_{k_i}(\gamma_i) \Big
\rangle^{X,\GW}_\beta= \frac{\delta_{k+2}}{u^2}\int_X\gamma\ \, 
\cdot\ \, \Big\langle    \prod_{i=1}^m \tau_{k_i}(\gamma_i)
\Big\rangle^{X,\GW}_\beta\, , \ \ \text{for $k<0$.}
\end{equation}
We impose  
Heisenberg relations \eqref{eq:Heis} on 
 the operators \(\tau_{k}(\gamma)\):
\begin{equation}\label{pp23pp}
  [\tau_k(\alpha),\tau_l(\beta)]=(-1)^{k}\frac{\delta_{k+l+1}}{u^2}\int_X\alpha
  \cdot\beta\, .
\end{equation}
In particular, the evaluation \eqref{ActionVacuum} applies
only after commuting the negative descendents to the left.

Assume now that $X$ has only
$(p,p)$-cohomology.
Let \(\bbD^X_{\GW}\)
be the commutative $\mathbb{Q}$-algebra 
 with generators
$$\big\{ \, \tau_i(\gamma)\, \big| \,  i\ge 0\, , \gamma\in H^*(X)\,
\big\} $$
subject to the natural relations
\begin{eqnarray*}
\tau_i(\lambda\cdot \gamma) & = & \lambda\, \tau_i(\gamma)\, , \\
 \tau_i(\gamma+\widehat{\gamma})& =&\tau_i(\gamma)+\tau_i(\widehat{\gamma})\, 
\end{eqnarray*}
for $\lambda\in \mathbb{Q}$ and $\gamma,\widehat{\gamma} \in H^*(X)$.
The subalgebra \(\bbD_\GW^{X+}\subset
\bbD^X_\GW\) of stationary descendents is generated by
$$\big\{ \, \tau_i(\gamma)\, \big| \,  i\ge 0\, , \gamma\in H^{>0}(X,\mathbb{Q})
\,
\big\}\, . $$

We will use Getzler's renormalization  \(\fra_k\)
of the Gromov-Witten descendents{\footnote{We use $\imath$ for the
square root of $-1$. The genus variable $u$ will
usually occur together with $\imath$.}}:
\begin{equation}\label{eq:Getzler}
  \sum_{n=-\infty}^\infty z^n\tau_n=\mathsf{Z}^0+\sum_{n> 0}\frac{(\imath
    uz)^{n-1}}{(1+zc_1)_n}\mathfrak{a}_n+\frac{1}{c_1}\sum_{n<0}\frac{(\imath
    uz)^{n-1}}{(1+zc_1)_n}
\mathfrak{a}_n\, ,
\end{equation}
\begin{equation*}
  \mathsf{Z}^0=\frac{z^{-2}u^{-2}}{\mathcal{S}\left(\frac{zu}{\theta}\right)}
      -z^{-2}u^{-2},  
\end{equation*}
where we use standard notation for the Pochhammer symbol
$$(a)_n=\frac{\Gamma(a+n)}{\Gamma(a)}\, .$$
For example\footnote{The constant term  
  $\frac{1}{24}\int_X \gamma c_2$ in the formula
  does not contribute unless $\gamma \in H^2(X)$.},
\begin{eqnarray}\label{eq:tau-01}
  \tau_0(\gamma) & = & \mathfrak{a}_{1}(\gamma)+\frac{1}{24}
                       \int_X \gamma c_2\, , \\
\tau_1(\gamma) & = & \frac{\imath u}{2}\mathfrak{a}_{2}(\gamma)-\mathfrak{a}_1(\gamma\cdot c_1)\, .
\end{eqnarray}
For $k\geq 2$ and  $\gamma\in H^{>0}(X)$, we have the general formula
\begin{multline}
 \label{eq:tau2a} \tau_k(\gamma)=
\frac{(\imath u)^k}{(k+1)!}\mathfrak{a}_{k+1}(\gamma)-\frac{(\imath u)^{k-1}}{k!}\left(\sum_{i=1}^k\frac1i\right)\mathfrak{a}_k(\gamma\cdot c_1) \\
  +\frac{(\imath u)^{k-2}}{(k-1)!}\left(\sum_{i=1}^{k-1}\frac{1}{i^2}+\sum_{1\le i<j\le k-1}\frac{1}{ij}\right)\mathfrak{a}_{k-1}(\gamma\cdot c_1^2) \, .
\end{multline}


\subsection{The $\GW/\PT$ correspondence for
essential descendents}
\label{sec:stat-gwpt-corr}

The subalgebra
$$\mathbb{D}^{X\bigstar}_{\PT}\subset \mathbb{D}^{X+}_{\PT}$$ of {\em essential
descendents} is generated by
  $$\big\{ \, \tch_i(\gamma)\,  | \, (i\geq 3, \gamma\in H^{>0}(X,\mathbb{Q}))\ 
   \text{or}\  (i=2, \gamma\in H^{>2}(X,\mathbb{Q}))\, \big\}\, .$$
   While closed formulas for the
   full $\GW$/$\PT$ descendent transformation of \cite{PPDC} are
   not known in full generality, the stationary theory is much better understood
   \cite{OblomkovOkounkovPandharipande18}.{\footnote{See \cite{MNOP1,MNOP2}
     for an earlier view of descendents and descendent transformations.}}
   The transformation takes the simplest form when
   restricted to essential descendents.

   The $\GW$/$\PT$ transformation restricted to the
   essential descendents is a linear map
 $$\Cor^\bullet:\mathbb{D}^{X\bigstar}_{\PT}\rightarrow\mathbb{D}^{X}_{\GW}$$
satisfying
$$\Cor^\bullet(1) =1$$
and
 is defined on monomials by
\[\Cor^\bullet\Big(\tch_{k_1}(\gamma_1)\dots\tch_{k_m}(\gamma_m)\Big)=\sum_{P \mbox{ set partition of }\{1,\dots,m\}}\prod_{S\in P}\Cor^\circ\Big(\prod_{i\in S}\tch_{k_i}
  (\gamma_i)\Big)\, .\]
The operations \(\Cor^\circ\) on $\mathbb{D}^{X\bigstar}_{\PT}$ are
\begin{multline}
  \label{eq:decay-intro}
\Cor^\circ\Big(\tch_{k_1+2}(\gamma)\Big)=\frac{1}{(k_1+1)!}\mathfrak{a}_{k_1+1}(\gamma)+\frac{(\imath u)^{-1}}{k_1!}\sum_{|\mu|=k_1-1}\frac{\fra_{\mu_1}\fra_{\mu_2}(\gamma\cdot c_1)}
{\text{Aut}(\mu)}\\  +\frac{(\imath u)^{-2}}{k_1!}\sum_{|\mu|=k_1-2}\frac{\fra_{\mu_1}\fra_{\mu_2}(\gamma\cdot c_1^2)}{\text{Aut}(\mu)}+
  \frac{(\imath u)^{-2}}{(k_1-1)!}\sum_{|\mu|=k_1-3}\frac{\fra_{\mu_1}\fra_{\mu_2}\fra_{\mu_3}(\gamma\cdot c_1^2)}{\text{Aut}(\mu)}\, ,
\end{multline}

 \begin{multline}
    \label{eq:2bump-intro}
    \Cor^\circ\Big(\tch_{k_1+2}(\gamma)\tch_{k_2+2}(\gamma')\Big)
=-\frac{(\imath u)^{-1}}{k_1!k_2!}\fra_{k_1+k_2}(\gamma\gamma') 
-\frac{(\imath u)^{-2}}{k_1!k_2!}\fra_{k_1+k_2-1}(\gamma\gamma'\cdot c_1)
\\-\frac{(\imath u)^{-2}}{k_1!k_2!}
    \sum_{|\mu|=k_1+k_2-2}\max(\max(k_1,k_2),\max(\mu_1+1,\mu_2+1))\frac{\fra_{\mu_1}\fra_{\mu_2}}{\text{Aut}(\mu)}(\gamma\gamma'\cdot c_1)\, ,
  \end{multline}

\begin{multline}
  \label{eq:triple-bump-intro}
  \Cor^\circ\Big(\tch_{k_1+2}(\gamma)\tch_{k_2+2}(\gamma')\tch_{k_3+2}(\gamma'')\Big)
=
\frac{(\imath u)^{-2}|k|}{k_1!k_2!k_3!}\fra_{|k|-1}(\gamma\gamma'\gamma'')\,
, \quad |k|=k_1+k_2+k_3\, . \end{multline}
The above sums are over {\em partitions} of $\mu$ of length $2$ or $3$.
The parts of  
$\mu$ are {\em positive} integers, and we  always write
$$\mu=(\mu_1,\mu_2)\ \ \ \text{and} \ \ \ \mu=(\mu_1,\mu_2,\mu_3)$$ with 
weakly decreasing parts.
In equations \eqref{eq:decay-intro}-\eqref{eq:triple-bump-intro}, 
we have $\ k_i\ge 0$, 
and all occurrences of
$\fra_0$ and \(\fra_{-1}\) are set to $0$.


The above formulas for the $\GW$/$\PT$ descendent correspondence 
are proven here from the vertex operator formulas of  
 \cite{OblomkovOkounkovPandharipande18} by a direct
 evaluation of the leading terms. In the toric case, we
 have the following explicit correspondence
 statement.{\footnote{A straightforward
    exercise using our new conventions is to show the abstract correspondence
    of Theorem \ref{thm:cor-main} is a consequence of
    \cite[Theorem 4]{PPDC}. The novelty of Theorem \ref{thm:cor-main}
  is the closed formula for the transformation.}

\begin{theorem}\label{thm:cor-main}  
Let $X$ be a nonsingular projective toric 3-fold. 
Let
$$ \prod_{i=1}^m \tch_{k_i}(\gamma_i) \in  \mathbb{D}^{X\bigstar}_{\PT}\, .$$ 
Let $\beta\in H_2(X,\mathbb{Z})$ with
$d_\beta = \int_\beta c_1(X)$.
Then, the
$\GW/\PT$ correspondence defined by formulas \eqref{eq:decay-intro}-\eqref{eq:triple-bump-intro} holds:
$$ (-q)^{-d_\beta/2}\,  \Big \langle \prod_{i=1}^m \tch_{k_i}(\gamma_i)\Big\rangle_\beta^{X,\PT}=
(-\imath u)^{d_\beta}\, \Big\langle\Cor^\bullet\Big(\prod_{i=1}^m\tch_{k_i}(\gamma_i)\Big)\Big\rangle^{X,\GW}_\beta\,
,$$
 after the change of variables $-q=e^{\imath u}$.
\end{theorem}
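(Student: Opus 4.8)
The plan is to establish the stated identity in two stages, matching the two footnotes to the theorem: first fix the \emph{form} of the correspondence abstractly, then pin down its \emph{explicit} shape from the vertex operator description.

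For the abstract form, I would invoke the general stationary $\GW/\PT$ descendent correspondence of \cite[Theorem 4]{PPDC}, valid for all nonsingular projective toric 3-folds. Rewritten in the renormalized variables $\tch_k(\gamma)$ and $\fra_k(\gamma)$ used here --- the routine translation between conventions --- it yields, after $-q=e^{\imath u}$, an identity
$$(-q)^{-d_\beta/2}\,\Big\langle \prod_{i=1}^m \tch_{k_i}(\gamma_i)\Big\rangle_\beta^{X,\PT}=(-\imath u)^{d_\beta}\,\Big\langle \mathcal{K}\Big(\prod_{i=1}^m \tch_{k_i}(\gamma_i)\Big)\Big\rangle^{X,\GW}_\beta$$
for \emph{some} universal linear transformation $\mathcal{K}$ depending only on the descendent data and the Chern classes of $X$. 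The content of the theorem is then the assertion that, restricted to essential descendents, $\mathcal{K}$ coincides with the explicit transform $\Cor^\bullet$ built from \eqref{eq:decay-intro}--\eqref{eq:triple-bump-intro}.

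To identify $\mathcal{K}$ I would work directly from the vertex operator formulas of \cite{OblomkovOkounkovPandharipande18}. In the toric setting both descendent series localize to the torus-fixed loci and are assembled, via the TQFT gluing over the toric graph, from equivariant vertex contributions that act as vertex operators on the Heisenberg--Fock space generated by the $\fra_k$. Getzler's renormalization \eqref{eq:Getzler} is exactly what converts the Gromov--Witten descendents into these Heisenberg operators, and the passage $\ch_k\rightsquigarrow\tch_k$ plays the analogous role on the stable pairs side; after both substitutions the correspondence of \cite{OblomkovOkounkovPandharipande18} becomes an explicit operator identity on the Fock space. The connected transforms $\Cor^\circ$ are then read off by a direct evaluation of its leading terms: the one-leg vertex produces \eqref{eq:decay-intro}, the two-leg vertex produces \eqref{eq:2bump-intro}, and the three-leg vertex produces \eqref{eq:triple-bump-intro}. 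The assembly $\Cor^\bullet=\sum_{P}\prod_{S\in P}\Cor^\circ$ into the full transform is the standard moment--cumulant (disconnected versus connected) relation inherent in the vertex formalism. Finally I would check that for essential descendents ($i\geq 3$ with $\gamma\in H^{>0}$, or $i=2$ with $\gamma\in H^{>2}$) the connected operation $\Cor^\circ$ vanishes on blocks of four or more factors: the essential constraints supply enough descendent and cohomological weight that a connected term joining $\geq 4$ insertions is forced to vanish at leading order, so the set-partition sum truncates at blocks of size $\leq 3$. This truncation is precisely the simplification that makes the essential-descendent correspondence closed.

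The main obstacle is the explicit leading-coefficient extraction of the previous paragraph, and above all the verification of the nested combinatorial coefficient $\max(\max(k_1,k_2),\max(\mu_1+1,\mu_2+1))$ in \eqref{eq:2bump-intro}. This demands a careful analysis of the vertex operator expansion of \cite{OblomkovOkounkovPandharipande18}, tracking the $\mathcal{S}$-function and the Pochhammer symbols appearing in \eqref{eq:Getzler} together with the genus grading, and confirming that every higher-order and higher-leg correction collapses to the stated polynomial expression. Establishing the vanishing of the $\geq 4$-point connected operations cleanly, rather than by a heuristic degree count, is the other delicate point.
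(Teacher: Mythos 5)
Your proposal follows essentially the same path as the paper: there, Theorem \ref{thm:cor-main} is deduced from the non-equivariant vertex-operator correspondence of \cite{OblomkovOkounkovPandharipande18} (Theorem \ref{thm:noneq}) together with Proposition \ref{thm:H-Cor}, whose proof in Section \ref{sec:residue-computation} is precisely the leading-term (residue) evaluation of the vertex operators that you describe, assembled via the same set-partition (connected/disconnected) structure. One remark: the truncation at connected blocks of size at most three, which you flag as a delicate point, is in fact immediate --- a block $S$ with $|S|\geq 4$ carries the cohomology class $\prod_{i\in S}\gamma_i$ with every $\gamma_i\in H^{\geq 2}(X)$, and such a product vanishes identically on a $3$-fold --- and the preliminary appeal to \cite[Theorem 4]{PPDC} is likewise unnecessary, since the paper works directly from \cite{OblomkovOkounkovPandharipande18} and mentions the \cite{PPDC} route only in a footnote as an alternative abstract derivation.
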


As direct consequence of the formulas  \eqref{eq:decay-intro}-\eqref{eq:triple-bump-intro}, the correspondence taken essential descendents on the
stable pairs side to stationary descendents on the stable pairs side.

\begin{proposition}\label{666}
  Let $D\in \mathbb{D}^{X\bigstar}_{\PT}$. 
  Under the $\GW/\PT$ transformation, we have
  $$\Cor^\bullet(D)\in \mathbb{D}^{X+}_{\GW}\, .$$
\end{proposition}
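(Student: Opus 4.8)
The plan is to reduce the claim to a term-by-term inspection of the three explicit formulas \eqref{eq:decay-intro}--\eqref{eq:triple-bump-intro}, using that $\Cor^\bullet$ is built multiplicatively from $\Cor^\circ$ and that the stationary subalgebra is closed under products. First I would record the elementary but necessary reformulation of the target: $\mathbb{D}^{X+}_{\GW}$ is the same as the $\mathbb{Q}$-subalgebra generated by the renormalized descendents $\fra_n(\gamma)$ with $n\ge 1$ and $\gamma\in H^{>0}(X)$. This follows from Getzler's relations \eqref{eq:tau-01}--\eqref{eq:tau2a}, in which each $\tau_k(\gamma)$ with $\gamma\in H^{>0}(X)$ is expressed through $\fra$'s applied to $\gamma$, $\gamma c_1$, $\gamma c_1^2$, all of which again lie in $H^{>0}(X)$; these relations are triangular in $k$ and hence invertible, and the only constant that appears, $\tfrac{1}{24}\int_X\gamma c_2$ in $\tau_0(\gamma)$, lies in $\mathbb{Q}\subset \mathbb{D}^{X+}_{\GW}$ and causes no trouble.

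With this reformulation in hand, I would observe that $\Cor^\bullet$ evaluated on a monomial is a sum over set partitions $P$ of products $\prod_{S\in P}\Cor^\circ\big(\prod_{i\in S}\tch_{k_i}(\gamma_i)\big)$. Since a product of elements of $\mathbb{D}^{X+}_{\GW}$ again lies in $\mathbb{D}^{X+}_{\GW}$, it suffices to show that each block contribution $\Cor^\circ\big(\prod_{i\in S}\tch_{k_i}(\gamma_i)\big)$ is stationary. Only blocks of size $1$, $2$, $3$ carry a nonzero $\Cor^\circ$, governed by \eqref{eq:decay-intro}, \eqref{eq:2bump-intro}, \eqref{eq:triple-bump-intro} respectively, so the verification is a finite inspection. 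The key point is that in each of these formulas every operator $\fra_n$ is applied to a class of the shape $(\gamma_{i_1}\cdots\gamma_{i_r})\cdot c_1^{\,e}$ with $e\in\{0,1,2\}$ and $r\ge 1$, and with $n\ge 1$ since $\fra_0$ and $\fra_{-1}$ are set to $0$. Because the descendents are essential, each input class $\gamma_i$ lies in $H^{>0}(X)$, so any product $\gamma_{i_1}\cdots\gamma_{i_r}$ already sits in $H^{>0}(X)$, and multiplication by $c_1^{\,e}$ keeps it there. Hence every monomial produced is a product of generators $\fra_n(\delta)$ with $n\ge 1$ and $\delta\in H^{>0}(X)$, and therefore lies in $\mathbb{D}^{X+}_{\GW}$; the scalar prefactors $(\imath u)^{-1},(\imath u)^{-2}$ are Laurent coefficients in $u$ and do not affect membership in the stationary subalgebra.

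This is genuinely a direct consequence of the formulas rather than a deep argument, so there is no real obstacle; the one thing to be careful about is that the essential hypothesis is precisely what is being used. It is exactly this hypothesis that rules out any occurrence of $\fra_n(1)$ with $1\in H^0(X)$ and of the constant term appearing in $\tau_0(\gamma)$. Since the correspondence \eqref{eq:decay-intro}--\eqref{eq:triple-bump-intro} is written directly in the $\fra$-basis rather than through the $\tau$'s, such nonstationary pieces are manifestly absent, which completes the verification.
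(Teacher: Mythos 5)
Your proof is correct and takes essentially the same route as the paper, which records Proposition \ref{666} as a direct consequence of the formulas \eqref{eq:decay-intro}--\eqref{eq:triple-bump-intro}; your write-up simply makes that inspection explicit (reformulation of $\mathbb{D}^{X+}_{\GW}$ in the $\fra$-basis via the triangular relations \eqref{eq:tau-01}--\eqref{eq:tau2a}, reduction to blocks through the set-partition structure of $\Cor^\bullet$, and term-by-term verification). The one point stated loosely is that symbols such as $\fra_{\mu_1}\fra_{\mu_2}(\delta)$ carry the K\"unneth/coproduct convention, so each $\fra_{\mu_j}$ is applied to a K\"unneth component of $\Delta_*\delta$ rather than to $\delta$ itself; this is harmless, since for $\delta\in H^{>0}(X)$ all components of $\Delta_*\delta$ lie in $H^{>0}(X)\otimes H^{>0}(X)$ for degree reasons ($H^{>6}(X)=0$), so your conclusion stands unchanged.
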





\subsection{Plan of the paper}
\label{sec:strategy-proof}

The key to our proof of Theorem~\ref{thm:Vir-toric} is an 
intertwining property of \(\Cor^\bullet\) with respect to Virasoro operators
for stable pairs and the Virasoro operators for stable maps. Via the intertwining property,
Theorem~\ref{thm:Vir-toric} is a consequence of the stationary $\GW/\PT$
correspondence of Theorem~\ref{thm:cor-main} and the Virasoro constraints
for  
 the Gromov-Witten theory of toric 3-folds.

 
The algebra \(\bbD^X_\PT \) carries a {\it bumping filtration}{\footnote{The bumping filtration is a filtration of vector spaces.}} 
\begin{equation}\label{nn33}
\bbD^0_{\PT}\subset\bbD^1_\PT\subset
\bbD^2_\PT \subset \bbD^3_\PT \subset
\dots \subset \bbD_\PT^X \, ,
\end{equation} 
where \(\bbD^k_\PT\) is
spanned by the monomials{\footnote{Via the empty monomial $(m=0)$,
$\bbD^0_\PT$ is spanned by the unit $1$.}}
 $$\prod_{i=1}^m\tch_{k_i}(\gamma_i)$$
for which \(\gamma_{s_1}\cdots\gamma_{s_l}=0\) for all subsets
$$S=\{s_1,\ldots,s_l\}\subset \{1,\dots,m\}\, ,\ \ \  l> k \, .$$
In general the filtration \eqref{nn33}
has infinite length. But
 if we restrict the filtration to  \(\bbD_\PT^{X\bigstar}\), the filtration truncates 
since 
$$\bbD_\PT^3\cap \bbD_\PT^{X\bigstar} = \bbD_\PT^{X\bigstar}\, .$$
The correspondence  
$$\Cor^\bullet: \mathbb{D}^{X\bigstar}_{\PT}\rightarrow\mathbb{D}^{X+}_{\GW}$$ 
respects the analogous
bumping filtration $\bbD^k_\GW\cap \bbD_\GW^{X+}$ on
$\bbD_\GW^{X+}$
with respect to the monomials
$$\prod_{i=1}^m\tau_{k_i}(\gamma_i)$$
for which \(\gamma_{s_1}\cdots\gamma_{s_l}=0\) for all subsets
$$S=\{s_1,\ldots,s_l\}\subset \{1,\dots,m\}\, ,\ \ \  l> k \, .$$
Our
proof of the intertwining is separated into a calculation for each of
the four steps of the restriction of the bumping filtration
on $\bbD^{X\bigstar}_\PT$.



We discuss the Virasoro constraints for  Gromov-Witten theory
in Section~\ref{sec:virasoro-constraints} and for stable pairs
in Section \ref{pppp}. The stationary Virasoro constraints
of Theorem \ref{thm:Vir-toric} are proven in
Section \ref{jjjj9} modulo the intertwining of Theorem \ref{thm:intertw}.
The proof of the intertwining property is given in four steps:

\begin{enumerate}
\item[($\mathsf{0}$)] We start in Section~\ref{sec:main-result} with the special case
where $D\in \bbD^0_\PT\cap\bbD_\PT^{X\bigstar} $
is the trivial monomial 1. The result is 
Proposition~\ref{prop:CTT} of Section \ref{sec:const-term-comp}.

\item[($\mathsf{1}$)] For $D\in \bbD^1_\PT\cap\bbD_\PT^{X\bigstar}$,
the required results are proven in Section~\ref{mono9}.

\item[($\mathsf{2}$)]
Proposition \ref{prop:double-bump1} and Proposition \ref{prop:double-bump2}
of Section~\ref{sec:bumping-computations}
imply 
the intertwining property for $D\in \bbD_\PT^2\cap\bbD_\PT^{X\bigstar}$.

\item[($\mathsf{3}$)]
We treat $D\in \bbD_\PT^3\cap\bbD_\PT^{X\bigstar}=\bbD_\PT^{X\bigstar}$
in Proposition \ref{prop:triple-bump} of Section \ref{sec:bumping-computations}
to complete the proof of Theorem \ref{thm:intertw}.
\end{enumerate}

Let $S$ be a nonsingular projective toric surface.
As a consequence of the stationary Virasoro constraints for 
$$X=S\times \PP^1 \ \ \text{and}\ \  \beta=n[\PP^1]\, ,$$
we obtain {\em new} Virasoro
constraints for the integrals of the tautological classes over
Hilbert schemes of points  
\(\Hilb^n(S)\) of  surfaces $S$ in Section~\ref{sec:viras-constr-surf}.
The case of {\em all} simply connected nonsingular projective surfaces
is proven in \cite{Mor}.

After a review of the $\GW/\PT$ descendent correspondence from the
perspective of \cite{OblomkovOkounkovPandharipande18}
in Section \ref{gwptrev}, we complete the proof of
Theorem \ref{thm:cor-main}  
in Section \ref{sec:residue-computation}. A list of descendent series in degree 1 for
$\mathbb{P}^3$ is given in Section \ref{sec:appendix}.

\subsection{Acknowledgments} 
We are grateful to
D.~Maulik, N.~Nekrasov, G.~Oberdieck, D.~Oprea,
A.~Pixton, J.~Shen, R.~Thomas, and Q.~Yin for  
many conversations about descendents and
descendent correspondences.

A.~Ob. was partially supported by NSF CAREER grant
DMS-1352398 and Simons Foundation. A.~Ob. also would like to thank the Forschungsinstitut f\"ur Mathematik 
and the  Institute for Theoretical Studies at ETH Z\"urich for hospitality during the visits in November 2018, June 2019, and January 2020.
The paper is based upon work supported by the National Science Foundation under Grant No. 1440140 while the first two authors were in residence at the Mathematical Sciences Research Institute in Berkeley during the Spring semester of 2018.
A.~Ok. was partially supported by the Simons Foundation as a Simons Investigator.
A.~Ok. gratefully acknowledges funding by the Russian
Academic Excellence Project ’5-100’ and RSF grant 16-11-10160.
R.~P. was partially supported by 
SNF-200020-182181, SwissMAP, and
the Einstein Stiftung.

The project has received funding from the European Research
Council (ERC) under the European Union Horizon 2020 research and
innovation program (grant agreement No. 786580).

\section{Virasoro constraints for Gromov-Witten theory}
\label{sec:virasoro-constraints}

\subsection{Overview}
We will discuss here the \Vir constraints for stable maps. 
The constraints are equivalent to a procedure 
for removing the descendents of the canonical class. 
The procedures may be interpreted as series of the reactions
(similar to the reactions discussed in the context of
the $\GW/\PT$ descendent correspondence in \cite[Section 3]{OblomkovOkounkovPandharipande18}). Our goal is to write the \Vir constraints for
Gromov-Witten theory in a form which is as close as possible to the \Vir
constraints of Conjecture \ref{vvirr} for stable pairs.

\subsection{ Gromov-Witten constraints: original form}
\label{sec:stat-spec-viras}
The Virasoro constraints in Gromov-Witten theory were first proposed{\footnote{The full conjecture also involves ideas of S. Katz.}}
in \cite{EHX}.
We recall here the original form following
\cite{Pandharipande03}. In
Section \ref{sec:gw-viras-constr}, a reformulation which is more
suitable for the $\GW/\PT$ correspondence
will be presented.

In the discussion below, we fix a basis of \(H^*(X)\),
\begin{equation}\label{bbbsss}
  \gamma_0,\dots,\gamma_r\, , \quad \gamma_i\in H^{p_i,q_i}(X)\, ,
  \end{equation}
for which \(\gamma_0=1\), \(\gamma_1=c_1\), and
\(\gamma_{r}=[\pt]\).
We assume{\footnote{For Calabi-Yau 3-folds, the Virasoro invariants
    are a consequence of the string and dilaton equations (and there
    are no non-trivial stationary invariants).}}
$c_1\ne 0$.
We also fix a dual basis
$$\gamma^\vee_0,\dots,\gamma^\vee_r\, , \quad  \int_X\gamma_i \gamma_j^\vee=\delta_{ij}\, .$$

The standard method of describing of the Virasoro constraints uses the generating function for the Gromov-Witten
invariants (see \cite[section 4]{Pandharipande03}):
\[F^X=\sum_{g\ge 0}u^{2g-2}\sum_{\beta\in H_2(X,\ZZ)} q^\beta\sum_{n\ge 0}\sum_{\stackrel{a_1,\dots,a_n}{k_1,\dots,k_n}}t_{k_1}^{a_1}\dots t_{k_1}^{a_1}\dots t_{k_n}^{a_n}\, 
  \big\langle \tau_{k_1}(\gamma_1)\dots\tau_{k_n}(\gamma_n)
  \big\rangle_{g,\beta}^{X,\mathrm{Con}}\, ,\]
where $\big\langle, \big\rangle^{X,\mathrm{Con}}_{g,\beta}$ is the standard integral over stable maps with connected domains (and stable contracted
components of all genera are permitted).

The degree \(\beta=0\) summand $F_0^X$ of $F^X$ does not require knowledge of curves in $X$.
We further split the degree 0 summand into summands of genus \(g\le 1\) and genus \(g\ge 2\):
\[F^X_0=F^X_{0,g\le 1}+F^X_{0,g\ge 2}\, .\]
The $g\leq 1$ summand takes the form
\[F_{0,g\leq 1}^X=u^{-2}\sum_{i,j,k}\bigg(\frac{t_0^it_0^jt_0^k}{3!}+\frac{t_0^it_0^jt_1^kt_0^0}{2!}\bigg)\int_X\gamma_i\gamma_j \gamma_k-\sum_i\bigg(\frac{t_0^i}{24}+\frac{t_1^it_0^0}{24}\bigg)\int_X\gamma_i c_2+ \ldots,\]
where the dots stand for terms divisible by $(t_0^0)^2$.
The $g\geq 2$ summand \(F^X_{0,g \ge 2}\) 
is determined by the string and dilaton equations from
the constant maps contributions of \cite[Theorem 4]{FabP}.

Let $\widetilde{F}^X$ be the summand of $F^X$ with $\beta\neq 0$.
We define
$$Z_{0,*}^X=\exp(F_{0,*}^X)\, ,\quad \widetilde{Z}^X=\exp(\widetilde{F})\, .$$
The Gromov-Witten
bracket $\big\langle, \big\rangle^{X,\mathrm{GW}}_{g,\beta}$
introduced in Section \ref{p3f} corresponds to
the partition function
$$Z^X_{0,g\le 1}\cdot \widetilde{Z}^X =
\sum_{g\ge \mathbb{Z}}u^{2g-2}\sum_{\beta\in H_2(X,\ZZ)} q^\beta\sum_{n\ge 0}\sum_{\stackrel{a_1,\dots,a_n}{k_1,\dots,k_n}}t_{k_1}^{a_1}\dots t_{k_1}^{a_1}\dots t_{k_n}^{a_n}\, 
  \big\langle \tau_{k_1}(\gamma_1)\dots\tau_{k_n}(\gamma_n)
  \big\rangle_{g,\beta}^{X,\mathrm{GW}}\, .$$
The full partition function 
$$Z^X=\exp(F^X)=Z^X_{0,g\le 1} \cdot Z^X_{0,g \ge 2}\cdot \widetilde{Z}^X\, $$
corresponds to the standard disconnected
Gromov-Witten
bracket $\big\langle, \big\rangle^{X,\bullet}_{g,\beta}$,
\[Z^X=\sum_{g\ge 0}u^{2g-2}\sum_{\beta\in H_2(X,\ZZ)} q^\beta\sum_{n\ge 0}\sum_{\stackrel{a_1,\dots,a_n}{k_1,\dots,k_n}}t_{k_1}^{a_1}\dots t_{k_1}^{a_1}\dots t_{k_n}^{a_n}\, 
  \big\langle \tau_{k_1}(\gamma_1)\dots\tau_{k_n}(\gamma_n)
  \big\rangle_{g,\beta}^{X,\bullet}\, .\]

The Virasoro operators \(\mathrm{L}_k\), \(k\in \mathbb{Z}_{\ge -1}\) are differential operators which satisfy the Virasoro relations,
\[[\mathrm{L}_k,\mathrm{L}_\ell]=(k-\ell) \mathrm{L}_{k+\ell}\, ,\]
and annihilate the partition function
\begin{equation}\label{vvirrt}
\mathrm{L}_k \, Z^X=0 \, .
\end{equation}
For 3-folds $X$, 
the operators are defined by:
\begin{align*}
  \mathrm{L}_k=& \sum_{m=0}^\infty\sum_{i=0}^{k+1}\bigg( [p_a+m-1]_i^k(C^i)^b_a\tilde{t}^a_m\partial_{b,m+k-i}\\
  &+ \frac{u^2}{2}(-1)^{m+1}[-p_a+1-m]_i^k(C^i)^{ab}\partial_{a,m}\partial_{b,k-m-i-1}  \bigg)\\
               &  +\frac{u^{-2}}{2}(C^{k+1})_{ab}t_0^at^b_0\\
  &-\frac{\delta_{k}}{24}\int_Xc_1c_2\, ,
\end{align*}
where the Einstein conventions for summing over  repeated indices are
followed{\footnote{Here $\delta$ denotes the $\delta$-function: $\delta_k=0$ unless $k=0$,  $\delta_0=1$, and $\delta_{ab}=\delta_{a-b}$.}},
\[\tilde{t}_m^a=t^a_m-\delta_{a0}\delta_{m1}\, ,\quad \partial_{a,m}=\partial/\partial t_m^a\, ,\]
and $[x]^k_j=e_{k+1-j}(x,x+1,\dots,x+k)$.
The tensors in the equation are defined in terms of the dual basis:
\[(C^i)^a_b=\int_X\gamma_a^\vee c_1^i\gamma_b\, ,\quad (C^i )_{ab}=\int_X\gamma_ac_1^i\gamma_b\, ,\quad (C^i)^{ab}=\int_X\gamma_a^\vee c_1^i\gamma_b^\vee\, .\]

\subsection{Gromov-Witten constraints:
  correspondence form}
\label{sec:gw-viras-constr}

We rewrite here the Virasoro constraints of Section
\ref{sec:virasoro-constraints} in the form
most natural for the $\GW/\PT$ descendent correspondence.
Since all of our results are for toric varieties,
we specialize our discussion here to the case where
$X$ is a nonsingular projective 3-fold with only $(p,p)$-cohomology.

We start by
defining derivations \(\mathrm{R}^j_k\) and quadratic differentials
$\mathrm{B}^k$ on
\(\mathbb{D}_{\GW}^X\) by fixing the action on the generators:
\vspace{5pt}
\begin{enumerate}
\item[$\bullet$] The action of the derivation
  $\mathrm{R}^j_k$ on $\tau_i(\gamma)$
  for $k\geq -1$, $0\leq j \leq 3$, and  \(\gamma\in H^{2d}(X)\) is
$$\mathrm{R}^j_k(\tau_i(\gamma))=[i+d-1]^k_j\, \tau_{k+i-j}(\gamma\cdot c_1^j)\, ,$$
 where  $[x]^k_j=e_{k+1-j}(x,x+1,\dots,x+k)$ and
all terms $\tau_{\ell<-2}(\theta)$ are set to zero. As a special case,
\[\mathrm{R}^j_{-1}(\tau_i(\gamma)) = \delta_{j}\, \tau_{i-1}(\gamma)\, .\]
We will use the notation $\mathrm{R}_k = \sum_{j=0}^3 \mathrm{R}_k^j\, .$

\vspace{5pt}
\item[$\bullet$] The action of the quadratic differential
  $\mathrm{B}^k$ on $\tau_0(\gamma)\tau_0(\gamma')$ is 
  \[\mathrm{B}^k(\tau_0(\gamma)\tau_0(\gamma'))=\int_X\gamma\gamma' c_1^k\, .\]
  On all other quadratics terms, $\mathrm{B}^k$ acts trivially.

\end{enumerate}
\vspace{5pt}

The differential operators \(\rmL^\GW_k\), for $k\geq -1$, are then defined by
the formula:
\[\rmL_k^{\GW}= \mathrm{R}_k+\frac{u^{-2}}2\mathrm{B}^{k+1}+\frac{(\imath u)^2}{2}\CT_k-\frac{\delta_{k}}{24}\int_Xc_1c_2\, , \]
 where $\CT_k = \sum_{j=0}^3 \CT_k^j$ and
\begin{equation}\label{ppdd}
  \CT^j_k=\sum_{m=-1}^{k-j+2}(-1)^{m+1}[2-m-d_L]_j^k\, :\tau_{m-1}\tau_{-m+k-j}(c_1^j):\ ,
  \end{equation}
where \(d_L\) is the degree of the left term in the
co-product{\footnote{Define the element
\begin{equation*}
\tau_a\tau_b(\gamma) = \sum_i \tau_a(\gamma^L_i)
\tau_b(\gamma^R_i) \, \in\,  \bbD^X_{\GW}
\end{equation*}
where $\sum_i \gamma^L_i \otimes \gamma^R_i$
is the K\"unneth decomposition of the product, 
$$\gamma\cdot\Delta \in H^*(X\times X)\, ,$$
with the diagonal $\Delta$.}}
(as in Section \ref{sec:pt-vir-constraints}). In formula
\eqref{ppdd},
the symbol \(::\) stands for the normal ordering convention:
{\em all negative descendents \(\tau_{<0}(\gamma)\)
are on the left of the positive descendents}.

A calculation then yields the Virasoro bracket and the following
bracket with $\tau_k(\mathsf{p})$:
\begin{equation}\label{qqq999}
[\rmL^\GW_n,\rmL^\GW_k]=(n-k)\, \rmL^\GW_{n+k}\, , \ \ \ \ \ \ [\rmL^\GW_n,(k+1)!\, \tau_k(\mathsf{p})]=  
(k+n+2)!\, \tau_{n+k}(\mathsf{p})\, .
\end{equation}

\vspace{8pt}

\begin{theorem}\label{vvirrGW}
(Givental \cite{Giv})  Let $X$ be a nonsingular projective toric 3-fold, 
and let\\ $\beta \in H_2(X,\mathbb{Z})$. 
For all $k\geq -1$ and
\(D\in \bbD^X_{\GW}\),
we have
 \[\Big\langle \rmL^\GW_k(D) \Big\rangle_\beta^{X,\bullet}=0\, .\]
\end{theorem}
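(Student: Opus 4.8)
The plan is to deduce Theorem~\ref{vvirrGW} not as a new geometric fact but as a reformulation of the Virasoro constraints in their original shape, equation~\eqref{vvirrt}, which is exactly Givental's theorem for toric varieties \cite{Giv}. The entire content is therefore the claim that the operator $\rmL^\GW_k$ acting inside the disconnected bracket $\langle\,\cdot\,\rangle^{X,\bullet}_\beta$ encodes the same relations as the differential operator $\mathrm{L}_k$ annihilating the full partition function $Z^X=\exp(F^X)$. Since $Z^X$ is precisely the generating series of the $\langle\,\cdot\,\rangle^{X,\bullet}$ brackets, this is a matter of translating between the generating-function and the operator-algebra languages.

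First I would fix the dictionary. A monomial $\prod_i\tau_{k_i}(\gamma_{a_i})\in\bbD^X_\GW$ corresponds to the iterated derivative $\prod_i\partial_{a_i,k_i}$ evaluated at $t=0$; reading off the coefficient of $q^\beta u^{2g-2}$ recovers $\langle\prod_i\tau_{k_i}(\gamma_{a_i})\rangle^{X,\bullet}_\beta$. Under this correspondence, multiplication by $t^a_m$ adds an insertion $\tau_m(\gamma_a)$ and $\partial_{a,m}$ removes one. Applying $\mathrm{L}_k$ to $Z^X$ and extracting a fixed monomial's coefficient then produces exactly the relation $\langle\rmL^\GW_k(D)\rangle^{X,\bullet}_\beta=0$, \emph{provided} the three constituent pieces of $\mathrm{L}_k$ match the three pieces of $\rmL^\GW_k$ term by term.

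The core step is this three-fold comparison. The linear transport term $\sum[p_a+m-1]^k_i(C^i)^b_a\tilde t^a_m\partial_{b,m+k-i}$ must reproduce the derivation $\mathrm{R}_k=\sum_{j=0}^3\mathrm{R}^j_k$: here the contraction index $i$ in $\mathrm{L}_k$ plays the role of the superscript $j$ (both effectively run from $0$ to $3$ since $c_1^{>3}=0$), the tensor $(C^i)^b_a=\int_X\gamma_a^\vee c_1^i\gamma_b$ implements the action $\gamma\mapsto\gamma\cdot c_1^j$ built into $\mathrm{R}^j_k$, and the dilaton shift $\tilde t^a_m=t^a_m-\delta_{a0}\delta_{m1}$ accounts for the collapsed corrections already visible in the string equation~\eqref{sss}. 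The quadratic term $\frac{u^{-2}}{2}(C^{k+1})_{ab}t^a_0t^b_0$ matches $\frac{u^{-2}}{2}\mathrm{B}^{k+1}$ immediately, since $\mathrm{B}^{k+1}$ is defined to act only on $\tau_0(\gamma)\tau_0(\gamma')$ and to return $\int_X\gamma\gamma'c_1^{k+1}$. The double-derivative term $\frac{u^2}{2}(-1)^{m+1}[-p_a+1-m]^k_i(C^i)^{ab}\partial_{a,m}\partial_{b,k-m-i-1}$ must match $\frac{(\imath u)^2}{2}\CT_k$ with $\CT_k$ as in~\eqref{ppdd}; the factor $(\imath u)^2=-u^2$ supplies a global sign, while the normal ordering $::$ in~\eqref{ppdd} records how the negative descendents $\tau_{-1},\tau_{-2}$ generated by the two derivatives are to be evaluated via the rule~\eqref{ActionVacuum}. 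The constant $-\frac{\delta_k}{24}\int_X c_1c_2$ appears identically in both operators and so matches with nothing to check.

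The main obstacle is the precise bookkeeping in the double-derivative comparison: reconciling the sign $(-1)^{m+1}$ and the factor $(\imath u)^2$, tracking the Hodge degrees $p_a$ that enter the symbols $[\,\cdot\,]^k_j$ and hence the K\"unneth degree $d_L$ used in~\eqref{ppdd}, and verifying that the normal-ordering prescription faithfully reproduces the placement and vanishing rules of $\tau_{-1}$ and $\tau_{-2}$ from~\eqref{ActionVacuum}. Once the index shifts (e.g.\ $m\mapsto m-1$ and $k-m-i-1\mapsto -m+k-j$) and signs are checked to agree, the equivalence of the two operator forms is purely formal, and Givental's theorem~\eqref{vvirrt} immediately yields the stated vanishing. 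As a consistency check, the Virasoro bracket~\eqref{qqq999} satisfied by $\rmL^\GW_k$ must match the relation $[\mathrm{L}_k,\mathrm{L}_\ell]=(k-\ell)\mathrm{L}_{k+\ell}$ obeyed by the original operators, which it does.
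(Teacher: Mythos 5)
Your route is exactly the paper's route: Theorem \ref{vvirrGW} is not proved afresh but identified as a reformulation of Givental's constraints \eqref{vvirrt}, which the paper simply cites (virtual localization plus Witten's conjecture/Kontsevich's theorem, with Teleman's classification as an alternative), asserting without details that the correspondence-form operators $\rmL_k^\GW$ are ``exactly equivalent'' to the original differential operators $\mathrm{L}_k$ acting on $Z^X$. Your dictionary between monomials in $\bbD^X_\GW$ and coefficient extraction from $Z^X$, followed by a term-by-term comparison of the two operators, is precisely the content of that asserted equivalence, so in approach and substance you agree with the paper.

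One concrete wiring error in your matching would, however, make the verification fail if carried out literally. The dilaton shift $\tilde{t}^a_m=t^a_m-\delta_{a0}\delta_{m1}$ does \emph{not} belong to the matching with $\mathrm{R}_k$: expanding it produces pure derivative terms $-[0]^k_i\,(C^i)^b_0\,\partial_{b,1+k-i}$, i.e.\ extra \emph{single} insertions $\tau_{1+k-i}(c_1^i)$ inside the bracket, which a derivation such as $\mathrm{R}_k$ (preserving the length of monomials) can never produce. These terms are instead matched by the $m=-1$ terms of $\CT_k$ in \eqref{ppdd}, i.e.\ the terms containing $\tau_{-2}$: after normal ordering, $\tau_{-2}(\gamma)$ evaluates to $u^{-2}\int_X\gamma$ by \eqref{ActionVacuum}, so the prefactor $\frac{(\imath u)^2}{2}$ in $\rmL^\GW_k$ cancels the $u^{-2}$ and these apparently quadratic terms are in fact the linear dilaton-shift terms. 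Correspondingly, your statement that the negative descendents in \eqref{ppdd} are ``generated by the two derivatives'' is backwards: $\partial_{a,m}\partial_{b,k-m-i-1}$ has both indices non-negative and matches only the genuinely quadratic part of $\CT_k$ (both $\tau$-indices $\geq 0$), while the $\tau_{-1}$ terms of $\CT_k$ evaluate to zero by \eqref{ActionVacuum} and the $\tau_{-2}$ terms are the dilaton shift. With the pairings re-sorted in this way, the formal equivalence closes and Givental's theorem yields the stated vanishing.
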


Theorem \ref{vvirrGW}, which is exactly equivalent to
constraints \eqref{vvirrt} for toric 3-folds,
was proven by Givental in two steps:
\begin{enumerate}
\item[(i)] Using the
virtual localization formula of \cite{GP}, the Gromov-Witten
theory of $X$ is expressed in
terms of graphs sums with descendent integrals over the moduli spaces
of curves $\oM_{g,n}$
at the vertices.
\item[(ii)] The Virasoro constraints, conjectured by Witten \cite{Wit}
for $\oM_{g,n}$ and proven in \cite{Konts},
are then used to establish
 the Virasoro constraints for $X$.
\end{enumerate}
A second proof of Theorem \ref{vvirrGW}, via the Givental-Teleman classification{\footnote{We refer
the reader to \cite{P-ICM} for an introduction.}} of
semisimple CohFTs, was given in \cite{Tel}. For varieties
with non-semisimple Gromov-Witten theory,
the Virasoro constraints are known in very few cases.{\footnote{The main
known examples are based on the Virasoro constraints for
curves proven in \cite{OPVir}.}}

  
\subsection{Gromov-Witten constraints: stationary form}
\label{sec:changes-variables-gw}
We rewrite the \Vir constraints  in
Gromov-Witten theory of Section \ref{sec:gw-viras-constr}
in a form which preserves the algebra of  stationary descendents,  
$$  \bbD^{X+}_{\GW}\subset  \bbD^X_{\GW}
\, .$$



We fix a basis \eqref{bbbsss} of the cohomology of $X$
which satisfies the following
further conventions.
Let $$\gamma_1,\dots,\gamma_s\in H^2(X)$$
be a basis with \(\gamma_1=c_1\).
Let $$\gamma_{2s},\dots,\gamma_{s+1}\in H^4(X)$$
be a dual basis with respect to the Poincar\'e pairing.
Let  
$$\gamma_0=1\in H^0(X)\, ,\ \ \ \gamma_{2s+1}=\pt\in H^6(X)$$
span the rest of the cohomology.{\footnote{To match
    with \eqref{bbbsss}, $r=2s+1$.}}
 The K\"unneth decomposition of the diagonal is
\[\Delta=\sum_{i=0}^{2s+1}\gamma_{i}\otimes\gamma_{2s+1-i}\, .\]

Consider the  term $\CT_k$.
The only place for descendents of \(1\) to appear in the operator
\(\mathrm{L}^\GW_k\) 
is in
\(\CT_k^0\). As most of the terms of \(\CT^0_k\) vanish by definition,
we find
\begin{equation}\label{ffggyy}
\frac{1}{2}\CT^0_k=(k+1)!\, :\tau_0(1)\tau_{k-1}(\pt):\, .
\end{equation}
We denote the rest of the term by \(\CT'_k\),
$$\CT_k = \CT'_k + \CT_k^0\, .$$
Inside the bracket $\langle, \rangle^{X,\bullet}_\beta$,
 the insertion \(\tau_0(1)\) can be removed by the string equation. 
We are therefore led to define  the
operator
\[\mathcal{L}_k^\GW=\frac{(\imath u)^2}2\CT'_k+\mathrm{R}_k
+\frac{u^{-2}}2\mathrm{B}^{k+1}
  +(\imath u)^2(k+1)!\,\mathrm{R}_{-1}\tau_{k-1}(\pt)\, ,\quad\ \ \CT'_k=\sum_{j>0}\CT^j_k\, ,\]
where \(\rmR_k=\sum_{j=0}^3 \rmR_k^j\) and
\(\mathrm{R}_{-1}\) is the differentiation defined on the generators by
\[\mathrm{R}_{-1}\tau_{k}(\gamma)=\tau_{k-1}(\gamma)\, .\]
Inside the bracket  $\langle, \rangle^{X,\bullet}_\beta$,
we have\footnote{Note
  \(\mathcal{L}_0^{\GW}=\widetilde{\rmL}_0^{\GW}\).}
\begin{equation}\label{kk33}
  \mathcal{L}_k^{\GW}\, \eql\, \widetilde{\rmL}_k^{\GW}+(\imath u)^2(1-\delta_k)(k+1)!\, \widetilde{\rmL}_{-1}^{\GW}\tau_{k-1}(\pt)\, ,
\end{equation}
where we have modified the \Vir operators to exclude the descendents of \(1\):
\begin{equation}
  \label{eq:L-til}
  \widetilde{\rmL}_k^\GW\, =\, \rmL_k^\GW - \frac{(\imath u)^2}{2} \CT^0_k \, =\,
\frac{(\imath u)^2}{2}\CT'_k
+ \rmR_k+\frac{u^{-2}}2\mathrm{B}^{k+1}-\frac{\delta_k}{24}\int_Xc_1c_2\, . 
\end{equation}
Though the operators \(\mathcal{L}_k^\GW\) no longer satisfy the \Vir bracket,
the  operators \(\mathcal{L}_k^\GW\)
preserve the  subalgebra  \(\mathbb{D}_{\GW}^{X+}\subset \mathbb{D}_{\GW}^X\).

\begin{proposition}\label{ggkk}
Let $X$ be a nonsingular projective toric 3-fold, 
and let\\ $\beta \in H_2(X,\mathbb{Z})$. 
For all $k\geq -1$ and 
\(D\in \bbD^{X+}_{\GW\circ}\),
we have
 \[\Big\langle \calL^\GW_k(D) \Big\rangle_\beta^{X,\bullet}=0\, .\]
\end{proposition}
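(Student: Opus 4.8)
The plan is to deduce the stationary constraints from Givental's full Virasoro constraints (Theorem~\ref{vvirrGW}) together with the string equation \eqref{sss}. The crucial point is that $\big\langle \rmL^\GW_k(D)\big\rangle^{X,\bullet}_\beta=0$ holds for \emph{every} $D\in\bbD^X_\GW$, not merely for stationary $D$; all the work then lies in comparing $\mathcal{L}_k^\GW$ with $\rmL_k^\GW$ inside the bracket. Note also that the operators $\mathcal{L}_k^\GW$ preserve $\bbD^{X+}_\GW$, so the statement is well posed.

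First I would subtract the two operators directly from their definitions. The derivation $\rmR_k$, the pairing term $\frac{u^{-2}}2\mathrm{B}^{k+1}$, and the quadratic part $\CT'_k=\sum_{j>0}\CT^j_k$ are common to both, so the difference is concentrated in the descendent-of-$1$ term $\CT^0_k$ and the correction $(\imath u)^2(k+1)!\,\rmR_{-1}\tau_{k-1}(\pt)$:
\[
\rmL_k^\GW-\mathcal{L}_k^\GW=\frac{(\imath u)^2}{2}\CT^0_k-(\imath u)^2(k+1)!\,\rmR_{-1}\tau_{k-1}(\pt)-\frac{\delta_k}{24}\int_X c_1c_2\, .
\]
Since $\big\langle\rmL_k^\GW(D)\big\rangle^{X,\bullet}_\beta=0$, proving the proposition is equivalent to showing $\big\langle(\rmL_k^\GW-\mathcal{L}_k^\GW)(D)\big\rangle^{X,\bullet}_\beta=0$. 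Using \eqref{ffggyy} to rewrite $\tfrac12\CT^0_k=(k+1)!\,{:}\tau_0(1)\tau_{k-1}(\pt){:}$, this reduces (up to the $k=0$ constant) to a comparison between the operators $:\tau_0(1)\tau_{k-1}(\pt):$ and $\rmR_{-1}\tau_{k-1}(\pt)$ inside the bracket.

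Second I would invoke the string equation. Because $D$ is stationary, the descendent $\tau_0(1)$ appears only through the explicit factor coming from $\CT^0_k$, so $\big\langle{:}\tau_0(1)\tau_{k-1}(\pt){:}(D)\big\rangle^{X,\bullet}_\beta$ is a single application of \eqref{sss}. The leading term of the string equation lowers every descendent index by one, which is exactly the derivation $\rmR_{-1}$; hence $\big\langle{:}\tau_0(1)\tau_{k-1}(\pt){:}(D)\big\rangle^{X,\bullet}_\beta=\big\langle\rmR_{-1}(\tau_{k-1}(\pt)D)\big\rangle^{X,\bullet}_\beta$ plus the collapsed correction terms of \eqref{sss}. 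The two operators therefore cancel up to these corrections, and I would verify that the corrections are precisely absorbed by the constant $-\frac{\delta_k}{24}\int_X c_1c_2$. This reorganization is exactly the content of \eqref{kk33}, with $\widetilde{\rmL}^\GW_{-1}$ recording the quadratic and pairing tails generated by the string equation.

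The hard part will be the exact bookkeeping of the string-equation corrections, and in particular the two exceptional low-degree cases. For $k\geq 1$ all descendent indices stay nonnegative and the normal ordering in $\CT^0_k$ is trivial, so the cancellation is clean and the collapsed $c_2$-corrections vanish. For $k=0$, however, the insertion $\tau_{k-1}(\pt)=\tau_{-1}(\pt)$ is a negative descendent that must be commuted to the left via the Heisenberg relations \eqref{pp23pp} and evaluated through \eqref{ActionVacuum}; it is here that the $\frac1{24}\int_X c_1c_2$ term enters and forces $\mathcal{L}_0^\GW\eql\widetilde{\rmL}_0^\GW$. The boundary case $k=-1$ similarly requires treating $\tau_{-2}(\pt)$ through the vacuum term $\mathsf{Z}^0$. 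I would therefore first establish the identity for $k\geq 1$, where the argument is transparent, and then dispatch the cases $k=0$ and $k=-1$ by direct expansion of the Getzler substitution \eqref{eq:Getzler}.
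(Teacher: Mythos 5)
Your overall strategy is the paper's own: you compute the operator difference $\rmL^\GW_k-\calL^\GW_k$ correctly, reduce to Theorem \ref{vvirrGW}, and convert the descendent-of-$1$ term $\CT^0_k$ into the $\rmR_{-1}\tau_{k-1}(\pt)$ term using the string equation; your treatment of $k=0,-1$ (commuting $\tau_{-1}(\pt)$ and $\tau_{-2}(\pt)$ to the left, evaluating by \eqref{ActionVacuum}, and using $\tfrac{1}{24}\int_Xc_1c_2=1$) also matches the paper. The problem is the central claim for $k\geq 1$.

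In the disconnected bracket the string equation is precisely Givental's $\rmL^\GW_{-1}$-constraint, and it carries a quadratic correction, not only the "collapsed $c_2$-corrections" you invoke: for $\Phi=\tau_{k-1}(\pt)D$,
\[
\big\langle\tau_0(1)\Phi\big\rangle^{X,\bullet}_\beta=\big\langle\rmR_{-1}\Phi\big\rangle^{X,\bullet}_\beta+\frac{u^{-2}}{2}\big\langle\rmbb^0(\Phi)\big\rangle^{X,\bullet}_\beta\, ,
\]
where the last term records $\tau_0(1)$ landing on a collapsed rational component together with two further $\tau_0$-insertions (it is the $\frac{u^{-2}}{2}(C^{0})_{ab}t_0^at_0^b$ term of $\mathrm{L}_{-1}$; the genus-one terms do vanish, since $\int_X 1\cdot c_2=0$). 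For stationary $D$ this tail is not automatically zero: any pair $\tau_0(\delta)\tau_0(\sigma)$ in $D$ with $\delta\in H^2$, $\sigma\in H^4$, $\int_X\delta\sigma\neq 0$ contributes, and it is certainly not absorbed by $-\frac{\delta_k}{24}\int_Xc_1c_2$, which appears only at $k=0$. Handling this tail is exactly the content of \eqref{kk33}, where it is carried by $\widetilde\rmL^\GW_{-1}=\rmR_{-1}+\frac{u^{-2}}{2}\rmbb^0$; a blind proof must establish that statement rather than assert that the corrections disappear. Note how the paper's proof is arranged so that, after \eqref{kk33}, no geometric correction ever needs to be shown to vanish: it applies Theorem \ref{vvirrGW} twice — once to $\rmL^\GW_k(D)$ and once to $\rmL^\GW_{-1}(\tau_{k-1}(\pt)D)$, the latter retaining the full quadratic tail — and then cancels the two leftover terms $\CT^0_k(D)$ and $(\imath u)^2(k+1)!\,\CT^0_{-1}(\tau_{k-1}(\pt)D)$ against each other purely algebraically, using the vanishing commutator from \eqref{pp23pp} and the evaluation $\tau_{-2}(\pt)\eql u^{-2}$ from \eqref{ActionVacuum}, so that the second term is $(\imath u)^2u^{-2}=-1$ times the first. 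Either restructure your argument in that form, or supply a proof that $\langle\rmbb^0(\tau_{k-1}(\pt)D)\rangle^{X,\bullet}_\beta$ does not obstruct the identity for the class of $D$ under consideration; as written, this step is missing.
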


\begin{proof}
  The case \(k=0\) follows because \[\mathcal{L}_0^{\GW}-\rmL_0^{\GW}=\CT_0^0=2:\tau_{0}(1)\tau_{-1}(\pt):\]
  and \(\Big\langle\CT_0^0\dots \Big\rangle^{X,\bullet}_\beta=0\). For the
  other case the argument is below.
  
  Using \eqref{kk33} and \eqref{eq:L-til}, we have
\begin{multline}\label{kkk222}
\Big\langle \calL^\GW_k(D) \Big\rangle_\beta^{X,\bullet} =
\Big\langle \rmL_k^\GW(D) +(\imath u)^2(k+1)!\, \rmL_{-1}^{\GW}(\tau_{k-1}(\pt) D)
\Big \rangle^{X,\bullet}_\beta\\
-\frac{(\imath u)^2}{2} \Big \langle \CT^0_k(D)
+(\imath u)^2(k+1)!\, \CT^0_{-1} (\tau_{k-1}(\pt)D)
\Big \rangle^{X,\bullet}_\beta\, .
\end{multline}
The first bracket on the right side of \eqref{kkk222}
vanishes by Theorem \ref{vvirrGW}.
We can write the second bracket on the right as
\begin{multline*}
\frac{(\imath u)^2}{2}\Big \langle \CT^0_k(D)
+(\imath u)^2(k+1)!\, \CT^0_{-1} (\tau_{k-1}(\pt)D)
\Big \rangle^{X,\bullet}_\beta =\\
{(\imath u)^2}\Big \langle (k+1)!\, \tau_0(1) \tau_{k-1}(\pt) D
+(\imath u)^2(k+1)!\, \tau_0(1)\tau_{-2}(\pt) \tau_{k-1}(\pt)D 
\Big \rangle^{X,\bullet}_\beta
\end{multline*}
using  \eqref{ffggyy}. 
The right side of the
above equation, after applying the commutator \eqref{ActionVacuum}, is
$${(\imath u)^2}\Big \langle (k+1)!\, \tau_0(1) \tau_{k-1}(\pt) D
+(\imath u)^2(k+1)!\, \tau_{-2}(\pt) \tau_0(1) \tau_{k-1}(\pt)D 
\Big \rangle^{X,\bullet}_\beta\, ,$$
which vanishes after applying \eqref{pp23pp}.
\end{proof}

In our study of the $\GW/\PT$ descendent correspondence,
we are interested in the Gromov-Witten 
bracket $\big\langle, \big\rangle^{X,\mathrm{GW}}_{g,\beta}$
of Section \ref{p3f}
instead of the standard disconnected bracket
$\big\langle, \big\rangle^{X,\bullet}_{g,\beta}$.
Therefore, the following result is important for
our study.

\begin{proposition}\label{ggkk17}
Let $X$ be a nonsingular projective toric 3-fold, 
and let\\ $\beta \in H_2(X,\mathbb{Z})$. 
For all $k\geq -1$ and 
\(D\in \bbD^{X+}_{\GW\circ}\),
we have
 \[\Big\langle \calL^\GW_k(D) \Big\rangle_\beta^{X,\GW}=0\, .\]
\end{proposition}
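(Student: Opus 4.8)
The plan is to transfer the vanishing from the disconnected bracket $\langle\,\cdot\,\rangle^{X,\bullet}_\beta$, already established in Proposition~\ref{ggkk}, to the bracket $\langle\,\cdot\,\rangle^{X,\mathrm{GW}}_\beta$. The two brackets differ only by the genus $\ge 2$, degree $0$ constant-map contributions: writing $Z^X=Z^X_{0,g\ge 2}\cdot W$ with $W=Z^X_{0,g\le 1}\cdot\widetilde Z^X$, the factor $W$ computes $\langle\,\cdot\,\rangle^{X,\mathrm{GW}}_\beta$ while the full $Z^X$ computes $\langle\,\cdot\,\rangle^{X,\bullet}_\beta$. Since $\calL^\GW_k$ preserves the stationary subalgebra $\bbD^{X+}_{\GW}$, it suffices to compare the two brackets on an arbitrary stationary element $E\in\bbD^{X+}_{\GW}$ and then specialize to $E=\calL^\GW_k(D)$.

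The key step is a vanishing lemma for the constant-map factor: for $g\ge 2$ and any insertion $\prod_i\tau_{k_i}(\gamma_i)$ with at least one marked point and all $\gamma_i\in H^{>0}(X)$, the degree $0$ invariant $\langle\prod_i\tau_{k_i}(\gamma_i)\rangle^{X,\mathrm{Con}}_{g,0}$ vanishes. Indeed, the moduli of degree $0$ stable maps is $\oM_{g,n}\times X$ with obstruction bundle $\mathbb{E}^\vee\boxtimes T_X$, so the invariant equals $\int_{\oM_{g,n}\times X}\prod_i\psi_i^{k_i}\,\ev_i^*(\gamma_i)\cdot e(\mathbb{E}^\vee\boxtimes T_X)$, where $\ev_i$ is the projection to $X$. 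A dimension count ($\sum_i k_i+\sum_i\deg\gamma_i=n$ with each $\deg\gamma_i\ge 1$) forces every $k_i=0$ and every $\gamma_i\in H^2(X)$; the $X$-integral then pairs $\prod_i\gamma_i$ with a fixed Chern polynomial of degree $3-n$ in $c_\bullet(T_X)$, leaving on $\oM_{g,n}$ a polynomial in the $\lambda$-classes of total degree $3g-3+n$. As this $\lambda$-polynomial is pulled back along the forgetful map $\oM_{g,n}\to\oM_g$ from a class of degree $3g-3+n>3g-3=\dim\oM_g$, it vanishes for every $n\ge 1$. (For small $n$ this recovers the classical relations $\lambda_g^2=0$ and so on.) I expect this lemma to be the main point; everything else is formal.

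Granting the lemma, expanding $\partial_E(Z^X_{0,g\ge 2}\cdot W)$ by the Leibniz rule and using $Z^X_{0,g\ge 2}=\exp F^X_{0,g\ge 2}$ gives, for any stationary $E$ and any $\beta$,
\[
\langle E\rangle^{X,\bullet}_\beta \;=\; C(u)\,\langle E\rangle^{X,\mathrm{GW}}_\beta\,,\qquad C(u):=Z^X_{0,g\ge 2}\big|_{t=0}\,.
\]
The point is that any nonempty group of the (stationary) derivatives applied to $F^X_{0,g\ge 2}$ and evaluated at $t=0$ computes a connected genus $\ge 2$ degree-$0$ invariant carrying at least one stationary insertion, which vanishes by the lemma; hence only the term leaving $Z^X_{0,g\ge 2}$ undifferentiated survives, contributing the $q$-independent scalar $C(u)$, which factors out of $[q^\beta]$. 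The series $C(u)=\exp\!\big(\sum_{g\ge 2}u^{2g-2}c_g\big)$, with $c_g\in\mathbb{Q}$ the insertion-free constant-map contributions, has constant term $1$ and is therefore invertible in $\mathbb{Q}[[u]]$.

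Finally, take $E=\calL^\GW_k(D)$ for $D\in\bbD^{X+}_{\GW}$; this is stationary because $\calL^\GW_k$ preserves $\bbD^{X+}_{\GW}$, and although it carries $u$-dependent coefficients the displayed identity is $\mathbb{Q}((u))$-linear and so applies term by term. By Proposition~\ref{ggkk} the left-hand side $\langle\calL^\GW_k(D)\rangle^{X,\bullet}_\beta$ vanishes; dividing by the invertible series $C(u)$ yields $\langle\calL^\GW_k(D)\rangle^{X,\mathrm{GW}}_\beta=0$, as required.
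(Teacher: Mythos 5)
Your proposal is correct and follows essentially the same route as the paper: factor the disconnected bracket as $Z^X_{0,g\geq 2}\big|_{t=0}$ times the $\GW$ bracket using the vanishing of genus $\geq 2$ degree-$0$ contributions against stationary insertions, invoke Proposition~\ref{ggkk}, and divide by the invertible constant-map factor. The only difference is one of detail: you actually prove the constant-map vanishing lemma (via the obstruction bundle $\mathbb{E}^\vee\boxtimes T_X$ and the observation that the resulting $\lambda$-class polynomial of degree $3g-3+n$ is pulled back from $\oM_g$, hence vanishes), which the paper merely asserts, and you use invertibility of the exponential factor directly rather than citing the explicit evaluation $\int_{\oM_g}\lambda_{g-1}^3$ of \cite{FabP}.
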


\begin{proof}
  Since $\calL^\GW_k$ preserves $\bbD^{X+}_{\GW}$, we have
  $$\calL^\GW_k(D) \in \bbD^{X+}_{\GW}\, .$$
  Since the Gromov-Witten invariants corresponding
  to collapsed connected components of genus at least 2
  {\em always} vanish
  in the presence of stationary descendents,
  $$
  \Big\langle \calL^\GW_k(D) \Big\rangle_\beta^{X,\bullet}
= Z^X_{0,g\geq 2}\Big{|}_{\{t^i_k=0\}}  \cdot \Big\langle \calL^\GW_k(D) \Big\rangle_\beta^{X,\GW} \, .$$
  Since $\Big\langle \calL^\GW_k(D) \Big\rangle_\beta^{X,\bullet}$ vanishes
  by Proposition \ref{ggkk} and
  $$Z^X_{0,g\geq 2}\Big{|}_{\{t^i_k=0\}} = \exp\left(\sum_{g=2}^\infty
  (-1)^g u^{2g-2}\, \frac{\chi(X)}{2} \, \int_{\overline{M}_g}\lambda_{g-1}^3\right)$$
  is invertible{\footnote{See \cite[Theorem 4]{FabP} for the evaluation.}},
  $\Big\langle \calL^\GW_k(D) \Big\rangle_\beta^{X,\GW}$ also vanishes.
\end{proof}



\section{Theorem \ref{thm:Vir-toric}: Virasoro constraints for stable pairs} \label{pppp}

\subsection{Intertwining property}
\label{sec:intertw-prop}

We have already defined
the operators $\rmL_k^\PT$ and $\calL^\PT_k$
on $\mathbb{D}_\PT^X$ in
Sections \ref{sec:pt-vir-constraints} and \ref{virbra}:
$$\rmL_k^\PT= \CT_k+\rmR_k \, ,
\ \ \
\calL^\PT_k=
\rmL_k^\PT+(k+1)!\, \rmL_{-1}^\PT\ch_{k+1}(\pt)\, ,$$
for $k\geq -1$.
We also have
\begin{equation}
  \label{ppp999}
[\rmL_n^\PT,\rmL_k^\PT]\, \eql\, (k-n)\rmL_{n+k}^\PT\, , \ \ \ 
[\rmL_n^\PT,(k-1)!\, \ch_k(\pt)]\, \eql\, (n+k)!\, \ch_{n+k}(\pt)\, .
\end{equation}
Equations \eqref{ppp999} are parallel to equations \eqref{qqq999} in Gromov-Witten
theory.

The main computation of the paper is the 
{\em intertwining property} which relates
the Virasoro operators for the stable pairs and Gromov-Witten
theories via the descendent correspondence.
We separate the argument into two cases: $k\leq  0$
and $k \geq 1$. Proposition \ref{PPP111} covers the
$k\leq 0$ case. 
The
$k\geq 1$  case treated in Theorem \ref{thm:intertw} is harder.

Proposition \ref{PPP111} is
proven in Section \ref{ppff11} except for steps at the
end of the proof which will be completed in the proof of
Theorem \ref{thm:intertw} in
Sections  \ref{sec:main-result}-\ref{sec:bumping-computations}.
The argument is an intricate calculation based on a strategy of filtration.

\begin{proposition}\label{PPP111}
  For  \(k=-1,0\) and 
    \(D\in \mathbb{D}^{X\bigstar}_{\PT}\), we have 
 \[\Cor^\bullet\circ \rmL_{k}^\PT(D)\, = \, (\imath u)^{-k}\, 
   \widetilde{\rmL}_{k}^\GW\circ \Cor^\bullet(D)\, \]
 after the restrictions $\tau_{-2}(\pt)=1$ and $\tau_{-1}(\pt)=0$.
  \end{proposition}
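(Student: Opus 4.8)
The plan is to prove the operator identity by decomposing each Virasoro operator into its derivation part and its quadratic part, and matching the two parts separately across the correspondence $\Cor^\bullet$. Writing $\rmL_k^\PT = \CT_k + \rmR_k$ and, by \eqref{eq:L-til}, $\widetilde{\rmL}_k^\GW = \frac{(\imath u)^2}{2}\CT'_k + \rmR_k + \frac{u^{-2}}{2}\mathrm{B}^{k+1} - \frac{\delta_k}{24}\int_X c_1 c_2$, I would first record that for $k = -1, 0$ these collapse to very few terms. Indeed, $\rmR_{-1}$ is the pure shift $\ch_i(\gamma) \mapsto \ch_{i-1}(\gamma)$ and $\rmR_0(\ch_i(\gamma)) = (i+d-3)\ch_i(\gamma)$, while on the Gromov--Witten side only $\rmR_{-1}^0(\tau_i(\gamma)) = \tau_{i-1}(\gamma)$ survives for $k = -1$, and for $k = 0$ only $\rmR_0^0(\tau_i(\gamma)) = (i+d-1)\tau_i(\gamma)$ and $\rmR_0^1(\tau_i(\gamma)) = \tau_{i-1}(\gamma c_1)$ remain. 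This makes the whole comparison a finite, explicit check on a monomial $D = \prod_i \tch_{k_i}(\gamma_i) \in \mathbb{D}^{X\bigstar}_{\PT}$.

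The derivation part comes first. Both $\rmR_k^\PT$ and $\rmR_k^\GW$ are derivations that preserve the number of insertions, so using the multiplicativity of $\Cor^\bullet$ over set partitions I would reduce the target sub-identity $\Cor^\bullet \circ \rmR_k^\PT = (\imath u)^{-k}\,\rmR_k^\GW \circ \Cor^\bullet$ to a check on the connected correspondences $\Cor^\circ$ of \eqref{eq:decay-intro}--\eqref{eq:triple-bump-intro}. Converting between $\tau$ and $\fra$ via Getzler's renormalization \eqref{eq:Getzler}--\eqref{eq:tau2a}, I would verify term by term, using the single-insertion formula \eqref{eq:decay-intro}, that the $\gamma c_1$- and $\gamma c_1^2$-twists produced by $\rmR_k^j$ on the Gromov--Witten side are matched by the corresponding twisted terms of the correspondence, and that the resulting scalar factor is exactly $(\imath u)^{-k}$, tracked through the explicit $(\imath u)^{-1}, (\imath u)^{-2}$ prefactors in the correspondence. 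The simplicity of the $k=-1,0$ coefficients is what makes this matching manageable without the full bumping machinery.

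The quadratic part is the substantive computation: I would show that $\Cor^\bullet(\CT_k\cdot D)$ reproduces $(\imath u)^{-k}\bigl(\frac{(\imath u)^2}{2}\CT'_k + \frac{u^{-2}}{2}\mathrm{B}^{k+1} - \frac{\delta_k}{24}\int_X c_1 c_2\bigr)$ applied to $\Cor^\bullet(D)$. The element $\CT_k$ contributes two new descendents of $c_1$ (and of $c_1 c_2$), so under the multiplicative rule these either stay in their own block or join a block of $D$; the joining is governed precisely by the two-bump formula \eqref{eq:2bump-intro} and the three-bump formula \eqref{eq:triple-bump-intro}. The normal-ordered operator $\CT'_k$ of \eqref{ppdd} carries negative descendents $\tau_{-1}(\pt), \tau_{-2}(\pt)$ for small $k$, and $\mathrm{B}^{k+1}$ is a pure Poincar\'e pairing; applying the restrictions $\tau_{-2}(\pt)=1$ and $\tau_{-1}(\pt)=0$ --- equivalently the evaluations \eqref{ActionVacuum} --- collapses these to exactly the pairing contributions produced on the stable pairs side. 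Here the removal of the descendents of $1$ in passing from $\rmL_k^\GW$ to $\widetilde{\rmL}_k^\GW$ is essential, since the discarded piece $\frac{(\imath u)^2}{2}\CT_k^0$ of \eqref{ffggyy} is precisely the string-equation term.

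The main obstacle I anticipate is the bookkeeping in the two-bump case: the $\max$-weighted sum in \eqref{eq:2bump-intro} must be reconciled with the coefficients $(-1)^{m+1}[2-m-d_L]_j^k$ of \eqref{ppdd}, and the factorials carried by $\CT_k$ must line up with the $1/k_i!$ prefactors of the correspondence and with the $(\imath u)$-powers. A further subtlety is that the two $c_1$-descendents inserted by $\CT_k$ need not be essential, so the plain formulas \eqref{eq:decay-intro}--\eqref{eq:triple-bump-intro} do not apply to them directly and must be supplemented by the non-essential reactions. I expect that, as the paper's own outline indicates, these last term comparisons do not close purely within the $k \le 0$ analysis and instead feed into the four-step bumping computation organized by the filtration $\bbD_\PT^k \cap \bbD_\PT^{X\bigstar}$; accordingly I would defer the final identifications to the proof of Theorem \ref{thm:intertw}. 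Once the derivation and quadratic matches are assembled and the negative point descendents are evaluated, both sides agree up to the stated factor $(\imath u)^{-k}$, completing the cases $k=-1,0$.
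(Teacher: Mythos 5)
Your plan has a structural gap: the two sub-identities you propose to prove separately are individually false, so the decomposition into ``derivation part matches derivation part'' and ``quadratic part matches quadratic part'' cannot be carried out. The underlying reason is that the Gromov--Witten derivation $\rmR_k=\sum_j\rmR_k^j$ contains $c_1$-twisting components with no counterpart in the stable pairs derivation, and $\Cor^\circ$ itself mixes $c_1$-twists; the mismatch is repaired only by cross-terms in which the quadratic/constant part of $\rmL_k^\PT$ bumps into the insertions of $D$. Concretely, take $k=-1$ and $D=\tch_{m+2}(\gamma)$ with $\gamma\in H^4(X)$, $m\ge 2$. Using \eqref{eq:decay-intro} and the inversion \eqref{eq:inv_tau2a}, the $\tau$-linear part of $\Cor^\circ(\rmR_{-1}(D))=\Cor^\circ(\tch_{m+1}(\gamma))$ is
\[
(\imath u)^{1-m}\Bigl(\tau_{m-1}(\gamma)+\bigl(\textstyle\sum_{i=1}^{m-1}\tfrac1i\bigr)\tau_{m-2}(\gamma\cdot c_1)\Bigr),
\]
whereas the $\tau$-linear part of $\imath u\,\rmR_{-1}(\Cor^\circ(D))$ is
\[
(\imath u)^{1-m}\Bigl(\tau_{m-1}(\gamma)+\bigl(\textstyle\sum_{i=1}^{m}\tfrac1i\bigr)\tau_{m-2}(\gamma\cdot c_1)\Bigr).
\]
The discrepancy $\tfrac1m(\imath u)^{1-m}\tau_{m-2}(\gamma\cdot c_1)$ is nonzero, so your target sub-identity $\Cor^\bullet\circ\rmR_k^\PT=(\imath u)^{-k}\,\rmR_k\circ\Cor^\bullet$ fails; consequently the quadratic-only matching fails as well. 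In the paper's proof this discrepancy is canceled precisely by the exceptional bumping \eqref{eq:2bump-minus-1-factorial} of the formal symbol $(-1)!\,\ch_1(c_1)$ --- which is part of the constant term of $\rmL_{-1}^\PT$ --- against $\tch_{m+2}(\gamma)$.

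The same cross-matching is forced at $k=0$: after the trivial cancellation of the degree-scaling operators $\rmR_0$ (stable pairs side) and $\rmR_0^0$ (Gromov--Witten side), what remains is \eqref{eq:diff-short}; that is, the GW-derivation component $\rmR_0^1(\Cor^\circ(\tch_{m+2}(\gamma)))$ is canceled not by any stable pairs derivation term but by the bumping term $\Cor^\circ(\tch_2(c_1)\tch_{m+2}(\gamma))$ arising from the quadratic part of $\rmL_0^\PT$. So the correct organization --- and the one the paper uses --- is to expand the full difference $\Cor^\bullet\circ\rmL_k^\PT(D)-(\imath u)^{-k}\widetilde{\rmL}_k^\GW\circ\Cor^\bullet(D)$ at once, grade it by $\tau$-degree and by the cohomological degree of the arguments, and verify cancellations between PT-bumping terms on one side and GW-derivation and $\mathrm{B}$-terms on the other. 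Your deferral of the two- and three-insertion cases to the bumping propositions behind Theorem \ref{thm:intertw} is consistent with the paper's own structure, but the separate derivation-only and quadratic-only identities you would feed into that machinery do not exist.
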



\begin{theorem}\label{thm:intertw} For all \(k\ge 1\) and
  \(D\in \mathbb{D}^{X\bigstar}_{\PT}\), we have 
  \[\Cor^\bullet\circ \rmL_k^\PT(D)=(\imath u)^{-k}\, \widetilde{\rmL}_k^\GW\circ \Cor^\bullet(D)\, \]
  after the restrictions
  $\tau_{-2}(\pt)=1$ and $\tau_{-1}(\gamma)=0$ for \(\gamma\in H^{>2}(X)\).
\end{theorem}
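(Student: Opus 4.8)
The plan is to prove the operator identity one monomial at a time and to induct along the bumping filtration \eqref{nn33}. Both $\Cor^\bullet\circ\rmL_k^\PT$ and $(\imath u)^{-k}\widetilde{\rmL}_k^\GW\circ\Cor^\bullet$ are linear, so it suffices to treat a single monomial $D=\prod_{i=1}^m\tch_{k_i}(\gamma_i)\in\mathbb{D}^{X\bigstar}_\PT$. The decisive structural fact is that, on $\mathbb{D}^{X\bigstar}_\PT$, the filtration \eqref{nn33} terminates at level $3$: any product of four classes in $H^{>0}(X)$ has complex degree at least $4>\dim_{\mathbb{C}}X$ and hence vanishes, so $\bbD^3_\PT\cap\bbD_\PT^{X\bigstar}=\bbD_\PT^{X\bigstar}$. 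This matches the fact that the connected correlators $\Cor^\circ$ supplied by the correspondence are nonzero only on blocks of size $1$, $2$, or $3$, given explicitly by \eqref{eq:decay-intro}--\eqref{eq:triple-bump-intro}. I would therefore verify the identity level by level, in the four steps $\mathsf{0}$--$\mathsf{3}$ of the restricted filtration.

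The mechanism is to separate $\rmL_k^\PT=\CT_k+\rmR_k$ into its derivation and quadratic parts and track each under $\Cor^\bullet$. The derivation $\rmR_k$ raises the index of a single descendent; after $\Cor^\bullet$ it must reproduce the Gromov--Witten raising operator $\rmR_k=\sum_{j=0}^3\rmR_k^j$, whose $c_1^j$-insertions arise from two sources at once---the operator $\rmR_k^j$ itself and the internal $c_1$- and $c_1^2$-corrections already built into the single-block formula \eqref{eq:decay-intro}. Reconciling the polynomial coefficient $\prod_{n=0}^k(i+d-3+n)$ of $\rmR_k^\PT$ with the elementary-symmetric coefficients $[x]^k_j=e_{k+1-j}(x,\dots,x+k)$ of $\rmR_k^\GW$, after the index shift induced by $\Cor^\circ$, is the content of the single-descendent matching. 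The quadratic term $\CT_k$, which contributes a pair of renormalized descendents along the K\"unneth decomposition of $c_1\cdot\Delta$, must on the Gromov--Witten side reproduce both $\CT'_k$ (when the created pair only partially bumps the existing insertions) and the pure pairing term $\tfrac{u^{-2}}{2}\mathrm{B}^{k+1}$ (when it contracts completely). The global prefactor $(\imath u)^{-k}$ is forced by the leading behaviour $\Cor^\circ(\tch_{k+2}(\gamma))=\tfrac{1}{(k+1)!}\mathfrak{a}_{k+1}(\gamma)+O((\imath u)^{-1})$ against $\tau_k(\gamma)=\tfrac{(\imath u)^k}{(k+1)!}\mathfrak{a}_{k+1}(\gamma)+\cdots$ of \eqref{eq:tau2a}.

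Carrying this out, Step $\mathsf{0}$ ($D=1$) leaves only $\CT_k$ and $\mathrm{B}^{k+1}$, which create a descendent of $\mathsf{p}$; this is the constant-term identity of Proposition~\ref{prop:CTT}. Step $\mathsf{1}$ ($D=\tch_{k_1}(\gamma)$, all pairwise products trivial) combines the raising of the single descendent with the single bump of the $\CT_k$-created pair via \eqref{eq:decay-intro}, as in Section~\ref{mono9}. Step $\mathsf{2}$ introduces the double bump governed by the $\max$-weighted formula \eqref{eq:2bump-intro} and is settled by Propositions~\ref{prop:double-bump1} and \ref{prop:double-bump2}. Step $\mathsf{3}$ uses the triple-bump formula \eqref{eq:triple-bump-intro} and Proposition~\ref{prop:triple-bump}. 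At each step the exceptional conventions of Section~\ref{virbra}---the formal symbols $(-1)!\,\ch_1(c_1)$ and $(-2)!\,\ch_0(c_1)$ that vanish inside the bracket---and the restrictions $\tau_{-2}(\pt)=1$, $\tau_{-1}(\gamma)=0$ for $\gamma\in H^{>2}(X)$ must be tracked so that the negative-factorial contributions cancel as asserted.

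The main obstacle will be Steps $\mathsf{2}$ and $\mathsf{3}$. The difficulty is combinatorial rather than conceptual: matching the weight $\max(\max(k_1,k_2),\max(\mu_1+1,\mu_2+1))$ in \eqref{eq:2bump-intro} and the factor $|k|$ in \eqref{eq:triple-bump-intro} against the output of $\widetilde{\rmL}_k^\GW$ demands exact bookkeeping of the Pochhammer/Getzler coefficients of \eqref{eq:Getzler}, the symmetric functions $[x]^k_j$, and the factorial and sign conventions. I expect the cleanest route is to pass entirely to the renormalized variables $\mathfrak{a}_n$, in which the Heisenberg relations \eqref{pp23pp} trivialize all contractions, and then to check that after normal ordering the two sides agree coefficient by coefficient in each $\mathfrak{a}$-monomial and each power of $\imath u$.
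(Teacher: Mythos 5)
Your proposal is correct and follows essentially the same route as the paper's proof: the same truncation of the bumping filtration at level three, the same splitting $\rmL_k^\PT=\CT_k+\rmR_k$, and the same four-step verification via the constant-term identity (Proposition \ref{prop:CTT}), the single-insertion computations (Propositions \ref{prop:shiftpt}, \ref{prop:no-bump} and Section \ref{mono9}), and the double/triple-bump Propositions \ref{prop:double-bump1}, \ref{prop:double-bump2} and \ref{prop:triple-bump}, all carried out in the renormalized variables $\fra_n$ and assembled by the set-partition expansion of $\Cor^\bullet$ into $\Cor^\circ$-blocks of size at most three. The only slight inaccuracy is your suggestion that $\CT_k$ must reproduce the pairing term $\frac{u^{-2}}{2}\mathrm{B}^{k+1}$: for $k\ge 1$ this term acts as zero on the stationary sector (it only matters in the $k=-1,0$ cases of Proposition \ref{PPP111}), but this does not affect the argument.
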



The evaluations of left sides of the equalities in Proposition \ref{PPP111} and
Theorem \ref{thm:intertw} require a slight generalization of the formulas
\eqref{eq:decay-intro}-\eqref{eq:triple-bump-intro} which govern the descendent correspondence on
$\mathbb{D}^{X\bigstar}_\PT$. Additional rules are required
for
\begin{equation}\label{fpp23}
  \tch_0(\gamma),  \tch_1(\gamma)\ {\text{for $\gamma \in H^{>0}(X)$}}
  \ \ {\text{and}}\ \ 
  \  \tch_2(\delta)\ {\text{for  $\delta \in H^2(X)$.}}
  \end{equation}

The required rules take a very simple form since
$\rmL_k^\PT(D)$  
is at most linear{\footnote{$\rmL_1^\PT(D)$ has a single quadratic
    term in the classes \eqref{fpp23} given by $\tch_1(\mathsf{p})
    \tch_2(c_1)$ which causes no difficulty since $\tch_1(\mathsf{p})$
  does not interact.}}
in the classes \eqref{fpp23} over $\mathbb{D}^{X\bigstar}_\GW$:
\begin{equation}\label{eq:except-bump}
  \Cor^\circ(\tch_0(\gamma)) = -\int_X\gamma\, , \ \ \ \ \
  \Cor^\circ(\tch_0(\gamma)M)=0\, ,
\end{equation}

\begin{equation*}
  \Cor^\circ(\tch_1(\gamma)) =  0 \, , \ \ \ \ \ \ \ \ \ \ \ \ 
  \Cor^\circ(\tch_1(\gamma)M) = 0\, , 
\end{equation*}
where $M\in \mathbb{D}^{X\bigstar}_\PT$.
For $\Cor^\circ(\tch_2({\delta})M)$ with $M\in \mathbb{D}^{X\bigstar}_\PT$,
formulas \eqref{eq:decay-intro}-\eqref{eq:triple-bump-intro} apply unchanged.
The above rules are compatible with the $\GW/\PT$ descendent correspondence
and will be established in Section \ref{sec:residue-computation}.


The restrictions $\tau_{-2}(\pt)=1$ and
\(\tau_{-1}(\pt)=0\)  in Proposition \ref{PPP111} are
well-defined since both  $\Cor^\bullet\circ \rmL_k^\PT(D)$
and $\widetilde{\rmL}_k^\GW\circ \Cor^\bullet(D)$, \(k=0,-1\) will be seen to lie in 
the commutative algebra generated by $\tau_{-2}(\pt)$, \(\tau_{-1}(\gamma)\), and
$\bbD_\GW^{X+}$.  The commutation with
 \(\tau_{-2}(p)\) and \(\tau_{-1}(\pt)\) follows from
\eqref{pp23pp}.

Similarly, the restrictions  $\tau_{-2}(\pt)=1$ and
\(\tau_{-1}(\gamma)=0\) for \(\gamma\in H^{>2}(X)\)  in Theorem \ref{thm:intertw} are well-defined since both  $\Cor^\bullet\circ \rmL_k^\PT(D)$
and $\widetilde{\rmL}_k^\GW\circ \Cor^\bullet(D)$, \(k>0\) will be seen to lie in 
the commutative algebra generated by $\tau_{-2}(\pt)$, \(\tau_{-1}(\gamma)\), and
$\bbD_\GW^{X\bigstar}$. The algebra \(\bbD_{\GW}^{X\bigstar}\) is generated
by the {\it essential descendents}
$$\big\{ \, \tau_i(\gamma)\,  | \, (i\geq 0, \gamma\in H^{>0}(X,\mathbb{Q}))\ 
   \text{or}\  (i=0, \gamma\in H^{>2}(X,\mathbb{Q}))\, \big\}\, .$$
Again, commutation follows from \eqref{pp23pp}.


\subsection{Conventions for $(-1)!\ch_1(c_1)$}
In order to complete the definitions of
the left sides of Proposition \ref{PPP111}
and Theorem \ref{thm:intertw}, we must also include the term
$(-1)! \ch_1(c_1)$ in the descendent correspondence $\Cor^\bullet$ since
such terms occur in $\rmL_k^\PT$.

\vspace{8pt}
\noindent $\bullet$ The first case is
\[\Cor^\circ((-1)!\mathsf{ch}_1(c_1))=0\, .\]

\vspace{5pt}
\noindent $\bullet$
The non-vanishing  bumping term is given by 
 \begin{multline}
    \label{eq:2bump-minus-1-factorial}
    \Cor^\circ\Big((-1)!\ch_{1}(c_1)\tch_{k_1+2}(\gamma)\Big)
=-\frac{(\imath u)^{-1}}{k_1!}\Bigg(\fra_{k_1-1}(c_1\gamma) 
+(\imath u)^{-1}\fra_{k_1-2}(c_1\gamma\cdot c_1)
\\+(\imath u)^{-1}k_1
    \sum_{|\mu|=k_1-3}\frac{\fra_{\mu_1}\fra_{\mu_2}}{\text{Aut}(\mu)}(c_1\gamma\cdot c_1)\Bigg)\, ,
  \end{multline}
  where \(k_1\ge 2\).
\vspace{8pt}

\noindent $\bullet$ The higher bumping term is 
\[ \Cor^\circ((-1)! \ch_1(c_1)\tch_{k_1+2}(\gamma)\tch_{k_2+2}(\gamma'))=\frac{(\imath u)^{-2}(k_1+k_2-1)}{k_1!k_2!}\fra_{k_1+k_2-2}(c_1\gamma\gamma')\, ,\]
\(k_1,k_2\ge 0, k_1+k_2>1\).
There is also an exceptional higher bumping term
\[ \Cor^\circ((-1)! \ch_1(c_1)\tch_{2}(\gamma)\tch_3(\gamma'))
  =\tau_{-2}(c_1\gamma\gamma')\, .\]

\subsection{Proof of Proposition \ref{PPP111} }  \label{ppff11}

  The cases  \(k=-1,0\) are special in two ways:
\begin{enumerate}
\item[(i)]  We must  use the exceptional cases of the operator \(\Cor^\circ\),
  in the analysis for $k=-1,0$.
 \item[(ii)] While  the operator
  \(\widetilde{\rmL}_{k}^\GW\) for \(k=-1,0\) has quadratic part
  $\frac{u^{-2}}{2}B^{k+1}$,
   \(\widetilde{\rmL}_k^\GW\) is a first order operator
   acting on the stationary sector of descendent algebra for $k>0$.
\end{enumerate}
   For these reasons, we treat the $k=-1,0$ cases separately here.

   The restrictions in the statement of Proposition \ref{PPP111} 
   allow us   freely use
\begin{equation}\label{bb55}
  \mathsf{ch}_0(\mathsf{p})=-1\, ,
  \end{equation}
   which is compatible with $\Cor^\bullet$. Similarly, we can use
   \begin{equation}  \mathsf{ch}_1(\mathsf{p})=0\, . \label{bb66}
     \end{equation}

     
 Let us write down the corresponding operators explicitly:
    \[\rmL_{-1}^{\PT}=\rmR_{-1}-(-1)!\, \mathsf{\ch}_1(c_1),\quad \widetilde{\rmL}_{-1}^\GW=\rmR_{-1}+\frac{u^{-2}}{2} \rmbb^0\, .\]
    \[\rmL_{0}^{\PT}=\rmR_0-\widetilde{\mathsf{ch}}_2(c_1)-\frac{1}{2}
   \mathsf{ch}_1\mathsf{ch}_1(c_1),\quad \widetilde{\rmL}_0^{\GW}=\rmR_0+\frac{u^{-2}}{2}\rmbb^1-\tau_0(c_1)-\frac{1}{24}\int_Xc_1c_2\, .\]
 We have used \eqref{bb55} for $\rmL_{-1}^{\PT}$. For
 ${\rmL}_0^{\PT}$, only the $d_L=d_R=2$ summand is
 nonzero by \eqref{bb66}.

 \vspace{8pt}
 \noindent{\bf{Step 1. }}  We check the statement for \(D=1\).
\vspace{8pt}

 The left side of
 the equality of Proposition  \ref{PPP111} for \(k=-1\) is
    \[\Cor^\bullet(\rmL_{-1}^\PT(D))=-\Cor^\bullet((-1)!\mathsf{ch}_1(c_1))=0\, .\]
    The right side of the equality,
   \[ \imath u\,  \widetilde{\rmL}_{-1}^\GW(\Cor^\bullet(1))=
    \imath u\, \widetilde{\rmL}_{-1}^\GW(1)=0\, , \]
    matches. 
 For \(k=0\), the left side for $D=1$ is
    \[\Cor^\bullet(\rmL_{0}^\PT(1))=-\Cor^\bullet(\widetilde{\mathsf{ch}}_2(c_1))=-\fra_1(c_1)=-\tau_0(c_1)-\frac{1}{24}\int_X c_1c_2\, .\]
   The right side,
   \[\widetilde{\rmL}_0^\GW(\Cor^\bullet(1))=\widetilde{\rmL}_0^\GW(1)=-\tau_0(c_1)-\frac{1}{24}\int_X c_1c_2\, ,\]
   matches.

   \vspace{8pt}
   \noindent{\bf{Step 2. }}
   We check the statement for \(D=\widetilde{\mathsf{ch}}_{k+2}(\gamma)\) with $k\geq 0$.
\vspace{8pt}

   We
must expand both sides of the
    equality of Proposition  \ref{PPP111}
    in terms of \(\tau\). The following formula will be used:
    \begin{multline}\label{eq:Cor-m1}
      (\imath u)^k \Cor^\circ(\widetilde{\mathsf{ch}}_{k+2}(\gamma))=
      \tau_k(\gamma)
      +\left(\sum_{i=1}^k\frac{1}{i}\right)\tau_{k-1}(\gamma\cdot c_1)+
      \left(\sum_{1\le i<j\le k}\frac{1}{ij}\right)\tau_{k-2}(\gamma\cdot c_1^2)
     \\ +\sum_{|\mu|=k-1}\frac{\mu_1!\mu_2!}{\text{Aut}(\mu) k!}\bigg(\tau_{\mu_1-1}\tau_{\mu_2-1}(\gamma\cdot c_1)+
     \big(\sum_{i=1}^{\mu_1-1}\frac{1}{i}\big)\tau_{\mu_1-2}(\gamma \cdot c_1^2)\tau_{\mu_2-1}(\pt)+\big(\sum_{i=1}^{\mu_2-1}\frac{1}{i}\big)\tau_{\mu_1-1}(\pt)\tau_{\mu_2-2}(\gamma\cdot c_1^2)\bigg)\\
     +\sum_{|\mu|=k-2}\frac{\mu_1!\mu_2!}{\text{Aut}(\mu)k!}\tau_{\mu_1-1}\tau_{\mu_2-1}(\gamma\cdot c_1^2) +\sum_{|\mu|=k-3}\frac{\mu_1!\mu_2!\mu_3!}{\text{Aut}(\mu) (k-1)!}\tau_{\mu_1-1}\tau_{\mu_2-1}\tau_{\mu_3-1}(\gamma\cdot c_1^2)\, .
      \end{multline}

      We split the analysis of the difference for 
      \begin{equation}
        \label{eq:diff-Lm1:1}
      \Cor^\bullet\circ \rmL_{-1}^\PT(D)- \imath u\, 
      \widetilde{\rmL}_{-1}^\GW\circ \Cor^\bullet(D)   
    \end{equation}
    in stages according to the \(\tau\) degree of terms.
    The second term of the difference is
    simpler since
  $$  \imath u\, 
      \widetilde{\rmL}_{-1}^\GW\circ \Cor^\bullet(D)    =
  \imath u\,   \rmR_{-1}(\Cor^\circ(\widetilde{\mathsf{ch}}_k(\gamma)))\, $$
  and the latter is a easy modification of \eqref{eq:Cor-m1}.
  The  first term is more involved since there are two parts:
  the action of \(\rmR_{-1}\) and the interaction with
  \((-1)!\mathsf{ch}_1(c_1)\).

  \vspace{8pt}
\noindent $\bullet$    We first  study the \(\tau\) linear terms of \((\imath u)^{k-1}\Cor^\bullet\circ \rmL_{-1}^\PT(D)\):
    \begin{multline*}
      \bigg( \tau_{k-1}(\gamma)+\Big(\sum_{i=1}^{k-1}\frac{1}{i}\Big)\tau_{k-2}(\gamma\cdot c_1) +\Big( \sum_{1\le i<j\le k-1}\frac{1}{ij}\Big)\tau_{k-3}(\gamma\cdot c_1^2)\bigg) 
      \\
      +\bigg( \frac{(\imath u)^{k-2}}{k!}\fra_{k-1}(\gamma\cdot c_1)+\frac{(\imath u)^{k-3}}{k!}\fra_{k-2}(\gamma\cdot c_1^2)\bigg)=
      \\      \bigg( \tau_{k-1}(\gamma)+\Big(\sum_{i=1}^{k-1}\frac{1}{i}\Big)\tau_{k-2}(\gamma\cdot c_1) +\Big( \sum_{1\le i<j\le k-1}\frac{1}{ij}\Big)\tau_{k-3}(\gamma\cdot c_1^2)\bigg)
      \\
      +\frac{1}{k}\bigg(\tau_{k-2}(\gamma\cdot c_1) +\Big(\sum_{i=1}^{k-2}\frac{1}{i}\Big)\tau_{k-3}(\gamma\cdot c_1^2)\bigg)+
      \frac{1}{k(k-1)}\tau_{k-3}(\gamma\cdot c_1^2)\, .
    \end{multline*}
    We have used here  bumping with \((-1)!\mathsf{ch}_1(c_1)\) from
\eqref{eq:2bump-minus-1-factorial}
  to obtain the expression in the second line
  and  an inversion{\footnote{See \eqref{eq:inv_tau2a}  for the
      full formula for the inversion.}}
  of \eqref{eq:tau2a} to justify the second equality.
  After collecting together the coefficients
  in front of the \(\tau\)'s in the last expression,
  we obtain \(R_{-1}(\Cor^\circ(\widetilde{\mathsf{ch}}_k(\gamma)))\),
  exactly as expected.

  \vspace{8pt}
\noindent $\bullet$  We study next the \(\tau\)-quadratic term of \eqref{eq:diff-Lm1:1}.
  Consider first the terms that have a co-product
    \((\gamma\cdot c_1)^L_i\otimes (\gamma\cdot c_1)^R_i\) as argument. Bumping with \((-1)!\mathsf{ch}_1(c_1)\) does not produce
    such terms -- only the terms of the second line of \eqref{eq:Cor-m1} contributes to the terms of \eqref{eq:diff-Lm1:1}.
    These terms cancel  exactly.

    \vspace{8pt}
\noindent $\bullet$
    The \(\tau\)-quadratic terms of difference \eqref{eq:diff-Lm1:1} with argument     \((\gamma\cdot c_1^2)^L_i\otimes
    (\gamma\cdot c_1^2)^R_i\) are slightly more involved. The  second term of the difference has terms:
    \begin{multline*}
      \sum_{|\mu|=k-2}\frac{\mu_1!\mu_2!}{\text{Aut}(\mu)(k-1)!}\bigg(\Big(\sum_{i=1}^{\mu_1-1}\frac1i\Big)\tau_{\mu_1-2}(\gamma\cdot c_1^2)\tau_{\mu_2-1}(\pt)
      +\Big(\sum_{i=1}^{\mu_2-1}\frac1i\Big)\tau_{\mu_1-1}(\pt)\tau_{\mu_2-2}(\gamma\cdot c_1^2)\bigg)\\
      +\sum_{|\mu|=k-3} \frac{\mu_1!\mu_2!}{\text{Aut}(\mu)(k-1)!}\tau_{\mu_1-1}\tau_{\mu_2-1}(\gamma\cdot c_1^2)\, ,
    \end{multline*}
    where the term on the second line is a result of bumping with \((-1)!\mathsf{ch}_1(c_1)\). After simplifying
   the last expression,
    we obtain the corresponding \(\tau\)-quadratic term of \(R_{-1}(\Cor^\circ(\widetilde{\mathsf{ch}}_{k+2}(\gamma)))\) as expected.

    \vspace{8pt}
 \noindent $\bullet$ The last step is to analyze the \(\tau\)-cubic terms of the difference \eqref{eq:diff-Lm1:1}. Since bumping
 with \(\mathsf{ch}_1(c_1)\) is trivial,
 the terms match exactly.
 \vspace{8pt}

    Similarly, we must analyze the difference
    \begin{equation} \label{vv44l}
      \Cor^\bullet\circ \rmL_{0}^\PT(D)-
      \widetilde{\rmL}_{0}^\GW\circ \Cor^\bullet(D)\, .   
    \end{equation}
    Since both \(\rmR_0\) on the stable pairs side and \(\rmR_0^0\) on the
    Gromov-Witten side scale the descendents by the  complex cohomological degree,
    the difference \eqref{vv44l} is equal{\footnote{Note both $\rmR_0^2$
        and $\rmR_0^3$ are 0.}} to
    \begin{equation}
        \label{eq:diff-L0:1}
-\Cor^\bullet\left((\widetilde{\mathsf{ch}}_2+\mathsf{ch}_1^2/2)(c_1)\cdot D\right)-\bigg(\rmR_0^1+\frac{u^{-2}}2\mathrm{B}^1-\tau_0(c_1)-\frac{1}{24}\int_Xc_1c_2\bigg)\circ\Cor^\bullet(D)\, .
     \end{equation}
     If \(D=\widetilde{\mathsf{ch}}_{k+2}(\gamma)\) then \(\mathrm{B}^1\circ\Cor^\bullet(D)=0\). We have already proved that the
     difference vanishes for \(D=1\). Since \[\Cor^\bullet(\mathsf{ch}_1\mathsf{ch}_1(c_1)\tch_{k+2}(\gamma))=0\, ,\] the difference \eqref{eq:diff-L0:1} is
     equal to
     \begin{equation}\label{eq:diff-short}
    -\Cor^\circ(\widetilde{\mathsf{ch}}_2(c_1)\widetilde{\mathsf{ch}}_{k+2}(\gamma))
       -\rmR^1_0(\Cor^\circ(\widetilde{\mathsf{ch}}_{k+2}(\gamma)))\, .\end{equation}
     Comparing formulas  \eqref{eq:decay-intro} and
     \eqref{eq:2bump-intro}, we conclude that the latter difference vanishes.

     Indeed, let us expand both terms of \eqref{eq:diff-short}. First,
     \begin{multline*}
       \Cor^\circ(\tch_2(c_1)\tch_{k+2}(\gamma))=-\frac{(\imath u)^{-1}}{k!}\fra_k(\gamma\cdot c_1)-\frac{(\imath u)^{-2}}{k!}\fra_{k-1}(\gamma\cdot c_1^2)
       -\frac{(\imath u)^{-2}}{(k-1)!}\sum_{|\mu|=k-2}\frac{\fra_{\mu_1}\fra_{\mu_2}}{\Aut(\mu)}(\gamma \cdot c_1^2)\\=
       -(\imath u)^{-k}\Bigg(\tau_{k-1}(\gamma\cdot c_1)+\left(\sum_{i=1}^{k-1}\frac{1}{i}\right)\tau_{k-2}(\gamma\cdot c_1^2)\Bigg)-\frac{(\imath u)^{-k}}{k}\tau_{k-2}(\gamma \cdot c_1^2)\\-\frac{(\imath u)^{-k+2}}{(k-1)!}
       \sum_{|\mu|=k-2}\frac{\mu_1!\mu_2!}{\Aut(\mu)}\tau_{\mu_1-1}\tau_{\mu_2-1}(\gamma\cdot c_1^2)\, .
     \end{multline*}
On the other hand,
     \begin{multline*}
       \Cor^\circ(\tch_{k+2}(\gamma))=\frac{1}{(k+1)!}\fra_{k+1}(\gamma)+
       \frac{(\imath u)^{-1}}{k!}\sum_{|\mu|=k-1}\frac{\fra_{\mu_1}\fra_{\mu_2}}{\Aut(\mu)}(\gamma\cdot c_1)+\dots\\
 =      (\imath u)^{-k}\Bigg(\tau_k(\gamma)+\left(\sum_{i=1}^k\frac{1}{i}\right)\tau_{k-1}(\gamma \cdot c_1)\Bigg)+\frac{(\imath u)^{-k+2}}{k!}\sum_{|\mu|=k-1}\frac{\mu_1!\mu_2!}{\Aut(\mu)}
       \tau_{\mu_1-1}\tau_{\mu_2-1}(\gamma \cdot c_1)+\dots\, ,
     \end{multline*}
     where we have used dots to stand for the
     terms that are of complex cohomological degree \(3\).
     Since
     \[\rmR_0^1(\tau_k(\gamma))=\tau_{k-1}(\gamma\cdot  c_1)\, ,\]
     all the omitted terms are annihilated by \(\rmR_0^1\). The remaining 
     terms of the difference \eqref{eq:diff-short} cancel. 
     

 \vspace{8pt}
   \noindent{\bf{Step 3. }}
   We check the statement for \(D=\tch_{k_1+2}(\gamma_1)\tch_{k_2+2}(\gamma_2)\) with $k_i\geq 0$.
\vspace{8pt}
     
     We start with the
     difference \eqref{eq:diff-Lm1:1}:
     \begin{multline}
       \label{eq:diff-Lm1:2}
      \Cor^\circ(\rmR_{-1}(\tch_{k_1+2}(\gamma_1)\tch_{k_2+2}(\gamma_2)))-\Cor^\circ((-1)!\tch_1(c_1)\tch_{k_1+2}(\gamma_1)\tch_{k_2+2}(\gamma_2))\\ -(\imath u)\rmR_{-1}(\Cor^\circ(\tch_{k_1+2}(\gamma_1)\tch_{k_2+2}(\gamma_2)))
       -(\imath u)\frac{u^{-2}}{2}\mathrm{B}^0(\Cor^\circ(\tch_{k_1+2}(\gamma_1)),\Cor^\circ(\tch_{k_2+2}(\gamma_2))))\, .
     \end{multline}
     Vanishing of the last expression follows from Proposition
     \ref{prop:double-bump1} and Proposition~\ref{prop:double-bump2}.

     The difference \eqref{vv44l} as above is equivalent
     to \eqref{eq:diff-L0:1}.
 Since we have already shown the vanishing for \(D=1\) and \(D=\tch_{k+2}(\gamma)\), we need only to check the vanishing
     of
\begin{multline}\label{ttt556}
  -\Cor^\circ(\tch_2(c_1)D)-\frac{1}{2}\Cor^\bullet(\mathsf{ch}_1\mathsf{ch}_1(c_1)D)-R_0^1(\Cor^\circ(D))\\
  -\frac{u^{-2}}{2}\mathrm{B}^1(\Cor^\circ(\tch_{k_1+2}(\gamma_1)),\Cor^\circ(\tch_{k_2+2}(\gamma_2)))\, .
\end{multline}
   The vanishing
   follows from Propositions \ref{prop:double-bump1} and \ref{prop:double-bump2}.
%

 \vspace{8pt}
   \noindent{\bf{Step 4. }}
   We check the statement for
\(D=\tch_{k_1+2}(\gamma_1)\tch_{k_2+2}(\gamma_2)\tch_{k_3+2}(\gamma_3)\) with $k_i\geq 0$.
 
\vspace{8pt}

The result follows immediately from the triple bumping
relation \eqref{eq:triple-bump} which holds in complete generality. No special cases require extra attention.
\qed

\subsection{Proof of Theorem \ref{thm:Vir-toric}} \label{jjjj9}
The vanishings 
\begin{equation}\label{jj823}
\langle \calL_{-1}^\PT(D)\rangle^{X,\PT}_\beta=0\ \ \ \ \text{and}
\ \ \ \ \langle \calL_0^\PT(D)\rangle^{X,\PT}_\beta=0
\end{equation}
are simple to prove for all \(D\in \mathbb{D}_{\PT}^{X}\).
For $$\calL_{-1}^\PT= \rmR_{-1} + \rmR_{-1}\ch_0(\mathsf{p})\, ,$$
the vanishing \eqref{jj823}
is immediate from the definition of $\rmR_{-1}$ and \eqref{m449}.
For $$\calL_{0} =  \rmR_0-{\tch}_2(c_1) -\frac{1}{2} \ch_1\ch_1(c_1)+ \rmR_{-1} \ch_1(\mathsf{p})\, $$
the vanishing \eqref{jj823} follows from the definition of $\rmR_0$,
the virtual dimension constraints, and the divisor equation:
\begin{equation*}
 \Big\langle {\ch_2(c_1) \, \mathsf{ch}}_{k_1}(\gamma_1)\cdots \mathsf{ch}_{k_m}(\gamma_m)\Big\rangle_{\beta}^{X,\PT}
 \, =\, \int_\beta c_1 \cdot \Big\langle { \mathsf{ch}}_{k_1}(\gamma_1)\cdots \mathsf{ch}_{k_m}(\gamma_m)\Big\rangle_{\beta}^{X,\PT}\, .
\end{equation*}

We now assume $k\geq 1$.
Using the intertwining property of 
Theorem \ref{thm:intertw},
  the stationary $\GW/\PT$ correspondence of Theorem \ref{thm:cor-main}, and
  the Virasoro constraints in Gromov-Witten theory,
  we can prove the stationary Virasoro constraints for stable
  pairs in the toric case.

  Let \(D\in \mathbb{D}_{\PT}^{X+}\), so $D$ is a monomial in the operators
   $$\big\{ \, \tch_i(\gamma)\,  | \, i\geq 0,\  \gamma\in H^{>0}(X,\mathbb{Q})\ \big\} .$$
   The first step is to check by hand
   that the Virasoro constraints
\begin{equation}\label{vvvv}
  \Big\langle\calL^\PT_k(D)\Big\rangle^{X,\PT}_\beta=0\,
  \end{equation}
   of Theorem \ref{thm:Vir-toric} are compatible all with insertions
   of the form
\begin{equation}\label{exxx}
  \tch_0(\gamma),  \tch_1(\gamma)\ {\text{for $\gamma \in H^{>0}(X)$}}
  \ \ {\text{and}}\ \ 
  \  \tch_2(\delta)\ {\text{for  $\delta \in H^2(X)$.}}
  \end{equation}
   If any of the operators \eqref{exxx} appear in $D$,
   the Virasoro constraints \eqref{vvvv} are true if
   the Virasoro constraints are true for the monomial obtained
   by dividing $D$ by the occurring operators \eqref{exxx}.
   We can therefore
  reduce to the case where $D$ is a monomial in
the operators
  $$\big\{ \, \tch_i(\gamma)\,  | \, (i\geq 3, \gamma\in H^{>0}(X,\mathbb{Q}))\ 
  \text{\em or}\  (i=2, \gamma\in H^{>2}(X,\mathbb{Q}))\, \big\} .$$
  In other words, $D\in \mathbb{D}^{X\bigstar}_\PT$.

  The next step is to apply Theorem \ref{thm:cor-main}:
\begin{equation}
\label{lwlwlw}
(-q)^{d_\beta/2}\, \langle \calL_k^\PT(D)\rangle^{X,\PT}_\beta
=(-\imath u)^{d_\beta}\, \langle\Cor^\bullet(\calL_k^\PT(D))\rangle^{X,\GW}_\beta
\end{equation}
for all $k\geq 1$.
By the construction of the correspondence \cite{PPDC},
 the descendents of the point class do not interact with other descendents:
\begin{equation}\label{eq:Cpoint}\Cor^\bullet(\tch_{k+2}(\pt)D)=(\imath u)^{-k}\tau_k(\pt)\Cor^\bullet(D)\, ,
\end{equation}
for every \(D\in \mathbb{D}_{\PT}^{X\bigstar}\).

By combining \eqref{lwlwlw}, \eqref{eq:Cpoint},  and the intertwining statement of Theorem~\ref{thm:intertw},  we see
\begin{eqnarray*}
\langle\Cor^\bullet(\calL_k^\PT(D))\rangle^\GW_\beta &=&
\langle\Cor^\bullet(\rmL_k^\PT(D))\rangle^\GW_\beta + (k+1)!
\, \langle\Cor^\bullet(\rmL_{-1}^\PT(\ch_{k+1}(\pt) D))\rangle^\GW_\beta
\\
& = &
(\imath u)^{-k}\langle \widetilde{\rmL}_k^\GW(\Cor^\bullet(D))\rangle_\beta^\GW+
(\imath u)^{2-k}(k+1)!\, \langle \widetilde{\rmL}_{-1}^\GW(\tau_{k-1}(\pt)\Cor^\bullet(D))\rangle_\beta^\GW\\
&=&  (\imath u)^{-k}\langle\calL_k^\GW(\Cor^\bullet(D))\rangle^{\GW}_\beta\\
&=& 0\, ,\end{eqnarray*}
where the last equality is by Proposition \ref{ggkk17} which
may be applied since
$$\Cor^\bullet(D) \in  \mathbb{D}_{\GW}^{X+}\, $$
by Proposition \ref{666}.
We conclude $$\langle \calL_k^\PT(D)\rangle^{X,\PT}_\beta=0$$
as required.
\qed

\vspace{10pt}

We could have also used the intertwining property of Proposition \ref{PPP111}
to prove the stable pairs vanishings \eqref{jj823} for
\(D\in \mathbb{D}_{\PT}^{X+}\),
 but some additional care must be taken since
the insertions $\ch_0(\mathsf{p})$ and $\ch_1(\mathsf{p})$ which
occur in  the terms
$$(k+1)!
\, \langle\Cor^\bullet(\rmL_{-1}^\PT(\ch_{k+1}(\pt) D))\rangle^\GW_\beta$$
for $k=-1$ and $0$ are not covered by Proposition \ref{PPP111}. We leave the
details to the reader.

\section{Intertwining I: Basic case}
\label{sec:main-result}

\subsection{Overview}
After an explicit study of
various terms of the stationary Gromov-Witten Virasoro constraints  in Section
\ref{ltt},
we prove  
Theorem \ref{thm:intertw} in the basic case
\(D=1\) in Section \ref{sec:const-term-comp}.


\subsection{Leading term} \label{ltt}
We analyze here the stationary Virasoro constraints on the Gromov-Witten
side defined in Section \ref{sec:changes-variables-gw}.

The leading term \(\CT^1_k\) of \(\CT'_k\) is of the form
\[\frac{1}{2}\CT^1_k=\frac{k!}{u^2}\, \tau_k(c_1)+\frac{1}{2}\sum_{a+b=k-2}
(-1)^{d^L-1}(a+d^L-1)!(b+d^R-1)!\tau_a\tau_b(c_1)\, ,\]
where \(a,b\ge 0\) in the sum. By the following result,
the term \(\CT'_k\)   simplifies if we use the modified descendents \(\fra_i\).
\begin{proposition}
  \label{prop:CT} For all \(k\geq -1\),
  \[\CT_k'=-(\imath u)^{k-2}\sum_{a+b=k+2}(-1)^{d^Ld^R}(a+d^L-3)!(b+d^R-3)!\frac{\fra_{a-1}\fra_{b-1}(c_1)}{(a-1)!(b-1)!}\, ,\]
  where the sum over all \(a,b\ge 0\) and we use convention \(\fra_0=0,\, 
  \fra_{-1}/(-1)!=\tau_{-2}\).
\end{proposition}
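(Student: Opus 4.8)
The plan is to change variables from the Gromov--Witten descendents $\tau_n$ to Getzler's renormalized descendents $\fra_n$ directly inside the definition $\CT'_k=\CT^1_k+\CT^2_k+\CT^3_k$ of \eqref{ppdd}, and then to reorganize the result by total cohomological degree. The decisive structural input is that, for $\gamma\in H^{>0}(X)$, each $\tau_n(\gamma)$ is \emph{linear} in the $\fra_i$ by \eqref{eq:tau-01}--\eqref{eq:tau2a}, so substituting into the quadratic operators $\CT^j_k$ yields an expression that is again quadratic in the $\fra_i$, matching the shape of the asserted formula. First I would reindex \eqref{ppdd} by the left slot, writing $a=m-1$ so that the term becomes $(-1)^{a}\,[\,1-a-d_L\,]^{k}_{j}\,:\tau_{a}\tau_{k-j-1-a}(c_1^{j}):$, and then expand each $\tau$ via \eqref{eq:tau2a}: the leading piece $\tau_a(\theta)\mapsto \frac{(\imath u)^a}{(a+1)!}\fra_{a+1}(\theta)$, the subleading piece carrying an extra factor of $c_1$ and a harmonic sum $H_a=\sum_{i=1}^a \tfrac1i$, and the sub-subleading piece carrying $c_1^2$. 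Since the K\"unneth components of $c_1^{j}\Delta$ already carry $j$ powers of $c_1$, the contributions split cleanly by total cohomological degree $4,5,6$, and all degree $\ge 7$ terms vanish because $c_1^{4}=0$ on a $3$-fold.

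The heart of the computation is that the target is \emph{purely of degree $4$}: the right-hand side only involves the coproduct of $c_1\Delta$, so the degree $5$ and degree $6$ contributions on the left must cancel. I would treat the degree $4$ part first, where only the leading piece of $\CT^1_k$ survives. Its coefficient in front of $\fra_{A}\fra_{B}$ (with $A=a+1$, $B=k-1-a$, and legs of the $c_1\Delta$ coproduct of degrees $d^L,d^R$, so $d^L+d^R=4$) is $(-1)^a[\,1-a-d^L\,]^{k}_{1}\,\frac{(\imath u)^{k-2}}{A!\,B!}$. The key collapse is the evaluation of the elementary symmetric polynomial on a block of consecutive integers straddling $0$: with $p=a+d^L-1$ and $q=k-a+1-d^L$ (so $p+q=k$),
\[
[\,1-a-d^L\,]^{k}_{1}=e_{k}\big(-p,\,-p+1,\,\dots,\,0,\,\dots,\,q\big)=(-1)^{p}\,p!\,q!\,,
\]
since the only nonzero term of $e_k$ is the one omitting the factor $0$. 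Substituting $p!=(A+d^L-2)!$ and $q!=(B+d^R-2)!$ reproduces exactly the factorials of the statement, and the residual signs match because $(-1)^{d^L}=(-1)^{d^Ld^R}$ whenever $d^L+d^R=4$ (one checks $d^L(d^L-3)$ is even for $0\le d^L\le 4$).

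The main obstacle is the vanishing of the degree $5$ and degree $6$ parts. Here the subleading terms of $\CT^1_k$ (which carry the harmonic sums $H_a$) must cancel against the leading terms of $\CT^2_k$, and likewise at degree $6$ the sub-subleading terms of $\CT^1_k$, the subleading terms of $\CT^2_k$, and the leading term of $\CT^3_k$ must cancel together. This is exactly where the harmonic sums are forced to disappear, and the cancellation hinges on comparing the elementary symmetric weights $[x]^{k}_{1},[x]^{k}_{2},[x]^{k}_{3}$ of different degrees against the Pochhammer expansion $\tfrac{1}{(1+zc_1)_n}$ underlying \eqref{eq:Getzler}, whose $c_1$-expansion is the very origin of those harmonic sums. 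Organizing this at the level of the generating function \eqref{eq:Getzler}, rather than through the fully expanded forms, is the cleanest route and is what I expect to require the most care. Finally I would dispose of the boundary slots $a=0,1$ using the conventions $\fra_0=0$ and $\fra_{-1}/(-1)!=\tau_{-2}$ (these feed the $\tau_{-2},\tau_{-1}$ terms), and check the small cases $k=-1,0$ by hand as a consistency test.
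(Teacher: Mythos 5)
Your strategy coincides with the paper's: substitute the $\tau$-to-$\fra$ transition \eqref{eq:tau2a} into $\CT'_k=\CT^1_k+\CT^2_k+\CT^3_k$ from \eqref{ppdd}, note that the result stays quadratic in the $\fra_i$, and sort the outcome by the power of $c_1$ in the argument (your cohomological degrees $4$, $5$, $6$). Your evaluation of the degree-$4$ part is correct, and the collapse
\[
[\,1-a-d^L\,]^k_1=e_k\bigl(-p,-p+1,\dots,0,\dots,q\bigr)=(-1)^p\,p!\,q!\,,\qquad p=a+d^L-1,\quad q=k-a+1-d^L,
\]
together with the parity identity $(-1)^{d^L}=(-1)^{d^Ld^R}$ for $d^L+d^R=4$, is a clean way to recover the factorials and signs of the stated formula; it reproduces exactly the single surviving term in the paper's computation.

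The gap is precisely the step you defer: the vanishing of the degree-$5$ and degree-$6$ parts, i.e.\ of all quadratic terms whose arguments come from $c_1^2\Delta$ and $c_1^3\Delta$. That cancellation is the entire content of the proposition --- without it you have only shown that $\CT'_k$ equals the claimed expression plus unidentified $c_1^2$- and $c_1^3$-proportional quadratic terms --- and your proposal does not prove it. The paper establishes it by direct computation: it records the closed forms
\[
[-a]^k_2=(-1)^a a!(k-a)!\left(\chi_1^{k-a}-\chi_1^{a}\right),\qquad
[-a]^k_3=(-1)^a a!(k-a)!\left(\chi_{1,1}^{k-a}+\chi_{1,1}^{a}-\chi_1^{k-a}\chi_1^{a}\right),
\]
with $\chi_1^m=\sum_{j=1}^m\frac1j$ and $\chi_{1,1}^m=\sum_{1\le i<j\le m}\frac{1}{ij}$, then writes out all three operators $\CT^1_k,\CT^2_k,\CT^3_k$ in the normalized variables $\tilde{\fra}_m=\frac{(\imath u)^{m-1}}{m!}\fra_m$ --- each term carrying harmonic-sum coefficients arising both from \eqref{eq:tau2a} and from the weights $[x]^k_2,[x]^k_3$ --- and verifies that the $c_1^2$ and $c_1^3$ quadratic terms cancel among the three pieces. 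Note that the cancellation genuinely mixes the $j=1,2,3$ summands (subleading pieces of $\CT^1_k$ against leading pieces of $\CT^2_k$, and the $c_1^3$ terms of all three against one another), exactly the mixed terms your sketch leaves unexamined. Your substitute for this verification --- organizing the cancellation ``at the level of the generating function \eqref{eq:Getzler}'' by comparing $[x]^k_j$ with the Pochhammer expansion --- is only named, not executed: no identity among the harmonic sums is exhibited, and no mechanism is given that forces the cancellation. Since this is where essentially all of the work in the proposition lies, the proposal as written is incomplete, though nothing in it points in a wrong direction.
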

\begin{proof}

  
  Using formula \eqref{eq:tau2a}, 
we expand \(\CT'_k\) in terms of \(\fra_i\) to show that the quadratic and cubic in \(c_1\) terms cancel.
  In the computation, we compare  the expressions
  \[[-a]^k_2=(-1)^a a!(k-a)!\left(\sum_{i=1}^{k-a}\frac{1}i-\sum_{i=1}^a\frac{1}i\right),\quad a\ge 0\, ,\  k\ge a\, ,\]
  \[[-a]^k_3=(-1)^aa!(k-a)!\left(\sum_{1\le i<j\le k-a}\frac{1}{ij}+\sum_{1\le i<j\le a}\frac{1}{ij}-\left(\sum_{i=1}^{k-a}\frac{1}{i}\right)
      \left(\sum_{i=1}^a\frac{1}{i}\right)\right),\quad a\ge 0,k\ge a\]
  with the coefficients in \eqref{eq:tau2a}.

 The transformation \eqref{eq:tau2a} simplifies 
if we use the following operators and short-hand notations for the sums:
  \[\tilde{\fra}_k=\frac{(\imath u)^{k-1}}{k!}\fra_k\, , \quad \chi_l^k=\sum_{j=1}^k\frac{1}{j^l}\, , \quad \chi_{1,1}^k=\sum_{1\le i<j \le k}\frac{1}{ij}\, .\]
  In the formulas below, all operators \(\tilde{\fra}_0\) are set to be zero.
  We apply transformation to \(\CT_k^1\) to obtain:
  \begin{multline}
    \sum_{m=-1}^{k+1}(-1)^{d_L-1}(m+d_L-2)!(k-m-d_L+2)!\ \times 
\\\Bigg(\tilde{\fra}_m\tilde{\fra}_{-m+k}(c_1)- \left(\chi_{1}^{m} -\chi_{1}^{k-m-1}\right)\tilde{\fra}_{m}\tilde{\fra}_{-m+k-1}(c_1^2)  \label{eq:T1k-a3}\\ 
+\Big(\chi_1^{m}\chi_1^{k-m-2}+\chi_2^{m}+\chi_{1,1}^{m}+\chi_2^{-m+k-2}+\chi_{1,1}^{-m+k-2}\Big)\tilde{\fra}_{m}\tilde{\fra}_{-m+k-2}(c_1^3)\Bigg)\, .
  \end{multline}
  To write the transformation of \(\CT^2_k\),
 we split the sum for \(\CT^2_k\) into
  two subsums, the first with \(d_L=2\) and the second with \(d_L=3\):
    \begin{multline*}
    \sum_{m=-1}^k(-1)(m)!(k-m)!(\chi_1^{k-m}-\chi_1^{m})
    \Big(\tilde{\fra}_m(c_1^2)\tilde{\fra}_{-m+k-1}(\pt)-\chi_1^{m-1}\tilde{\fra}_{m-1}\tilde{\fra}_{-m-k-1}(c_1^3)\Big)+\\
    (m+1)!(k-m-1)!(\chi_1^{k-m-1}-\chi_1^{m-1})
    \Big(\tilde{\fra}_m(\pt)\tilde{\fra}_{-m+k-1}(c_1^2)-\chi_1^{k-m-2}\tilde{\fra}_m\tilde{\fra}_{-m-k-2}(c_1^2)\Big).
  \end{multline*}
  Finally, the transformation of \(\CT^3_k\) to \(\fra\) variables is
  \begin{gather*}
    \sum_{m=-1}^k(m+1)!(k-m-1)!\Big(\chi_{1,1}^{m-1}+\chi_{1,1}^{k-m-1}-\chi_1^{m-1}\chi_1^{k-m-1}\Big)
    \tilde{\fra}_m\tilde{\fra}_{-m+k-2}(c_1^3).
  \end{gather*}
  After summing the terms \(\CT_k^j\) for \(j=1,2,3\),
 we find that only the first term in
  \eqref{eq:T1k-a3} does not cancel.
\end{proof}

\subsection{Intertwining for $D=1$}
\label{sec:const-term-comp}
 For the most of computations in Section
\ref{sec:main-result}, 
 we will require the simplest
case of the stationary $\GW/\PT$ transformation
 $\Cor^\bullet$ of Section 
\ref{sec:stat-gwpt-corr},
\begin{multline}
  \label{eq:decay}
\Cor^\circ(\tch_{k+2}(\gamma))=\frac{1}{(k+1)!}\mathfrak{a}_{k+1}(\gamma)
+\frac{(\imath u)^{-1}}{k!}\sum_{|\mu|=k-1}\frac{\fra_{\mu_1}\fra_{\mu_2}(\gamma\cdot c_1)}{\Aut(\mu)}
+\\\frac{(\imath u)^{-2}}{k!}\sum_{|\mu|=k-2}\frac{\fra_{\mu_1}\fra_{\mu_2}
(\gamma\cdot c_1^2)}{\Aut(\mu)}+
  \frac{(\imath u)^{-2}}{(k-1)!}\sum_{|\mu|=k-3}\frac{\fra_{\mu_1}\fra_{\mu_2}\fra_{\mu_3}(\gamma\cdot c_1^2)}{\Aut(\mu)}\, .
\end{multline}
 Our first result is
the simplest case of Theorem \ref{thm:intertw}.

\begin{proposition}\label{prop:CTT}
  For all \(k\geq 1\),  we have
\[\Cor^\bullet(\rmL^\PT_k(1))= (\imath u)^{-k}\, \widetilde{\rmL}_k^\GW(1)\, .\]
\end{proposition}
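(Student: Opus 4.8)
The plan is to evaluate both sides as explicit elements of the descendent algebra and match them order by order in $\imath u$, reducing everything to the clean quadratic form supplied by Proposition~\ref{prop:CT}. First I would simplify the two sides. Since $\rmR_k$ is a derivation and $\mathrm{B}^{k+1}$ is a second-order (quadratic) differential operator, both annihilate the unit, and $\delta_k=0$ for $k\ge 1$; hence the definitions give
\[\rmL_k^\PT(1)=\CT_k\,,\qquad \widetilde{\rmL}_k^\GW(1)=\frac{(\imath u)^2}{2}\,\CT'_k\,.\]
By \eqref{ll99v} the left element is $\CT_k=-\tfrac12\sum_{a+b=k+2}(-1)^{d^Ld^R}(a+d^L-3)!(b+d^R-3)!\,\tch_a\tch_b(c_1)$, while Proposition~\ref{prop:CT} rewrites the right one purely in renormalized variables, so that
\[(\imath u)^{-k}\,\widetilde{\rmL}_k^\GW(1)=-\frac12\sum_{a+b=k+2}(-1)^{d^Ld^R}(a+d^L-3)!(b+d^R-3)!\,\frac{\fra_{a-1}\fra_{b-1}(c_1)}{(a-1)!(b-1)!}\,.\]
It therefore suffices to show that $\Cor^\bullet$ applied to each $\tch_a\tch_b(c_1)$ contributes exactly $\frac{\fra_{a-1}\fra_{b-1}(c_1)}{(a-1)!(b-1)!}$ to the weighted sum, with all other contributions cancelling.

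Next I would expand $\Cor^\bullet$ on each quadratic monomial. Writing $\sum_i\gamma^L_i\otimes\gamma^R_i$ for the K\"unneth decomposition of $c_1\cdot\Delta$, the definition of $\Cor^\bullet$ on monomials gives
\[\Cor^\bullet\big(\tch_a(\gamma^L)\tch_b(\gamma^R)\big)=\Cor^\circ(\tch_a(\gamma^L))\,\Cor^\circ(\tch_b(\gamma^R))+\Cor^\circ\big(\tch_a(\gamma^L)\tch_b(\gamma^R)\big)\,.\]
Feeding the leading term $\frac{1}{(a-1)!}\fra_{a-1}(\gamma)$ of the decay formula \eqref{eq:decay} into the first (disconnected) summand produces precisely $\frac{\fra_{a-1}\fra_{b-1}(c_1)}{(a-1)!(b-1)!}$ after summing over the K\"unneth index, matching the target above.

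The remaining contributions must vanish, and they split into two kinds. The connected (two-bump) term \eqref{eq:2bump-intro} vanishes identically: every monomial it produces carries the cohomology factor $\sum_i\gamma^L_i\gamma^R_i=\delta^*(c_1\cdot\Delta)=c_1c_3$, which lies in $H^8(X)=0$ for a $3$-fold (and likewise $c_1^2c_3=0$), so the class is zero for dimension reasons. The same degree argument kills every disconnected subleading term whose argument is of the form $\gamma\cdot c_1$ with $\gamma$ already of top degree. What genuinely survives is a collection of nonzero $\fra$-cubic terms at order $(\imath u)^{-1}$ and $\fra$-cubic and $\fra$-quartic terms at order $(\imath u)^{-2}$, obtained by pairing the leading part of one decay factor with the subleading parts of the other.

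The main obstacle is to show that this surviving remainder cancels after summing over all pairs $(a,b)$ with $a+b=k+2$, since the right-hand side has no such subleading terms. I expect this to follow from the same harmonic-sum identities that drive the proof of Proposition~\ref{prop:CT} — the evaluations of $[-a]^k_2$ and $[-a]^k_3$ in terms of the sums $\chi^k_l$ and $\chi^k_{1,1}$ — now applied to the subleading coefficients of \eqref{eq:decay}, together with the symmetry of the weight $(-1)^{d^Ld^R}(a+d^L-3)!(b+d^R-3)!$ under $(a,\gamma^L)\leftrightarrow(b,\gamma^R)$. Finally I would treat the boundary cases where $a$ or $b$ lies in $\{0,1,2\}$ using the exceptional rules \eqref{eq:except-bump} on the stable pairs side and the conventions $\fra_0=0$, $\fra_{-1}/(-1)!=\tau_{-2}$ on the Gromov--Witten side, so that the two descriptions agree as elements of the descendent algebra and Proposition~\ref{prop:CTT} follows.
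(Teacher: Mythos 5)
Your setup coincides with the paper's: both sides collapse to the identity $\Cor^\bullet(\CT_k)=\frac{(\imath u)^{2-k}}{2}\CT'_k$ (this is \eqref{eq:ctrel}), Proposition~\ref{prop:CT} supplies the right side in $\fra$-variables, and the leading terms of the decay formula \eqref{eq:decay} reproduce the quadratic expression $\frac{\fra_{a-1}\fra_{b-1}(c_1)}{(a-1)!(b-1)!}$ summand by summand. Your observation that every connected (two-bump) contribution dies for degree reasons is correct and worth making explicit: in each K\"unneth summand of $c_1\cdot\Delta$ the two factors have complex degrees summing to $4$ with both degrees at least $1$, so their product lies in $H^8(X)=0$, and every term of \eqref{eq:2bump-intro} carries that product (or its multiple by $c_1$) as its argument. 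The paper uses this silently when it expands $\Cor^\bullet$ of \eqref{eq:CTk1} purely as products of singleton $\Cor^\circ$'s.

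The gap is the final step, which is the actual content of the proposition. You correctly isolate the surviving remainder --- the $\fra$-cubic and $\fra$-quartic terms obtained by pairing leading with subleading parts of the decay factors --- but you only \emph{expect} its cancellation to follow from the harmonic-sum identities (the evaluations of $[-a]^k_2$, $[-a]^k_3$ via $\chi^k_l$, $\chi^k_{1,1}$) behind Proposition~\ref{prop:CT}. That mechanism is misdirected: those identities govern the $\tau\leftrightarrow\fra$ change of variables, which plays no role here, since both the decay formula \eqref{eq:decay} and the output of Proposition~\ref{prop:CT} are already written in $\fra$-variables with pure factorial coefficients. What is actually needed, and what the paper proves, is a pair of elementary combinatorial identities among factorial-weighted partition sums. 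For the quartics: the coefficient of $\fra_{\lambda_1}(\pt)\fra_{\lambda_2}(\pt)\fra_{\lambda_3}(\pt)\fra_{\lambda_4}(\pt)$ produced by the $\tch_a(c_1)\tch_b(\pt)$ and $\tch_a(\pt)\tch_b(c_1)$ subsums of \eqref{eq:CTk1} is $\sum_{i=1}^4(\lambda_i+1)\sum_{j\neq i}(\lambda_j+1)$, while the $H^4\times H^4$ subsum contributes $-\sum(\lambda_{i_1}+\lambda_{i_2}+2)(\lambda_{j_1}+\lambda_{j_2}+2)$ over the three splittings of $\{1,2,3,4\}$ into two pairs; both are the same twelve monomials in the $\lambda_i+1$ with opposite signs. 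A parallel computation kills the cubics, and there the terms with $b=0$, evaluated through $\ch_0(\pt)=-1$, supply one of the two cancelling expressions (compare \eqref{eq:cub1} against \eqref{eq:cubic_term}). This also shows that your plan to defer the cases $a,b\in\{0,1,2\}$ to a final boundary check does not work as stated: the $b=0$ terms are not an edge case to be patched afterwards but an essential ingredient of the cubic cancellation itself.
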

\begin{proof} 
  Since the operators \(\rmR_k\) annihilate $1$,  we must prove \begin{equation}\label{eq:ctrel}
    \Cor^\bullet(\CT_k)= {(\imath u)^{-k}}\left(\frac{(\imath u)^2}{2}\CT'_k\right)\, .
  \end{equation}
From Section \ref{sec:pt-vir-constraints}, we have the following formula on the stable pairs side:
\[\CT_k=-\frac{1}{2}\sum_{a+b=k+2}(-1)^{d^Ld^R}(a+d^L-3)!(b+d^R-3)!\, \tch_a\tch_b(c_1)\, .\]
On the Gromov-Witten side, we have
  \[\CT_k'=-(\imath u)^{k-2}\sum_{a+b=k+2}(-1)^{d^Ld^R}(a+d^L-3)!(b+d^R-3)!\frac{\fra_{a-1}\fra_{b-1}(c_1)}{(a-1)!(b-1)!}\, \]  
by Proposition \ref{prop:CT}.
Using \eqref{eq:decay}, the quadratic term in the $\fra$-insertions of
$\Cor^\bullet(\CT_k)$ exactly matches the full right side of \eqref{eq:ctrel}.
We will prove the other terms of $\Cor^\bullet(\CT_k)$ all vanish.

  The stable pairs term $\CT_k$ is the sum of three subsums:
  \begin{multline}\label{eq:CTk1}
    \frac{1}{2}\sum_{a+b=k+2}\bigg( (a-2)!b!\, \tch_{a}(c_1)\tch_{b}(\pt) +a!(b-2)!\, \tch_{a}(\pt)\tch_{b}(c_1)
    \\ 
    -(a-1)!(b-1)!
    \sum_{s+1\le \bullet,\star\le 2s}\alpha_{\bullet \star}\, \tch_{a}(\gamma_\bullet)
    \tch_{b}(\gamma_\star)\bigg)\, ,
  \end{multline}
  where last term uses{\footnote{We use the subscripts $\bullet$ and
      $\star$ in order to avoid $i,j,a,b$ which are already taken.}}
  $$ c_1\cdot\gamma_{2s+1-\bullet}=\sum_\star
  \alpha_{\bullet \star} \gamma_\star \, .$$
After applying $\Cor^\bullet$ to \eqref{eq:CTk1}  
we obtain quadratic, cubic, and quartic  monomials in \(\fra\).
We will show the cubic and quartic terms vanish.

  We start with the
analysis of the  quartic term of \(\Cor^\bullet(\CT_k)\). The first term \eqref{eq:CTk1} yields the quartic part: 
  \begin{gather*}
    \frac{1}{2}\int_X c_1^3\cdot \sum_{a+b=k+2}\bigg((a-2)!b!\cdot\frac{\fra_{b-1}(\pt)}{(b-1)!}\cdot
    \frac{(\imath u)^{-2}}{(a-3)!}\sum_{|\mu|=a-5}\frac{\fra_{\mu_1}(\pt)\fra_{\mu_2}(\pt)\fra_{\mu_3}(\pt)}{\Aut(\mu)}
    \\+(b-2)!a!\cdot\frac{\fra_{a-1}(\pt)}{(a-1)!}\cdot
    \frac{(\imath u)^{-2}}{(b-3)!}\sum_{|\mu|=b-5}\frac{\fra_{\mu_1}(\pt)\fra_{\mu_2}(\pt)\fra_{\mu_3}(\pt)}{\Aut(\mu)}\bigg)\, .
  \end{gather*}
  The last term of \eqref{eq:CTk1} yields the following quartic part (with
the sum over the same range of $a$ and $b$ as above):
  \[-\frac{1}{2}\int_X c_1^3\, \cdot\, (a-1)!(b-1)!\cdot\frac{(\imath u)^{-1}}{(a-2)!}\sum_{|\mu'|=a-3}\frac{\fra_{\mu'_1}(\pt)\fra_{\mu'_2}(\pt)}{\Aut(\mu')}\cdot\frac{(\imath u)^{-1}}{(b-2)!}
    \sum_{|\mu''|=b-3}\frac{\fra_{\mu''_1}(\pt)\fra_{\mu''_2}(\pt)}{\Aut(\mu'')}\, ,\]
  where, in both formulas, we have used convention \(|\mu|=\sum_i\mu_i\).
 
These two quartic parts cancel each other. 
Indeed, let us analyze the factor in
  front of $$\frac{1}{2(\imath u)^{2}}\int_X c_1^3\cdot \fra_{\lambda_1}(\pt)\fra_{\lambda_2}(\pt)\fra_{\lambda_3}(\pt)\fra_{\lambda_4}(\pt)$$
in  both expressions. 
For simplicity,
  let us assume \(|\Aut(\lambda)|=1\). 
Then, the factor in the first quartic part is a sum with four terms:
\begin{equation}\label{44667} 
\sum_{i=1}^4\left(\lambda_i+1\right)\left(\sum_{j\ne i}(\lambda_j+1)\right)\, .
\end{equation}
 The factor in the second formula is a sum with three terms:
\begin{equation}\label{44668}  
-\sum (\lambda_{i_1}+\lambda_{i_2}+2)(\lambda_{j_1}+\lambda_{j_2}+2)\, ,
\end{equation}
  where the sum is over all splittings 
$$\{1,2,3,4\}=\{i_1,i_2\}\cup\{j_1,j_2\}\,.$$
 The factors \eqref{44667} and \eqref{44668}
 are sums of twelve monomials of \(\lambda_i+1\) and
  are opposites of each other. The case when \(|\Aut(\lambda)|>1\) is analogous.

  Finally, we analyze the cubic terms. Let us first analyze the cubic terms of the form \(\fra_i(\pt)\fra_j(\pt)\fra_l(\pt)\). Since 
$$\ch_{k+2}(c_1)\ch_0(\pt)=
  (-1)\ch_{k+2}(c_1)\, ,$$
 the cubic part of the first term of \eqref{eq:CTk1} with \(b=0\) is:
  \begin{equation}\label{eq:cub1}
  -k\int_X\frac{c_1^3}{2(\imath u)^2}\sum_{|\mu|=k-1}\frac{\fra_{\mu_1}(\pt)\fra_{\mu_2}(\pt)\fra_{\mu_3}(\pt)}{\Aut(\mu)}\, .\end{equation}
A similar cubic part
 is produced by the second term of \eqref{eq:CTk1} with \(a=0\).

The other cubic parts of the first term of \eqref{eq:CTk1} are:
  \begin{equation}
    \label{eq:cubic_term}
    \int_Xc_1^3\sum_{a+b=k+2}\frac{b}{2 (\imath u)^2}\fra_{b-1}(\pt) \sum_{|\mu|=a-4}\frac{\fra_{\mu_1}(\pt)\fra_{\mu_2}(\pt)}{\Aut(\mu)}
    +\frac{b}{2\imath u}\fra_{b-1}(\pt)\sum_{|\mu|=a-3}\frac{\fra_{\mu_1}\fra_{\mu_2}(c_1^2)}{\Aut(\mu)}\, .
  \end{equation}
Similar term is yielded by the second term of \eqref{eq:CTk1}.
  
  If \(\Aut(\mu)=1\), then the factor  in front of monomial
  $$\frac{1}{2(\imath u)^{2}}\fra_{\lambda_1}(\pt)\fra_{\lambda_2}(\pt)\fra_{\lambda_3}(\pt)$$ of \eqref{eq:cubic_term}
 is the sum of three terms
  $$ (\lambda_1+1)+(\lambda_2+1)+(\lambda_3+1)$$
 and, hence, cancels with corresponding monomial from \eqref{eq:cub1}.

  The cubic part of the last term of \eqref{eq:CTk1} is 
\begin{multline*} 
-\frac{(a-1)}{2 \imath u} \sum_{\bullet,\star}\alpha_{\bullet \star}\, \fra_{b-1}(\gamma_\star)\sum_{|\mu|=a-3}\frac{\fra_{\mu_1}\fra_{\mu_2}(c_1\cdot\gamma_\bullet)}{\Aut(\mu)}\\
-
    \frac{(b-1)}{2 \imath u}\sum_{\bullet,\star}\alpha_{\bullet \star}\, \fra_{a-1}(\gamma_\star)\sum_{|\mu|=b-3}\frac{\fra_{\mu_1}\fra_{\mu_2}(c_1\cdot\gamma_\bullet)}{\Aut(\mu)}\, ,
\end{multline*}
  over all \(a,b\geq 0\) satisfying \(a+b=k+2\). 
The sum cancels with the last term of~\eqref{eq:cubic_term}. \end{proof}


\section{Intertwining II: Non-interacting insertions}
\label{nonintS}
\subsection{Overview}
The main
result of Section \ref{nonintS}
is a proof of Theorem \ref{thm:intertw} for 
\begin{equation}\label{gmm3}
D\in \mathbb{D}^1_\PT \cap \mathbb{D}^{X\bigstar}_{\PT\circ}
\, ,
\end{equation}
where
 \(D\) is a product of \(\tch_{k_i}(\gamma_i)\) satisfying
 $$\gamma_i\cdot \gamma_j=0\ \ \text{for}\ \   i\ne j\, .$$

 We 
treat the singleton $D= \tch_{k}({\mathsf{p}})$  in
 Proposition \ref{prop:shiftpt}. An intricate
 computation is required for Proposition \ref{prop:no-bump} 
which settles the 
cases $D= \tch_{k}(\gamma)$ where
$$\gamma\in H^i(X)\ \ \
{\text{for $i=2$ and 4.}}$$
Finally, in Section \ref{mono9}, the general case  \eqref{gmm3}
is formally deduced from the singletons.

  \subsection{Intertwining shift operators}
\label{sec:intertw-shift-oper}
We first relate 
the operators $\rm R_k$ appearing in the Virasoro constraints on the
stable pairs  and Gromov-Witten sides. Recall,
\begin{equation}\label{55pp55}
  \tch_k(\alpha)=\ch_k(\alpha)+\frac{1}{24}\ch_{k-2}(\alpha\cdot c_2 )\,,
  \end{equation}
so $\tch_k(\pt)=\ch_k(\pt)$.


\begin{proposition}\label{prop:shiftpt}
  For all $k\geq 1$ and all $i\geq 2$,  we have
  \[\Cor^\bullet(\rmR_k( \ch_{i}(\pt)))=(\imath u )^{-k} \, \rmR_k(\Cor^\bullet( \ch_{i}(\pt)))\, .\]
\end{proposition}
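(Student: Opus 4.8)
The plan is to prove the identity by direct computation of both sides, exploiting that the point class $\pt\in H^6(X)$ is annihilated by multiplication with $c_1$ (and $c_2$). This collapses every $c_1$-twisted term in both the descendent correspondence and the two shift operators, so the statement reduces to matching a single numerical coefficient together with the bookkeeping of the powers of $\imath u$. Throughout I would use $\tch_i(\pt)=\ch_i(\pt)$, as recorded just before the proposition in \eqref{55pp55}.

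First I would record the two shift operators on the relevant generators. On the stable pairs side, since $\pt$ has complex degree $d=3$, the definition of $\rmR_k$ gives
\[\rmR_k(\ch_i(\pt))=\Big(\prod_{n=0}^{k}(i+n)\Big)\,\ch_{i+k}(\pt)=\frac{(i+k)!}{(i-1)!}\,\ch_{i+k}(\pt)\, .\]
On the Gromov-Witten side, because $\pt\cdot c_1^{\,j}=0$ for $j\geq 1$, only the $j=0$ summand $\rmR_k^0$ survives; as $[m+2]^k_0=\prod_{n=0}^{k}(m+2+n)$ is a product of $k+1$ consecutive integers, evaluating at $m=i-2$ yields
\[\rmR_k(\tau_{i-2}(\pt))=\Big(\prod_{n=0}^{k}(i+n)\Big)\,\tau_{i+k-2}(\pt)=\frac{(i+k)!}{(i-1)!}\,\tau_{i+k-2}(\pt)\, .\]
The conceptual heart of the argument is that these two coefficients coincide: the different index shifts $i+d-3+n$ on the stable pairs side and $(m+d-1)+n$ with $m=i-2$ on the Gromov-Witten side both collapse to $i+n$ once $d=3$.

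Next I would evaluate $\Cor^\bullet$ on point descendents. Since $\pt\cdot c_1=\pt\cdot c_1^2=0$, formula \eqref{eq:decay-intro} retains only its leading term, $\Cor^\circ(\tch_j(\pt))=\tfrac{1}{(j-1)!}\fra_{j-1}(\pt)$; converting $\fra$ to $\tau$ via the point specialization $\tau_k(\pt)=\tfrac{(\imath u)^k}{(k+1)!}\fra_{k+1}(\pt)$ of \eqref{eq:tau2a} (equivalently, directly from \eqref{eq:Cpoint} with $D=1$) gives
\[\Cor^\bullet(\ch_j(\pt))=(\imath u)^{-(j-2)}\,\tau_{j-2}(\pt)\, .\]
Applying this with $j=i+k$ to the first display, the left side of the proposition becomes $\tfrac{(i+k)!}{(i-1)!}(\imath u)^{-(i+k-2)}\tau_{i+k-2}(\pt)$. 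For the right side I apply the Gromov-Witten $\rmR_k$ to $\Cor^\bullet(\ch_i(\pt))=(\imath u)^{-(i-2)}\tau_{i-2}(\pt)$ using the second display, obtaining $(\imath u)^{-k}\cdot(\imath u)^{-(i-2)}\tfrac{(i+k)!}{(i-1)!}\tau_{i+k-2}(\pt)$, which is identical since $-k-(i-2)=-(i+k-2)$.

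I do not anticipate a genuine obstacle: the argument is mechanical once the point-class vanishings are invoked, and the hypotheses $k\geq 1$, $i\geq 2$ only serve to keep all descendents in the essential range where \eqref{eq:decay-intro} applies. The one place that requires a moment's care is the $\imath u$ exponent bookkeeping — confirming that the shift $j\mapsto j+k$ on the $\Cor^\bullet$-image of a point descendent changes the $\imath u$-power by exactly $(\imath u)^{-k}$, which is precisely the prefactor demanded by the statement.
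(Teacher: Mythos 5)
Your proof is correct and follows essentially the same route as the paper's: both sides are reduced to a single leading coefficient using the vanishing $\pt\cdot c_1=0$ (so only the first term of \eqref{eq:decay-intro} and the $j=0$ part of the Gromov--Witten shift operator survive), and the $\fra$--$\tau$ conversion from \eqref{eq:tau2a} supplies exactly the $(\imath u)^{-k}$ prefactor. The only cosmetic difference is that the paper performs the final comparison in the $\fra$-variables while you compare in the $\tau$-variables; the computation is identical.
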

\begin{proof}

  The left side of the equation is
  \[\Cor^\bullet(\rmR_k(\ch_i(\pt)))=\Cor^\bullet\left(\frac{(i+k)!}{(i-1)!}\ch_{i+k}(\pt)\right)=
    \frac{(i+k)!}{(i-1)!}\frac{\fra_{i+k-1}(\pt)}{(i+k-1)!}=\frac{(i+k)}{(i-1)!}
\, \fra_{i+k-1}(\pt) \, ,\]
where we have used the definition of $\rmR_k$ for stable pairs and
equation \eqref{eq:decay-intro} for the correspondence.

The right side of the equation is
 \begin{eqnarray*}\rmR_k(\Cor^\bullet(\ch_i(\pt)))&=&\rmR_k\left(\frac{\fra_{i-1}(\pt)}{(i-1)!}\right)\\
&=&\rmR_k\left(\frac{\tau_{i-2}(\pt)}{(\imath u)^{i-2}}\right)\\
&=&\frac{(i+k)!}{(i-1)!}\frac{\tau_{i+k-2}(\pt)}{(\imath u)^{i-2}}\\ &=&
  \frac{(i+k)}{(i-1)!}(\imath u)^k\, \fra_{i+k-1}(\pt)\, ,
\end{eqnarray*}
where we have  used \eqref{eq:decay-intro} for the
correspondence, 
equation \eqref{eq:tau2a}, and
the definition of $\rmR_k$ for Gromov-Witten theory.
The two sides match.
\end{proof}

\begin{proposition} \label{prop:no-bump}
  For all $k\geq 1$, $\tch_i(\gamma)\in \mathbb{D}^{\bigstar X}_\PT$,
  \(\gamma\in H^{\ge 2}(X)\) we have
  \[\Cor^\bullet(\rmL^\PT_k(\tch_i(\gamma)))=(\imath u)^{-k}\, 
\widetilde{\rmL}^{\GW}_k(\Cor^\bullet(\tch_i(\gamma)))\, .\]
  \end{proposition}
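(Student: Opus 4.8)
The plan is to expand both sides in the renormalized Gromov--Witten variables $\fra_n$ and to compare them by $\fra$-degree, exactly as in the proof of Proposition~\ref{prop:CTT}. The first move is to split $\rmL^\PT_k(\tch_i(\gamma))=\CT_k\cdot\tch_i(\gamma)+\rmR_k(\tch_i(\gamma))$ and to exploit the multiplicativity of $\Cor^\bullet$ over set partitions. Writing $\CT_k$ as multiplication by a sum of products $\tch_a\tch_b(c_1)$, the element $\CT_k\cdot\tch_i(\gamma)$ is a combination of products of three descendents, so the set-partition formula for $\Cor^\bullet$ separates its image into the piece in which $\tch_i(\gamma)$ occupies its own block and the piece in which $\tch_i(\gamma)$ is grouped with one or both of the two $c_1$-descendents of $\CT_k$. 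The first piece is $\Cor^\circ(\tch_i(\gamma))\cdot\Cor^\bullet(\CT_k)$, and Proposition~\ref{prop:CTT}, read as an identity of elements, gives $\Cor^\bullet(\CT_k)=(\imath u)^{-k}\tfrac{(\imath u)^2}{2}\CT'_k$. By commutativity of $\bbD^X_\GW$ this matches exactly the $\CT'_k$-multiplication term in $(\imath u)^{-k}\widetilde{\rmL}_k^\GW\bigl(\Cor^\circ(\tch_i(\gamma))\bigr)$.

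Before comparing what remains, I would dispose of the other two pieces of $\widetilde{\rmL}_k^\GW$ on the right. For $k\ge 1$ the anomaly term $-\tfrac{\delta_k}{24}\int_X c_1 c_2$ is absent, and the quadratic differential $\tfrac{u^{-2}}{2}\mathrm{B}^{k+1}$ contributes nothing: its only nonzero action is on an $\fra_1\fra_1$-pair, which after Getzler's substitution \eqref{eq:Getzler} and contraction produces an integral $\int_X\gamma\, c_1^{k+2}$ of a class of complex degree at least $1+(k+2)\ge4$, exceeding $\dim_\C X=3$, and hence vanishing for $\gamma\in H^{\ge2}(X)$ and $k\ge1$. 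This is the precise sense in which $\widetilde{\rmL}_k^\GW$ is first order on the stationary sector for $k>0$.

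After these cancellations the proposition reduces to a single shift-type identity,
\[
\Cor^\circ\bigl(\rmR_k(\tch_i(\gamma))\bigr)+\Phi(\gamma)=(\imath u)^{-k}\,\rmR_k\bigl(\Cor^\circ(\tch_i(\gamma))\bigr)\, ,
\]
where the stable-pairs shift $\rmR_k$ appears on the left, the Gromov--Witten shift $\rmR_k=\sum_j\rmR_k^j$ on the right, and $\Phi(\gamma)$ collects the two-bump and triple-bump contributions of \eqref{eq:2bump-intro}--\eqref{eq:triple-bump-intro} in which $\tch_i(\gamma)$ is grouped with a $c_1$-descendent of $\CT_k$. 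This is exactly the generalization of Proposition~\ref{prop:shiftpt} from $\gamma=\pt$ to arbitrary $\gamma\in H^{\ge 2}(X)$: for the point class the descendents do not bump (see \eqref{eq:Cpoint}), so $\Phi(\pt)=0$ and only the $\rmR_k$-intertwining survives, whereas for $\gamma\in H^2(X)$ or $H^4(X)$ the interaction term $\Phi(\gamma)$ is genuinely present. I would verify the displayed identity by expanding the left side via \eqref{eq:decay} applied to the shifted index $i+k$ together with \eqref{eq:2bump-intro}--\eqref{eq:triple-bump-intro}, expanding the right side by letting $\rmR_k$ act termwise on \eqref{eq:decay}, and then matching the coefficient of each fixed $\fra$-monomial.

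The main obstacle will be this last, interaction-weighted, matching. The delicate input is the coefficient $\max\bigl(\max(k_1,k_2),\max(\mu_1+1,\mu_2+1)\bigr)$ in the two-bump rule \eqref{eq:2bump-intro}, which must be reconciled with the shift weights $[\,\cdot\,]^k_j$ of $\rmR_k$ and with the harmonic-sum coefficients $\sum 1/i$, $\sum 1/(ij)$ produced by expanding $\tch$ in $\fra$ through \eqref{eq:tau2a} and Proposition~\ref{prop:CT}. As in Proposition~\ref{prop:CTT}, I would fix a target monomial $\fra_{\lambda_1}(\,\cdot\,)\fra_{\lambda_2}(\,\cdot\,)$, treat $|\Aut(\lambda)|=1$ first and the general case by symmetry, and reduce the claim to a summation identity among these scalar coefficients; the $\fra$-linear part recovers the plain $\rmR_k$-intertwining while the $\fra$-quadratic part is where the $\max$-weight and the harmonic sums conspire. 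The cubic and higher contributions, arising from the fully-separated and triple-bump families, should cancel by the same mechanism as in the $D=1$ case, so no new idea is required there.
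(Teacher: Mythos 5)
Your skeleton coincides with the paper's proof: split $\rmL^\PT_k(\tch_i(\gamma))=\CT_k\cdot\tch_i(\gamma)+\rmR_k(\tch_i(\gamma))$, peel off the set-partition block in which $\tch_i(\gamma)$ sits alone and match it against $\tfrac{(\imath u)^2}{2}\CT'_k$ via the identity \eqref{eq:ctrel} established in Proposition~\ref{prop:CTT}, note that $\mathrm{B}^{k+1}$ and the anomaly term drop out for $k\geq 1$, and reduce everything to the statement that the mismatch of the two shift operators is exactly compensated by the terms in which $\tch_i(\gamma)$ bumps with the $c_1$-descendents of $\CT_k$. This is precisely the paper's reduction to \eqref{eq:diffExp} (and to \eqref{eq:diffExp-long} when $\gamma\in H^2(X)$), carried out case by case for $\gamma\in H^6$, $H^4$, $H^2$.

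The gap is that the matching you defer is the actual content of the proposition, and your plan, as stated, cannot close for two concrete reasons. First, in the sum $\sum_{a+b=k+2}$ defining $\CT_k$ the boundary terms $a=1$ and $b=0$ do not fall under \eqref{eq:2bump-intro}--\eqref{eq:triple-bump-intro}: they involve the formal symbol $(-1)!\,\ch_1(c_1)$ and the insertion $\tau_{-2}(\pt)$, so your interaction term $\Phi(\gamma)$ omits them, and the coefficient matching then fails exactly at the edge monomials. In the paper, the $a=1$ term is converted by the exceptional bumping rule \eqref{eq:2bump-minus-1-factorial} into the missing terms of \eqref{eq:DCDC} with $\mu_1=k$ or $\mu_2=k$, and the $b=0$ term, evaluated with $\tau_{-2}(\pt)=1$, cancels the leftover $\tau$-linear term \eqref{eq:Non-cancel}; without these conventions the two sides of your displayed identity genuinely differ. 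Second, your assertion that the $\tau$-cubic contributions ``cancel by the same mechanism as in the $D=1$ case'' is not correct: in Proposition~\ref{prop:CTT} the cubic and quartic terms cancel internally within $\Cor^\bullet(\CT_k)$, whereas for $\gamma\in H^2(X)$ the cubic mismatch \eqref{eq:coeff-diff} of the shift operators must be cancelled against the $\max$-weighted double-bump terms \eqref{eq:bump-split1}--\eqref{eq:bump-split2} and against the K\"unneth-coproduct terms \eqref{eq:bump-shift1} coming from the last lines of \eqref{eq:diffExp-long}, combined as in \eqref{eq:small-terms1}; this cross-cancellation between families, with its case split on $i>k$, is the most delicate part of the paper's argument and is exactly where a wrong coefficient in \eqref{eq:2bump-intro} would be detected, so it cannot be waved through.
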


  \begin{proof}
 We start with the easiest case and proceed
to the hardest case.

\vspace{15pt}
    \noindent {\underline{\bf Case \(\gamma\in H^6(X).\)}}
    The case \(\gamma=\pt\) follows immediately from the previous results:
    \begin{eqnarray*}\Cor^\bullet(\rmL^\PT_k(\ch_i(\pt)))&=&\Cor^\bullet(
                                \CTp_k\, \ch_i(\pt) + \rmrp_k(\ch_i(\pt)))\\
                                                         &=& \Cor^\bullet(\CTp_k)\Cor^\bullet(\ch_i(\pt))+ (\imath u)^{-k}\rmrg_k(\Cor^\bullet(\ch_i(\pt))\\
      &=&
          (\imath u)^{-k}\tilde{\rmL}^{\GW}_k(\Cor^\bullet(\ch_i(\pt)))\, .
          \end{eqnarray*}
The second equality follows from Proposition~\ref{prop:shiftpt} and \eqref{eq:Cpoint}. The third equality uses \eqref{eq:ctrel}.

\vspace{15pt}    
\noindent\underline{    {\bf Case \(\gamma\in H^4(X).\)}}
We compute the
    difference
    \begin{equation}\label{eq:diffL}
      (\imath u )^k\Cor^\bullet(\rmR_k(\tch_i(\gamma)))-\rmR_k(\Cor^\bullet(\tch_i(\gamma)))\, .
    \end{equation}
    Since $\gamma\cdot c_2=0$, we have $\tch_k(\gamma)=\ch_k(\gamma)$
    by \eqref{55pp55}.
    
 We start by expanding the first term of the difference:
    \begin{eqnarray*}
      \Cor^\circ(\rmR_k(\ch_i(\gamma)))&=&\Cor^\circ\left(\frac{(i+k-1)!}{(i-2)!}\ch_{k+i}(\gamma)\right)\\
      & =&\frac{(i+k-1)!}{(i-2)!}\left(\frac{\fra_{i+k-1}(\gamma)}{(i+k-1)!}                                                                                                              
      +
        \frac{(\imath u)^{-1}}{(i+k-2)!}\sum_{\mu_1+\mu_2=i+k-3}\frac{\fra_{\mu_1}\fra_{\mu_2}}{2}(\gamma \cdot c_1)\right)\, .
    \end{eqnarray*}
To proceed, we invert  the correspondence \eqref{eq:tau2a}:
    \begin{equation}
      \label{eq:inv_tau2a}
  \frac{(\imath u)^{k}\fra_{k+1}}{(k+1)!}(\gamma)=\tau_k(\gamma)+\left(\sum_{i=1}^k\frac{1}{i}\right)\tau_{k-1}(\gamma\cdot c_1)+\left(\sum_{1\le i<j\le k}\frac{1}{ij}\right)\tau_{k-2}(\gamma\cdot c_1^2)\, .
\end{equation}
We then obtain
\begin{multline}\label{eq:RHSchL}
(\imath u)^k  \Cor^\circ(\rmR_k(\ch_i(\gamma)))=\frac{(i+k-1)!}{(i-2)!}\left(\frac{\tau_{i+k-2}(\gamma)}{(\imath u)^{i-2}}+\left(\sum_{j=1}^{i+k-2}\frac{1}{j}\right)\frac{\tau_{i+k-3}(\gamma \cdot c_1)}{(\imath u)^{i-2}}\right.\\
      \left.+
  \frac{(\imath u)^{-i+4}}{(i+k-2)!}\sum_{\mu_1+\mu_2=i+k-3}\mu_1!\mu_2!\frac{\tau_{\mu_1-1}\tau_{\mu_2-1}}{2}(\gamma\cdot c_1)\right).
\end{multline}

We write the second term of the difference as
\begin{equation}\label{eq:LHSchL}
  \rmR_k(\Cor^\circ(\ch_i(\gamma)))=\rmR_k\left(\frac{\fra_{i-1}(\gamma)}{(i-1)!}+\frac{(\imath u)^{-1}}{(i-2)!}\sum_{\mu_1+\mu_2=i-3}\frac{\fra_{\mu_1}\fra_{\mu_2}}{2}(\gamma \cdot c_1)\right)\, .
\end{equation}
After applying the inversion \eqref{eq:inv_tau2a}, we have
$$
  \rmR_k\left(\frac{\tau_{i-2}(\gamma)}{(\imath u)^{i-2}}+\left(\sum_{j=1}^{i-2}\frac{1}{j}\right)\frac{\tau_{i-3}(\gamma \cdot c_1)}{(\imath u)^{i-2}}+\frac{(\imath u)^{4-i}}{(i-2)!}\sum_{\mu_1+\mu_2=i-3}\mu_1!\mu_2!
    \frac{\tau_{\mu_1-1}\tau_{\mu_2-1}}{2}(\gamma \cdot c_1)\right)\, .$$
We expand the above expression fully to obtain 
\begin{small}
\begin{multline}\label{eq:LHSexpanded}
\hspace{150pt}  \frac{(i+k-1)!\tau_{i+k-2}(\gamma)}{(\imath u)^{i-2}(i-2)!}
  \\+\frac{(i+k-1)!}{(\imath u)^{i-2}(i-2)!}\left(\sum_{j=i-1}^{k+i-1}\frac1j\right)\tau_{i+k-3}(\gamma \cdot c_1)
  +\frac{(i+k-1)!}{(\imath u)^{i-2}(i-2)!}\left(\sum_{j=1}^{i-2}\frac{1}{j}\right)\tau_{i-k+3}(\gamma \cdot c_1)\\
  +
  \frac{(\imath u)^{-i+4}}{(i-2)!}\sum_{\mu_1+\mu_2=i-3}\left((\mu_1+k+1)!\mu_2!\frac{\tau_{\mu_1+k-1}\tau_{\mu_2-1}}{2}(\gamma \cdot c_1)
    +\mu_1!(\mu_2+k+1)!\frac{\tau_{\mu_1-1}\tau_{\mu_2+k-1}}{2}(\gamma \cdot
    c_1)\right),
\end{multline}
\end{small}
where we have used formula
$$[i]^k_1=\frac{(i+k)!}{(i-1)!}\sum_{j=i}^{i+k}\frac1j$$ in 
the expansion of \(\rmR_k(\tau_{i-2}(\gamma))\).

To complete our computation of the difference \eqref{eq:diffL}, we observe
several cancellations.
The first term of \eqref{eq:RHSchL}
cancels with first term of \eqref{eq:LHSexpanded}.
The second term of
  \eqref{eq:RHSchL}  almost cancels with the sum of the second and third terms
  of \eqref{eq:LHSexpanded}, the only terms that does not cancel is
  \begin{equation}\label{eq:Non-cancel}
    -\frac{(i+k-2)!}{(\imath u)^{i-2}(i-2)!}\tau_{i+k-3}(\gamma\cdot c_1)
\end{equation}  
  Finally, we rewrite the last term of \eqref{eq:RHSchL}
  as
  \[\frac{(\imath u)^{-i+4}}{(i-2)!}\sum_{\mu_1+\mu_2=i+k-3}(\mu_1+1)!\mu_2!\frac{\tau_{\mu_1-1}\tau_{\mu_2-1}}{2}(\gamma \cdot c_1)+\mu_1!(\mu_2+1)!\frac{\tau_{\mu_1-1}\tau_{\mu_2-1}}{2}(\gamma \cdot c_1)\, .\]
  Then, we see that the last term of \eqref{eq:LHSchL} cancels with the
  last term of \eqref{eq:RHSchL}  if \(\mu_1\ge k+1\) and \(\mu_2\ge k+1\). Thus the difference
  \eqref{eq:diffL} equals
\begin{multline}\label{eq:DCDC}
  \frac{(\imath u)^{-i+4}}{(i-2)!}\left(\sum_{\mu_1+\mu_2=i+k-3,\, \mu_1\le k}(\mu_1+1)!\mu_2!\frac{\tau_{\mu_1-1}\tau_{\mu_2-1}}{2}(\gamma \cdot c_1)\right.\\\left.+
    \sum_{\mu_1+\mu_2=i+k-3,\, \mu_2\le k}\mu_1!(\mu_2+1)!\frac{\tau_{\mu_1-1}\tau_{\mu_2-1}}{2}(\gamma \cdot c_1)\right).
\end{multline}

We now include the $\rmT_k$ and $\rmT_k'$ terms in the difference. We have
\begin{multline}\label{eq:diffExp}
  (\imath u)^k\Cor^\bullet(\rmL^{\PT}_k(\ch_i(\gamma)))-{\widetilde{\rmL}}^\GW_k(\Cor^\bullet(\ch_i(\gamma)))=\\
  (\imath u )^k\Cor^\bullet(\rmR_k(\ch_i(\gamma)))-\rmR_k(\Cor^\bullet(\ch_i(\gamma))) +(\imath u )^k\Cor^\bullet(\rmT_k(\ch_i(\gamma)))-\frac{(\imath u)^2}{2}\rmT'_k(\Cor^\bullet(\ch_i(\gamma)))
  \end{multline}
Using \eqref{eq:ctrel}, the $\rmT_k$ and $\rmT_k'$ terms in \eqref{eq:diffExp} simplify to 
\begin{multline}\label{fr559}
 \frac{(\imath u)^{k}}{2}\sum_{a+b=k+2}(a-2)!b!
\Cor^\circ\left(\frac{\tch_{a}(c_1)\ch_i(\gamma)}{(\imath u)^{b-2}}\right)\tau_{b-2}(\pt)\\
 \frac{(\imath u)^{k}}{2}\sum_{a+b=k+2}
a!(b-2)!\tau_{a-2}(\pt)\Cor^\circ\left(
    \frac{\tch_{b}(c_1)\ch_i(\gamma)}{(\imath u)^{a-2}}\right)\, .
  \end{multline}

  To complete our proof, we require
  the bumping formula \eqref{eq:2bump-intro}:
  \begin{equation}
    \label{eq:2bump}
    \Cor^\circ(\tch_{k_1+2}(c_1)\tch_{k_2+2}(\gamma))=-\frac{1}{k_1!k_2!}
    (\imath u)^{-1}\fra_{k_1+k_2}(c_1\gamma)\, .
\end{equation}
Since \(\gamma\in H^4(X)\), all the other terms of \eqref{eq:2bump-intro} vanish. 
We apply the bumping formula \eqref{fr559}.
In particular, the first term of \eqref{fr559},
  \[(a-2)!b!
  \Cor^\circ\left(\frac{\tch_{a}(c_1)\ch_i(\gamma)}{(\imath u)^{b-2}}\right)\tau_{b-2}(\pt)=- (\imath u)^{-a-b-i+6}\frac{(a+i-2)!b!}{(i-2)!}\tau_{a+i-3}(\gamma\cdot c_1)\tau_{b-2}(\pt)\, \]
cancels with the first term of \eqref{eq:DCDC}. Similarly, the second term of \eqref{fr559} cancels with the second term of \eqref{eq:DCDC}.

Let us observe that the term of last expression with \(a=1\)  by the exceptional
bumping \eqref{eq:2bump-minus-1-factorial} turns into the terms of \eqref{eq:DCDC}
with \(\mu_1=k\) or \(\mu_2=k\). Similarly, the
term with \(b=0\) cancels out with the term \eqref{eq:Non-cancel}.

Also the assumption
\(\tch_i(\gamma)\in \mathbb{D}^{X\bigstar}_\PT\) implies that \(i\ge 2\) thus
no negative factorials appear in the above computations.

\vspace{15pt}

\noindent\underline{{\bf Case \(\gamma\in H^2(X).\)}} 

If \(\gamma\in H^2(X)\),
the $\rmT_k$ and $\rmT_k'$ terms of the formula \eqref{eq:diffExp} acquires  extra summands:
    \begin{multline}\label{eq:diffExp-long}
  (\imath u)^k\Cor^\bullet(\rmL^\PT_k(\tch_i(\gamma)))-\widetilde{\rmL}_k^\GW(\Cor^\bullet(\tch_i(\gamma)))=\\
 \hspace{-150pt}  (\imath u )^k\Cor^\bullet(\rmR_k(\tch_i(\gamma)))-\rmR_k(\Cor^\bullet(\tch_i(\gamma)))\\+
  \frac{(\imath u)^{k}}{2}\bigg[\sum_{a+b=k+2}(a-2)!b!
  \Cor^\circ\left(\frac{\tch_{a}(c_1)\ch_i(\gamma)}{(\imath u)^{b-2}}\right)\tau_{b-2}(\pt)+a!(b-2)!\tau_{a-2}(\pt)\Cor^\circ\left(
    \frac{\tch_{b}(c_1)\ch_i(\gamma)}{(\imath u)^{a-2}}\right)\\-
  \sum_{a+b=k+2}(a-1)!(b-1)!\sum_{0<\bullet,\star <2s+1}
  \alpha_{\bullet \star}\left(\Cor^\circ(\tch_{a}(\gamma_\bullet)\cdot
     \ch_i(\gamma))\Cor^\circ(\tch_{b}(\gamma_\star))\right.\\\left.+\Cor^\circ(\tch_{a}(\gamma_\bullet))\Cor^\circ(\tch_{b}(\gamma_\star)\cdot\ch_i(\gamma))\right)\bigg]\, ,
 \end{multline}
 where we have
 used\footnote{In \eqref{eq:diffExp-long}, the elements \(\gamma_\bullet,
   \gamma_\star\) are of complex cohomological degree \(2\).}
$c_1\cdot\gamma_{2s+1-\bullet}=\sum_\star \alpha_{\bullet \star}\gamma_\star$.
Nevertheless, the strategy used in the previous case
can be pursued also
for \(\gamma\in H^2(X)\).
The computation, which is carried out below,  is of course more complicated.

We will study the 
    difference
    \begin{equation}\label{eq:diffL+}
      (\imath u )^k\Cor^\bullet(\rmR_k(\tch_i(\gamma)))-\rmR_k(\Cor^\bullet(\tch_i(\gamma)))\, 
    \end{equation}
  with \(\gamma\in H^2(X)\).
  The expansion of the first term is:
  \begin{multline}\label{eq:H-no-bump1}
    (\imath u)^k\Cor^\circ(\rmR_k(\tch_i(\gamma)))
    =(\imath u)^k\frac{(k+i-2)!}{(i-3)!}\Cor^\circ(\tch_{i+k}(\gamma))\\
    =(\imath u)^k\frac{(k+i-2)!}{(i-3)!}\left(\frac{\fra_{i+k-1}(\gamma)}{(i+k-1)!}    +\frac{(\imath u)^{-1}}{(i+k-2)!}\sum_{|\mu|=i+k-3}\frac{\fra_{\mu_1}\fra_{\mu_2}}{\Aut(\mu)}(\gamma \cdot c_1)
\right.\\
    \left. \hspace{100pt}
      +\frac{(\imath u)^{-2}}{(i+k-2)!}\sum_{|\mu|=i+k-4}
      \frac{\fra_{\mu_1}\fra_{\mu_2}}{\Aut(\mu)}(\gamma \cdot c_1^2)\right.\\
    \left.+\frac{(\imath u)^{-2}}{(i+k-3)!}\sum_{|\mu|=i+k-5}\frac{\fra_{\mu_1}\fra_{\mu_2}\fra_{\mu_3}}{\Aut(\mu)}(\gamma \cdot c_1^2)\right)\,.
  \end{multline}

  The second term of the difference \eqref{eq:diffL+}
  is more involved since we must  transform the descendents \(\fra\) to the standard descendents \(\tau\) before applying the
  shift operator \(\rmR_k\):
   \begin{multline}\label{eq:H-no-bump2}
     \rmR_k(\Cor^\circ(\tch_i(\gamma)))=\\(\imath u)^{-(i-2)}\rmR_k\left(\tau_{i-2}(\gamma)+\left(\sum_{j=1}^{i-2}\frac{1}{j}\right)
       \tau_{i-3}(\gamma \cdot c_1)+\left(\sum_{1\le j<l\le i-2}\frac{1}{jl}\right)\tau_{i-4}(\gamma\cdot c_1^2)\right)\\
     +(\imath u)^{-(i-5)}\rmR_k\left(\frac{(\imath u)^{-1}}{(i-2)!}\left(\sum_{|\mu|=i-3}\frac{\mu_1!\mu_2!}{\Aut(\mu)}
         \left(\tau_{\mu_1-1}\tau_{\mu_2-1}(\gamma \cdot c_1)+
     \left[\left(\sum_{j=1}^{\mu_1}\frac{1}{j}\right)\tau_{\mu_1-2}\tau_{\mu_2-1}\right.\right.\right.\right.\\ \left.\left.\left.\left.
         +\left(\sum_{j=1}^{\mu_2-1}\frac{1}{j}\right)\tau_{\mu_1-1}\tau_{\mu_2-2}\right](\gamma\cdot c_1^2)\right)\right)+\frac{1}{(i-3)!}\sum_{|\mu|=i-5}\frac{\mu_1!\mu_2!\mu_3!}{\Aut(\mu)}
     \tau_{\mu_1-1}\tau_{\mu_2-1}\tau_{\mu_3-1}(\gamma\cdot c_1^2)\right)\, .
 \end{multline}
 Notice the upper limits in the first harmonic sum is \(\mu_1\),
 the terms with \(j=\mu_1\) correspond to the third term of \eqref{eq:decay-intro}.

 We will study the right hand side of \eqref{eq:diffExp-long}
 using \eqref{eq:H-no-bump1} and \eqref{eq:H-no-bump2} in three steps corresponding to the $\tau$-degree.

 \vspace{15pt}
\noindent $\bullet$
 Consider first
 the \(\tau\)-linear terms.
 The \(\tau\)-linear terms of \eqref{eq:H-no-bump1} are
 \begin{multline}\label{eq:cubic-exp1}
 (\imath u)^k \frac{(i+k-2)!}{(i-3)!}\left(\frac{1}{(\imath u)^{i+k-2}}\left( \tau_{i+k-2}(\gamma)+\left(\sum_{j=1}^{i+k-2}\frac{1}{j}\right)\tau_{i+k-3}(c_1\cdot\gamma)\right.\right.
 \\ \left.\left.      +\left(\sum_{1\le j<l\le i+k-2}\frac{1}{jl}\right)\tau_{i+k-4}(c_1^2\cdot \gamma)\right)\right).\end{multline}
The \(\tau\)-linear terms of \eqref{eq:H-no-bump2} is more complicated:
\begin{multline}\label{eq:cubic-exp2}
  (\imath u)^{-i+2}\frac{(i+k-2)!}{(i-3)!}\left(\tau_{i+k-2}(\gamma)+\left(\sum_{j=i-2}^{i+k-2}\frac{1}{j}\right)\tau_{i+k-3}(\gamma\cdot c_1)\right.\\
  \left.+\left(\sum_{i-2\le j<l\le i+k-2}\frac{1}{jl}\right)
    \tau_{i+k-4}(\gamma\cdot c_1^2)+\left(\sum_{j=1}^{i-2}\frac{1}{j}\right)\left[
      \tau_{i+k-3}(\gamma \cdot c_1)+\left(\sum_{j=i-2}^{i+k-2}\frac{1}{j}\right)\tau_{i+k-4}(\gamma \cdot c_1^2)\right]\right.\\
  \left.+\left(\sum_{1\le j<l\le i-2}\frac{1}{jl}\right)
    \tau_{i+k-4}(\gamma\cdot c_1^2)\right)\, .
\end{multline}

The $\tau_{i+k-2}(\gamma)$  terms of \eqref{eq:cubic-exp1} and \eqref{eq:cubic-exp2} match, so cancel in the difference  \eqref{eq:diffL+}. The $\tau_{i+k-3}(\gamma\cdot c_1)$ terms in
\eqref{eq:cubic-exp1} and \eqref{eq:cubic-exp2} almost cancel: the difference is
\begin{equation}\label{diff11}
  (\imath u)^{-i+2}\frac{(i+k-2)!}{(i-2)!}\tau_{i+k-3}(\gamma\cdot c_1)\, .
  \end{equation}
For the $\tau_{i+k-4}(\gamma\cdot c_1^2)$ terms,
we split the prefactor  in 
\eqref{eq:cubic-exp2} as
$$ \sum_{j=1}^{i-2}\frac{1}{j}=
\frac{1}{i-2}+\sum_{j=1}^{i-3}\frac{1}{j}\, $$
and the last coefficient of \eqref{eq:cubic-exp2}
as
$$\sum_{1\le j<l\le i-2}\frac{1}{jl} =
\sum_{1\le j<l\le i-3}\frac{1}{jl}+ \frac{1}{i-2}\sum_{1\le j\le i-3}
\frac{1}{j}\, .$$
Then, we see 
the difference of the $\tau_{i+k-4}(\gamma\cdot c_1^2)$ terms
in \eqref{eq:cubic-exp1}
and \eqref{eq:cubic-exp2} is
\begin{equation}\label{diff22}
(\imath u)^{-i+2}\frac{(i+k-2)!}{(i-2)!}\left(\sum_{j=1}^{i+k-2}\frac{1}{j}\right)
\tau_{i+k-4}(c_1^2\cdot\gamma)\, .
\end{equation}

On the right hand side of equation \eqref{eq:diffExp-long}, the  \(\tau\)-linear terms
\eqref{diff11} and \eqref{diff22}
of the difference
 \eqref{eq:diffL+} 
are canceled with
$$\frac{(\imath u)^{k}}{2}\bigg[k!0!
  \Cor^\circ\left(\frac{\tch_{k+2}(c_1)\ch_i(\gamma)}{(\imath u)^{-2}}\right)\tau_{-2}(\pt)+0!k!\tau_{-2}(\pt)\Cor^\circ\left(
    \frac{\tch_{k+2}(c_1)\ch_i(\gamma)}{(\imath u)^{-2}}\right)\bigg]$$
using \(\tau_{-2}(\pt)=1\).
In fact, after applying  \eqref{eq:2bump-intro}, we find
\begin{multline*}
  (\imath u)^k\frac{k!}{(\imath u)^{-2}}\Cor^\circ(\tch_{k+2}(c_1)\tch_i(\gamma))
\\ = - \frac{(\imath u)^{k+1}}{(i-2)!}\bigg(\fra_{k+i-2}(c_1\gamma)
    +(\imath u)^{-1} \fra(c_1^2\gamma)\bigg)+\dots
  \\
=  -\frac{(\imath u)^{2-i}}{(i-2)!}\bigg((k+i-2)!\bigg(\tau_{k+i-3}(\gamma\cdot c_1)+
  (\sum_{i=1}^{k+i-3}\frac{1}{i})\tau_{k+i-4}(\gamma\cdot c_1^2)
  \bigg)+
  (k+i-3)!\tau_{k+i-4}(\gamma\cdot c_1^2)  \bigg)+\dots\, .
\end{multline*}
where the dots stand for the \(\tau\)-quadratic terms. The second equality
follows from 
the formula \eqref{eq:inv_tau2a}.

\vspace{15pt}
\noindent $\bullet$
Consider next the \(\tau \)-quadratic terms. We start with the 
quadratic terms of complex cohomological degree \(2\). The corresponding
terms from \eqref{eq:H-no-bump1}
are:
\begin{equation}\label{eq:diff-quadL1}
  (\imath u)^k\frac{(k+i-2)!}{(i-3)!}\sum_{|\mu|=i+k-3} \frac{(\imath u)^{-\mu_1-\mu_2+2}\mu_1!\mu_2!}{(\imath u)(i+k-2)!\Aut(\mu)}
  \tau_{\mu_1-1}\tau_{\mu_2-1}(\gamma \cdot c_1)\, .
\end{equation}
The computation of the corresponding terms in \eqref{eq:H-no-bump2} are more involved since the action of the shift operator \(\rmR_k\) depends on the
complex cohomological degree of the descendent:
\begin{multline}\label{eq:diff-quadL2}
  (\imath u)^{-i+4}\frac{1}{(i-2)!}\sum_{|\mu|=i-3}\frac{\mu_1!\mu_2!}
  {\Aut(\mu)}\left(\frac{(\mu_1+k)!}{(\mu_1-1)!}\tau_{\mu_1+k-1}(\gamma\cdot
    c_1)\tau_{\mu_2-1}(\pt) +\right.\\ \left.
    \frac{(\mu_2+k+1)!}{(\mu_2)!}\tau_{\mu_1-1}(\gamma\cdot c_1)\tau_{\mu_2-1+k}(\pt)    \right)\, .
    \end{multline}
The linear combination of the first term of \eqref{eq:diff-quadL2} with \(\mu_1+k-1=a\) and second term with \(\mu_1-1=a\)
is equal to the corresponding term of \eqref{eq:diff-quadL1} with \(\mu_1-1=a\).
Hence, these cancel in the difference. Similar cancellations happen with rest of
the terms. The resulting  difference  of \eqref{eq:diff-quadL1}
and \eqref{eq:diff-quadL2}
is 
\begin{multline}\label{nmm3}
  \frac{(\imath u)^{-i+4}}{(i-3)!}\left(\sum_{|\mu|=i+k-3,\, \mu_1\le k}\mu_1!\mu_2!\frac{\tau_{\mu_1-1}\tau_{\mu_2-1}}{\Aut(\mu)}(\gamma \cdot c_1)
  \right.\\\left. +
    \sum_{|\mu|=i+k-3,\, \mu_2\le k}\mu_1!\mu_2!\frac{\tau_{\mu_1-1}\tau_{\mu_2-1}}{\Aut(\mu)}(\gamma \cdot c_1)\right)
\end{multline}

We will cancel \eqref{nmm3}
with the \(\tau\)-quadratic terms of complex cohomological degree 2
in the sum
\begin{multline}\label{eq:const-quad}
  \frac{(\imath u)^{k}}{2}\bigg[\sum_{a+b=k+2}(a-2)!b!
  \Cor^\circ\left(\frac{\tch_{a}(c_1)\ch_i(\gamma)}{(\imath u)^{b-2}}\right)\tau_{b-2}(\pt)+a!(b-2)!\tau_{a-2}(\pt)\Cor^\circ\left(
    \frac{\tch_{b}(c_1)\ch_i(\gamma)}{(\imath u)^{a-2}}\right)\\-
  \sum_{a+b=k+2}(a-1)!(b-1)!\sum_{\bullet,\star}
  \alpha_{\bullet \star}\, \left(\Cor^\circ(\tch_{a}(\gamma_\bullet)\cdot
     \ch_i(\gamma))\Cor^\circ(\tch_{b}(\gamma_\star))\right.\\\left.+\Cor^\circ(\tch_{a}(\gamma_\bullet))\Cor^\circ(\tch_{b}(\gamma_\star)\cdot\ch_i(\gamma))\right)\bigg]\, .
 \end{multline}
 More precisely, the first and second terms of the last expression yield
 \[\frac{(\imath u)^{-i+4}}{2}\bigg[  b \frac{(a+i-4)!(b-1)!}{(i-2)!}\tau_{a+i-5}(\gamma\cdot c_1)\tau_{b-2}(\pt )+
     a \frac{(a-1)!(b+i-4)!}{(i-2)!}\tau_{a-2}(\pt)\tau_{b+i-5}(\gamma\cdot c_1)
   \bigg]\, ,\]
 and the last two terms yield{\footnote{The sum over $\bullet,\star$
     with coefficient $\alpha_{\bullet\star}$ is implicit.}}
 \begin{multline*}
   -\frac{(\imath u)^{-i-4}}{2}\bigg[  (a-1)\frac{(a+i-4)!(b-1)!}{(i-2)!}
   \tau_{a+i-5}(\gamma_\bullet\cdot \gamma)\tau_{b-2}(\gamma_\star)+\\
    (b-1)\frac{(a-1)!(b+i-4)!}{(i-2)!}
   \tau_{a-2}(\gamma_\bullet\cdot \gamma)\tau_{b+i-5}(\gamma_\star)
   \bigg]\, .\end{multline*}
The cancellation then follows from 
$$ \sum_{\bullet,\star}\alpha_{\bullet\star}\, (\gamma_\bullet\cdot \gamma)\otimes \gamma_\star=\pt\otimes (\gamma\cdot c_1) \ \ \ 
 \text{and}\ \ \  \sum_{\bullet,\star}\alpha_{\bullet\star}\, \gamma_\bullet\otimes(\gamma_\star\cdot \gamma)=(\gamma\cdot c_1)\otimes \pt \, .$$
 We have cancelled all \(\tau\)-quadratic terms of complex cohomological degree 2
 in \eqref{eq:diffExp-long}.

 Let us also observe that the terms of \eqref{eq:const-quad} with \(a=1\)
 and with \(b=1\) cancel out by exceptional bumping with \eqref{eq:2bump-minus-1-factorial} with the term of \eqref{nmm3} with
 \(\mu_1=k\) or \(\mu_2=k\).

A longer computation is needed to deal with \(\tau\)-quadratic terms of
complex cohomological degree 3. Since all such terms have
\(\gamma\cdot c_1^2\) as an argument, we drop the cohomology insertion
from the notation. The corresponding
terms from \eqref{eq:H-no-bump1} are:
\begin{multline}
  \label{eq:quad-P1}
  (\imath u)^k\frac{(k+i-2)!}{(i-3)!}\sum_{|\mu|=i+k-4}\frac{(\imath u)^{-\mu_1-\mu_2}}{\Aut(\mu)(i+k-2)!}\left(\mu_1!\mu_2!+(\mu_1+1)!\mu_2!\left(\sum_{j=1}^{\mu_1}\frac{1}{j}\right)\right.\\ \left.
    +\mu_1!(\mu_2+1)!\left(\sum_{j=1}^{\mu_2}\frac{1}{j}\right)\right)\tau_{\mu_1-1}\tau_{\mu_2-1}\, .
\end{multline}
The corresponding terms from \eqref{eq:H-no-bump2} are:
\begin{multline}
  \label{eq:quad-P2}
  \frac{(\imath u)^{-i+4}}{(i-2)!}\sum_{|\mu|=i-3}\frac{\mu_1!\mu_2!}{\Aut(\mu)}
    \left[\frac{(\mu_1+k)!}{(\mu_1-1)!}\left(\sum_{j=\mu_1}^{\mu_1+k}\frac{1}{j}\right)
      \tau_{\mu_1+k-2}\tau_{\mu_2-1}\right.\\
    \left.+\frac{(\mu_2+k)!}{(\mu_2-1)!}\left(\sum_{j=\mu_2}^{\mu_2+k}\frac{1}{j}\right)\tau_{\mu_1-1}\tau_{\mu_2+k-2}+\left(\sum_{j=1}^{\mu_1}\frac{1}{j}\right)\left(\frac{(\mu_1+k)!}{(\mu_1-1)!}\tau_{\mu_1+k-2}\tau_{\mu_2-1}\right.\right.
    \\ \left.\left.+\frac{(\mu_2+k+1)!}{\mu_2!}\tau_{\mu_1-2}\tau_{\mu_2+k-1}\right)+
      \left(\sum_{j=1}^{\mu_2-1}\frac{1}{j}\right)\left(\frac{(\mu_2+k)!}{(\mu_2-1)!}\tau_{\mu_1-1}\tau_{\mu_2+k-2}\right.\right.\\
    \left.\left.+\frac{(\mu_1+k+1)!}{\mu_1!}\tau_{\mu_1+k-1}\tau_{\mu_2-2}\right)\right]\, .
  \end{multline}
  
  The expression \eqref{eq:quad-P2} is simplified by the
  following strategy. We number the six $\tau$-quadratic terms by their
  order of occurrence in \eqref{eq:quad-P2}.
  The first term
  of \eqref{eq:quad-P2} combines with the third term.  The second term
  combines with the fifth term.
  We also
  split off the summands with \(j=\mu_1+k\) and \(j=\mu_2+k\) from
  the first and  second terms 
  respectively, as well as the summand with \(j=\mu_1\) from the third term. Then, \eqref{eq:quad-P2} equals
  \begin{multline}
    \label{eq:quad-P2-1}
    \frac{(\imath u)^{-i+4}}{(i-2)!}\left(\sum
      \mu_1\frac{(\mu_1+k-1)!\mu_2!}{\Aut(\mu)}\tau_{\mu_1+k-2}\tau_{\mu_2-1}+
      \mu_2\frac{\mu_1!(\mu_2+k-1)!}{\Aut(\mu)}\tau_{\mu_1-1}\tau_{\mu_2+k-2}\right.\\
    \left.+      \mu_1\frac{(\mu_1+k)!\mu_2!}{\Aut(\mu)}\left(\sum_{j=1}^{\mu_1+k-1}\frac{1}{j}\right)\tau_{\mu_1+k-2}\tau_{\mu_2-1}
    +\mu_2\frac{\mu_1!(\mu_2+k)!}{\Aut(\mu)}\left(\sum_{j=1}^{\mu_2+k-1}\frac{1}{j}\right)\tau_{\mu_1-1}\tau_{\mu_2+k-2}\right.\\ \left. +(\mu_2+k+1)\frac{\mu_1!(\mu_2+k)!}{\Aut(\mu)}
      \left(\sum_{j=1}^{\mu_1-1}\frac{1}{j}\right)\tau_{\mu_1-2}\tau_{\mu_2+k-1}\right.\\
\left.+    (\mu_1+k+1)\frac{(\mu_1+k)!\mu_2!}{\Aut(\mu)}
  \left(\sum_{j=1}^{\mu_2-1}\frac{1}{j}\right)\tau_{\mu_1+k-1}\tau_{\mu_2-2}\right.\\
\left. +\frac{(\mu_1+k)!\mu_2!}{\Aut(\mu)}\tau_{\mu_1+k-2}\tau_{\mu_2-1}+
\frac{(\mu_1-1)!(\mu_2+k+1)!}{\Aut(\mu)}\tau_{\mu_1-2}\tau_{\mu_2+k-1}\right),
  \end{multline}
  where the sum is over $\mu_1\geq \mu_2$, \(|\mu|=i-3\).

  
Let us fix an integer \(a\) satisfying \(a>k-2\).
We observe that the sum of the first term
from the first line \eqref{eq:quad-P2-1} 
with \(\mu_1=a+2-k\) and the second term in the last line with
\(\mu_2=a+1-k\) will cancel with the first term of \eqref{eq:quad-P1} with \(\mu_1=a+1\). Also, the sum of the second term from the first line with
\(\mu_2=a+2-k\) and the first term of the last line with
\(\mu_1=a+1-k\) 
will cancel with the first term of \eqref{eq:quad-P1} with \(\mu_2=a+1\).

Similarly, the sum
  of the first term for the second line of \eqref{eq:quad-P2-1} with \(\mu_1=a+2-k\)
  and the first term from the third line of \eqref{eq:quad-P2-1} with \(\mu_1=a+2\)
  cancels with the second term of \eqref{eq:quad-P1} with \(\mu_1=a+1\).
  Finally, the sum of the second term from the second line
  of \eqref{eq:quad-P2-1} with \(\mu_1=a+1\) and the last term from
  the last line of \eqref{eq:quad-P2-1} with \(\mu_1=a+1-k\) cancels with
  the last term of \eqref{eq:quad-P1} with \(\mu_1=a+1\).

  After all of these cancellations, we are left with
  \begin{multline}
  \label{eq:quad-P-diff}
  \sum \frac{(\imath u)^{-i+4}\mu_1!\mu_2!}{\Aut(\mu)(i-3)!}\left(1+(\mu_1+1)\left(\sum_{j=1}^{\mu_1}\frac{1}{j}\right)
    +(\mu_2+1)\left(\sum_{j=1}^{\mu_2}\frac{1}{j}\right)\right)\tau_{\mu_1-1}\tau_{\mu_2-1}\, ,
\end{multline}
where \(\sum\) is the sum of two sub sums: the first is over \(\mu_1+\mu_2=i+k-4\),
\(\mu_1\le k\) and the second is over \(\mu_1+\mu_2=i+k-4\),
\(\mu_2\le k\).

In the difference \eqref{eq:diffExp-long}, the  expression \eqref{eq:quad-P-diff}
is canceled by
the corresponding $\tau$-quadratic terms of complex cohomological degree 3
of  \eqref{eq:const-quad}.
More precisely, the first and second terms of \eqref{eq:const-quad} yield
 \[\frac{(\imath u)^{-i+4}}{2}\bigg[  b \frac{(a+i-4)!(b-1)!}{(i-2)!}\tau_{a+i-6}(\gamma\cdot c^2_1)\tau_{b-2}(\pt )+
     a \frac{(a-1)!(b+i-4)!}{(i-2)!}\tau_{a-2}(\pt)\tau_{b+i-6}(\gamma\cdot c_1^2)
     \bigg]\, ,\]
   after we apply \eqref{eq:2bump-intro}  to these terms and
   drop \(\tau\)-cubic terms and the terms of cohomological
   degree other than \(3\).
   In particular, the factors in first and second terms
   are produced by
   the \(\fra\)-linear term of \eqref{eq:2bump-intro} 
   proportional to \(c_1\).
   
 The last two terms of \eqref{eq:const-quad} yield{\footnote{The sum over $\bullet,\star$
     with coefficient $\alpha_{\bullet\star}$ is implicit.}}
 \begin{multline*}
   -\frac{(\imath u)^{-i+4}}{2}\bigg[  (a-1)\frac{(a+i-4)!(b-1)!}{(i-2)!}
   \tau_{a+i-6}(\gamma_\bullet\cdot \gamma)\tau_{b-2}(\gamma_\star\cdot c_1)+\\
    (b-1)\frac{(a-1)!(b+i-4)!}{(i-2)!}
   \tau_{a-2}(\gamma_\bullet\cdot \gamma)\tau_{b+i-6}(\gamma_\star\cdot c_1)
   \bigg]\, ,\end{multline*}
 after we apply only the parts of \eqref{eq:decay-intro} and
 \eqref{eq:2bump-intro}
 that are not \(c_1\)-proportional, then we use the
 \(\fra\) to \(\tau\) the transition formula \eqref{eq:inv_tau2a}
 and drop the \(\tau\) cubic terms and the terms of
 homological degree other than \(3\).

Together these two sums  combine and cancel the first term of \eqref{eq:quad-P-diff}. To cancel the last two terms of 
\eqref{eq:quad-P-diff}, we follow the same pattern.
We first apply \(c_1^0\)-part of \eqref{eq:2bump-intro}  to
the first and second terms of \eqref{eq:const-quad} and then
apply \(c_1\)-part of the \(\fra\) to \(\tau\) transition formula \eqref{eq:inv_tau2a}. Next, we apply the
\(c_1^0\)-parts of \eqref{eq:decay-intro} and the \eqref{eq:2bump-intro}
 and the \(c_1^1\)-part of
\eqref{eq:inv_tau2a} to the last two terms of \eqref{eq:const-quad}.
After dropping the \(\tau\)-cubic terms and the terms
of complex cohomological degree other than \(3\), we exactly cancel the remaining terms
of \eqref{eq:quad-P-diff}.

\vspace{15pt}
\noindent $\bullet$
Consider finally the \(\tau\)-cubic terms. The cohomological arguments
of these terms are \(c_1^2\cdot \gamma\), so as in the previous computation,
we drop the cohomology insertion from the notation.

After expanding the corresponding terms of \eqref{eq:H-no-bump1}, we obtain:
\begin{equation}\label{eq:exp-easy1}
  \frac{(\imath u)^{-i}(i+k-2)}{(i-3)!}\sum_{|\mu|=i+k-5}\frac{\mu_1!\mu_2!\mu_3!}{\Aut(\mu)}\tau_{\mu_1-1}\tau_{\mu_2-1}\tau_{\mu_3-1}\, .\end{equation}
On the other hand, the corresponding terms from \eqref{eq:H-no-bump2} are
more complicated:
\begin{multline*}
  \frac{(\imath u)^{-i}}{(i-3)!}\sum_{|\mu|=i-5}\frac{\mu_1!\mu_2!\mu_3!}{\Aut(\mu)}\left(\frac{(\mu_1+k+1)!}{\mu_1!}\tau_{\mu_1+k-1}\tau_{\mu_2-1}\tau_{\mu_3-1}\right.\\\left.+\frac{(\mu_2+k+1)!}{\mu_2!}\tau_{\mu_1-1}\tau_{\mu_2+k-1}\tau_{\mu_3-1}+\frac{(\mu_3+k+1)!}{\mu_3!}\tau_{\mu_1-1}\tau_{\mu_2-1}\tau_{\mu_3+k-1}\right),
\end{multline*}
In \eqref{eq:exp-easy1}, we have
\(i+k-2=\sum_{j=1}^3 (\mu_j+1)\). Therefore,
the difference between the last two expressions  is the sum of the monomials
\begin{equation}\label{eq:coeff-diff}
  \left(\sum_{j, \mu_j\le k-2}(\mu_j+2)\right)\frac{(\imath u)^{-i} \mu_1!\mu_2!\mu_3!}{(i-3)!\Aut(\mu)}\tau_{\mu_1-1}\tau_{\mu_2-1}\tau_{\mu_3-1}\, .\end{equation}


Let us restrict our attention to the case when \(i\) is  bigger than \(k\), the other cases are analogous.
After applying the reaction from the last line of (\ref{eq:2bump-intro}),
we obtain a formula for the expressions in the second line of (\ref{eq:diffExp-long}):
\begin{multline}\label{eq:bump-split1}
 (a-2)!b!
 \Cor^\circ\left(\frac{\tch_{a}(c_1)\tch_i(\gamma)}{(\imath u)^{b-2}}\right)\tau_{b-2}(\pt)=\tau\mbox{-quadratic terms }+\\
 \frac{(\imath u)^{-k-i}b!}{(i-2)!}\left(\sum_{|\mu|=a+i-6}\max(\max(\mu_1+1,\mu_2+1),i-2)\frac{\mu_1!\mu_2!}{\Aut(\mu)}\tau_{\mu_1-1}\tau_{\mu_2-1}\right)\tau_{b-2}\, ,
\end{multline}
\begin{multline}
  \label{eq:bump-split2}
  a!(b-2)!\tau_{a-2}(\pt)\Cor^\circ\left(
    \frac{\tch_{b}(c_1)\tch_i(\gamma)}{(\imath u)^{a-2}}\right)=\tau\mbox{-quadratic terms }+
\\  \frac{(\imath u)^{-k-i}a!}{(i-2)!}\left(\sum_{|\mu|=b+i-6}\max(\max(\mu_1+1,\mu_2+1),i-2)\frac{\mu_1!\mu_2!}{\Aut(\mu)}\tau_{\mu_1-1}\tau_{\mu_2-1}\right)\tau_{a-2}\, .
\end{multline}

The terms of \eqref{eq:bump-split1} and \eqref{eq:bump-split2} with
\(\max(\mu_1+1,\mu_2+1)\le i-2\) contribute the monomials:
\begin{equation}
  (\imath u)^{-i}b\cdot\frac{\mu_1!\mu_2!(b-1)!}{(i-3)!}\tau_{\mu_1-1}\tau_{\mu_2-1}\tau_{b-2}\, ,\ \quad (\imath u)^{-i}a\cdot\frac{\mu_1!\mu_2!(a-1)!}{(i-3)!}\tau_{\mu_1-1}\tau_{\mu_2-1}\tau_{a-2}\, .\end{equation}
Note \(a+b=k+2\) in  (\ref{eq:diffExp-long}).
Since \(\max(\mu_1+1,\mu_2+1)\le i-2\) and \(|\mu|=a+i-2\) or \(|\mu|=b+i-2\)
we imply that \(\mu_1+1,\mu_2+1\ge k-1\). Thus
 the corresponding terms of \eqref{eq:bump-split1} and \eqref{eq:bump-split2} cancel with the monomials
\eqref{eq:coeff-diff} such that there is only one \(j\) with \(\mu_j\le k-2.\)

The terms in \eqref{eq:bump-split1} and \eqref{eq:bump-split2} with \(\max(\mu_1+1,\mu_2+1)> i-2\) yield terms:
\[(\imath u)^{-i}b(\mu'+1)\cdot\frac{\mu_1!\mu_2! (b-1)!}{(i-2)!}\tau_{\mu_1-1}\tau_{\mu_2-1}\tau_{b-2}\, ,\quad\
  (\imath u)^{-i}a(\mu'+1)\cdot\frac{\mu_1!\mu_2!(a-1)!}{(i-2)!}\tau_{\mu_1-1}\tau_{\mu_2-1}\tau_{a-2}\, ,\]
where \(\mu'=\max(\mu_1,\mu_2)\).
Both of these terms are of the form:
\begin{equation}\label{cck23}
  (\imath u)^{-i}(\mu_1+1)(\mu_2+1)\cdot \frac{\mu_1!\mu_2!\mu_3!}{(i-2)!}\tau_{\mu_1-1}\tau_{\mu_2-1}\tau_{\mu_3-1}\, ,
  \end{equation}
with \(\mu_1+1>i-2\) and \(|\mu|=i+k-4\).
Since we assumed that
\(i> k\), we have \(\mu_1+\mu_2<k-4\) in \eqref{cck23}. The discussed terms
therefore combine  to yield the sum of monomials:
\begin{equation}
  \label{eq:small-terms1}
 (\imath u)^{-i}(\mu_1+\mu_2+2)(\mu_3+1) \cdot \frac{\mu_1!\mu_2!\mu_3!}{(i-2)!}\tau_{\mu_1-1}\tau_{\mu_2-1}\tau_{\mu_3- 1}\, ,
\end{equation}
where \(\mu_3+1>i-2\) and \(\mu_1,\mu_2\le k-2\).

The terms
\eqref{eq:small-terms1}
combine
with the terms from the expansion of the last two lines of \eqref{eq:diffExp-long}.
Indeed,
since \(\gamma_\bullet\), \(\gamma_\star\) in the last two lines
of \eqref{eq:diffExp-long} are of complex cohomological degree \(2\),
the \(\tau\)-terms result from use of the
\(c_1^1\)-part of \eqref{eq:decay-intro} and of
the \(c_1^0\)-part of \eqref{eq:2bump-intro}.
The expansion of these terms is
a sum of monomials
\begin{equation}
  \label{eq:bump-shift1}
  -(\imath u)^{-i}(b-1)(a-1)\frac{(a+i-4)!\mu_1!\mu_2!}{(i-2)!}
    \tau_{a+i-5}\tau_{\mu_1-1}\tau_{\mu_2-1}\, ,
\end{equation}
where \(|\mu|=b-3\).

The combination of  \eqref{eq:bump-shift1} with \(a=\mu_3-i+4\), \(b=\mu_1+\mu_2+3\)
and \eqref{eq:small-terms1}
matches \eqref{eq:coeff-diff}, since,
in \eqref{eq:bump-shift1}, we have
\[(b-1)(a-1)=(\mu_1+\mu_2+2)(\mu_3-i+3)=(\mu_1+\mu_2+2)(\mu_3+1)-(\mu_1+\mu_2+2)(i-2)\, .\]
We have cancelled all $\tau$-cubic terms.

The assumption
\(\tch_i(\gamma)\in \mathbb{D}_\PT^{X\bigstar}\) implies \(i\ge 3\). Therefore,
in the above computations, we do not see negative factorials in denominators.
\end{proof}

\subsection{Proof of Theorem \ref{thm:intertw}
for $\mathbb{D}^1_\PT \cap \mathbb{D}^{X\bigstar}_{\PT}$
}\label{mono9}
Theorem \ref{thm:intertw}, for all
$D\in \mathbb{D}^1_\PT \cap \mathbb{D}^{X\bigstar}_{\PT}$,
is an immediate consequence of Proposition \ref{prop:no-bump} for singletons
by the following simple argument. Let
$$D = \prod_{i=1}^m \tch_{k_i}(\gamma_i) \in  \mathbb{D}^1_\PT \cap \mathbb{D}^{X\bigstar}_{\PT}\, ,$$
where $\gamma_i\gamma_j=0\in H^*(X)$ for all $i\neq j$.

By definition, for $k\geq 1$,
\begin{eqnarray*}
  \Cor^\bullet\left(\rmL^\PT_k(D)\right) & = & \Cor^\bullet\Big(
      \rmL^\PT_k\big(  \prod_{i=1}^m \tch_{k_i}(\gamma_i)           \big)\Big)\\
  & = &\Cor^\bullet\Bigg(
        \CT_k  \prod_{i=1}^m \tch_{k_i}(\gamma_i)   + \sum_{j=1}^m \rmR_k(\tch_{k_j}(\gamma_j)) \prod_{i\neq j} \tch_{k_i}(\gamma_i)
        \Bigg)\, .
\end{eqnarray*}
Since $\gamma_i\gamma_j=0$ for $i\neq j$,
$$\Cor^\bullet\Big( \CT_k  \prod_{i=1}^m \tch_{k_i}(\gamma_i)\Big)=
(-m+1)\Cor^\bullet (\CT_k)  \prod_{i=1}^m \Cor^\bullet(\tch_{k_i}(\gamma_i))
+ \sum_{j=1}^m \Cor^\bullet(\CT_k \tch_{k_j}(\gamma_j)) \prod_{i\neq j} \Cor^\bullet(\tch_{k_i}(\gamma_i))\,.$$
By Proposition \ref{prop:no-bump},
\begin{eqnarray*}
(\imath u)^{-k}\, 
  \widetilde{\rmL}^{\GW}_k(\Cor^\bullet(\tch_i(\gamma_i))) & = &
           \Cor^\bullet\left(\rmL^\PT_k(\tch_i(\gamma_i))\right) \\
                                                         & = &
                                                               \Cor^\bullet\left( \CT_k\right) \Cor^\bullet\big( \tch_i(\gamma_i)\big) +
         \Cor^\bullet\big( \CT_k\tch_i(\gamma_i)\big) +                          \Cor^\bullet\left(\rmR_k\big(\tch_{k_i}(\gamma_i)\big)\right)\, .                           
\end{eqnarray*}
We conclude
\begin{multline*}
  \Cor^\bullet\left(\rmL^\PT_k(D)\right)  =\\                                               \sum_{j=1}^m (\imath u)^{-k}\, \widetilde{\rmL}^{\GW}_k(\Cor^\bullet(\tch_j(\gamma_j))) \prod_{i\neq j} \Cor^\bullet(\tch_{k_i}(\gamma_i))- (m-1)  \Cor^\bullet (\CT_k)  \prod_{i=1}^m \Cor^\bullet(\tch_{k_i}(\gamma_i)) \, . \end{multline*}
On the other hand,
\begin{multline*}
  (\imath u)^{-k}\, \widetilde{\rmL}^{\GW}_k
  \left( \Cor^\bullet(D)\right)
  =\\ 
\sum_{j=1}^m (\imath u)^{-k}\, \widetilde{\rmL}^{\GW}_k(\Cor^\bullet(\tch_j(\gamma_j))) \prod_{i\neq j} \Cor^\bullet(\tch_{k_i}(\gamma_i))- (m-1)(\imath u)^{-k}\left(
  \frac{(\imath u)^2}{2}\right)\CT_k  \prod_{i=1}^m \Cor^\bullet(\tch_{k_i}(\gamma_i)) \, .
                                              \end{multline*}
The proof is completed by applying \eqref{eq:ctrel}.\qed

\section{Intertwining III: Interacting insertions}
\label{sec:bumping-computations}

\subsection{Overview}
We complete here the proof of Theorem \ref{thm:intertw}.
Since non-interacting insertions have already
been treated in Section \ref{nonintS}, we must address the interacting
cases. In the desired equation,
\begin{equation}\label{ll3355}
  \Cor^\bullet\circ \rmL_k^\PT(D)=(\imath u)^{-k}\, \widetilde{\rmL}_k^\GW\circ \Cor^\bullet(D)\, ,
  \end{equation}
the stable pairs 
descendent insertions of  $D \in \mathbb{D}^{X\bigstar}_{\PT}$ can interact with each other
via the $\GW/\PT$ descendent correspondence on both sides of \eqref{ll3355}.
In addition, the stable pairs descendents
can also interact with constant term of the Virasoro constraints on the left
side. We must control all of  these interactions.

\subsection{Interactions among two insertions}
We start with results which control the
interactions of two descendent insertions. 

\begin{proposition} \label{prop:double-bump1} Let \(\gamma'\in H^2(X)\), \(\gamma''\in H^4(X)\), and let $i\ge 3$, $j\ge 2$.
  Then, for \(k\ge -1\), we have 
  \[(\imath u )^k\, \Cor^\circ(\rmR_k(\tch_i(\gamma')\tch_j(\gamma'')))=\rmR_k(\Cor^\circ(\tch_i(\gamma')\tch_j(\gamma'')))\, .\]
\end{proposition}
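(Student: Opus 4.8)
The plan is to exploit the special cohomological degrees of $\gamma'$ and $\gamma''$ to collapse both sides of the asserted identity to a single point-class descendent, and then to match scalar coefficients. The starting observation is that $\rmR_k$ is a derivation on $\bbD^X_\PT$ which acts diagonally on the renormalized descendents: from $\tch_i(\gamma)=\ch_i(\gamma)+\tfrac{1}{24}\ch_{i-2}(\gamma\cdot c_2)$ and the definition of $\rmR_k$, both summands shift with the identical Pochhammer factor $\prod_{n=0}^k(i+d-3+n)$ for $\gamma\in H^{2d}(X)$ (the empty product being $1$ at $k=-1$), so that
\[
\rmR_k\big(\tch_i(\gamma)\big)=\Big(\prod_{n=0}^k(i+d-3+n)\Big)\,\tch_{i+k}(\gamma)\, .
\]
Applying this to $\gamma'\in H^2(X)$ ($d=1$) and $\gamma''\in H^4(X)$ ($d=2$), the Leibniz rule writes $\rmR_k(\tch_i(\gamma')\tch_j(\gamma''))$ as the sum of $\tfrac{(i+k-2)!}{(i-3)!}\,\tch_{i+k}(\gamma')\tch_j(\gamma'')$ and $\tfrac{(j+k-1)!}{(j-2)!}\,\tch_i(\gamma')\tch_{j+k}(\gamma'')$.

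The key simplification is that $\gamma'\gamma''\in H^6(X)$ is a multiple of the point class, so $\gamma'\gamma''\cdot c_1\in H^8(X)=0$. Consequently the two-bump formula \eqref{eq:2bump-intro} collapses to its first term,
\[
\Cor^\circ\big(\tch_a(\gamma')\tch_b(\gamma'')\big)=-\frac{(\imath u)^{-1}}{(a-2)!(b-2)!}\,\fra_{a+b-4}(\gamma'\gamma'')\, ,
\]
for any $a\geq 3$, $b\geq 2$. Feeding the two products above into this formula and combining the two factorial prefactors via $(i-2)+(j+k-1)=i+j+k-3$, the left-hand side $(\imath u)^k\Cor^\circ(\rmR_k(\tch_i(\gamma')\tch_j(\gamma'')))$ becomes $-\tfrac{(\imath u)^{k-1}(i+j+k-3)}{(i-2)!(j-2)!}\,\fra_{i+j+k-4}(\gamma'\gamma'')$. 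For the right-hand side I would first pass $\Cor^\circ(\tch_i(\gamma')\tch_j(\gamma''))$ to the $\tau$-basis: since $\gamma'\gamma''\in H^6(X)$, formula \eqref{eq:tau2a} degenerates to the one-term relation $\fra_m(\gamma'\gamma'')=\tfrac{m!}{(\imath u)^{m-1}}\tau_{m-1}(\gamma'\gamma'')$, so the correspondence of the product is proportional to $\tau_{i+j-5}(\gamma'\gamma'')$. Again because $(\gamma'\gamma'')\cdot c_1^{\,\ell}=0$ for $\ell\geq 1$, only the $\rmR_k^0$ summand of the Gromov-Witten operator $\rmR_k$ survives, and it shifts $\tau_{i+j-5}(\gamma'\gamma'')$ by the factor $\prod_{n=0}^k(i+j-3+n)=\tfrac{(i+j+k-3)!}{(i+j-4)!}$.

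Carrying out these two evaluations and expressing both answers in terms of $\tau_{i+j+k-5}(\gamma'\gamma'')$, each side reduces to the single scalar coefficient $-\tfrac{(i+j+k-3)!}{(i-2)!(j-2)!(\imath u)^{i+j-4}}$, which proves the identity. The factorial bookkeeping is essentially the only content here, and the collapse to point classes removes every quadratic and cubic bump term that makes the companion Proposition \ref{prop:double-bump2} delicate; for that reason I expect the present case to be the easy half of the two-insertion analysis, with the genuine obstacle deferred to the $\gamma',\gamma''\in H^2(X)$ setting where $\gamma'\gamma''\in H^4(X)$ and no such collapse occurs. The only point demanding care is compatibility at the ends of the index ranges: the hypotheses $i\geq 3$ and $j\geq 2$ guarantee that $(i-3)!$, $(i-2)!$ and $(j-2)!$ are all finite, so no genuine negative factorials arise, and the extremal cases (where $\fra_0=0$ forces the left side to vanish) match the right side under the conventions $\tau_{-1}(\theta)=0$ for $\theta\in H^{>2}(X)$ already in force.
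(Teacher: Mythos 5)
Your proof is correct and follows essentially the same route as the paper's: apply $\rmR_k$ by Leibniz on the stable-pairs side, use $\gamma'\gamma''\cdot c_1=0$ to collapse the two-bump formula to its $\fra$-linear term, pass to the $\tau$-basis, and match the single surviving coefficient $-\tfrac{(i+j+k-3)!}{(i-2)!\,(j-2)!\,(\imath u)^{i+j-4}}$ on both sides. Your added remarks — that $\rmR_k$ commutes with the renormalization $\ch\mapsto\tch$, and that the extremal case $i=3$, $j=2$, $k=-1$ requires the convention $\tau_{-1}(\theta)=0$ for $\theta\in H^{>2}(X)$ — only make explicit points the paper leaves implicit.
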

\begin{proof}
  We first compute the left side of the equation. After
  applying the shifts,  we obtain
  \[\rmR_k(\tch_i(\gamma')\tch_j(\gamma''))
    =\frac{(i+k-2)!}{(i-3)!}\tch_{i+k}(\gamma')\tch_j(\gamma'')+\frac{(j+k-1)!}{(j-2)!}\tch_i(\gamma')\tch_{j+k}(\gamma'')\, .\]
  We apply the correspondence to the both terms:
  \begin{eqnarray*}
    \Cor^\circ(\rmR_k(\tch_i(\gamma')\tch_j(\gamma'')))&=&(\imath u)^{-1}\left(\frac{1}{(i-3)!(j-2)!}+\frac{(j+k-1)}{(i-2)!(j-2)!}\right)
             \fra_{i+j+k-4}(\gamma'\gamma'')\\ &=&
               (\imath u)^{-i-j-k+4}\frac{(i+j+k-3)!}{(i-2)!(j-2)!}
               \tau_{i+j+k-5}(\gamma'\gamma'')\, .\end{eqnarray*}
 The right side of the equation is
 \begin{eqnarray*}\rmR_k(\Cor^{\circ}(\tch_i(\gamma')\tch_j(\gamma'')))
   &=&\rmR_k\bigg(\frac{(\imath u)^{-1}}{(i-2)!(j-2)!}\fra_{i+j-4}(\gamma'\gamma'')\bigg)\\ &=&
   (\imath u)^{-i-j+4}
                                                                                       \frac{(i+j-4)!}{(i-2)!(j-2)!)}\rmR_k(\tau_{i+j-5}(\gamma'\gamma''))\\
   &=&(\imath u)^{-i-j+4}\frac{(i+j+k-3)!}{(i-2)!(j-2)!}\tau_{i+j+k-5}(\gamma'\gamma'')\, ,
 \end{eqnarray*}
 which matches the left side.
\end{proof}

\begin{proposition}\label{prop:double-bump2}
  Let \(\gamma',\gamma''\in H^2(X)\),   and let $i,j\ge 3$.  
  Then, for \(k\ge -1\), we have
  \begin{multline}\label{eq:double-bump-H}
    (\imath u )^k\, \Cor^\circ(\rmR_k(\tch_i(\gamma')\tch_j(\gamma'')))-\rmR_k(\Cor^\circ(\tch_i(\gamma')\ch_j(\gamma'')))=\\
    \sum_{a+b=k+2}(a-2)!b!\, \Cor^\circ(\tch_{i}(\gamma')\cdot\tch_{j}(\gamma'')\cdot\tch_{a}(c_1))\,\Cor^\circ(\ch_{b}(\pt))\\+
    a!(b-2)!\, \Cor^\circ(\tch_{i}(\gamma')\cdot\tch_{j}(\gamma'')\cdot\tch_{b}(c_1))\,
    \Cor^\circ(\ch_{a}(\pt))\\
-    \sum_{a+b=k+2}(a-1)!(b-1)!\, \sum_{
    \bullet,\star}\alpha_{\bullet\star}\left(\Cor^\circ(\tch_{a}(\gamma_\bullet)\cdot
    \tch_i(\gamma'))\Cor^\circ(\tch_{b}(\gamma_\star)\tch_j(\gamma''))\right.\\\left.
    +\, \Cor^\circ(\tch_{a}(\gamma_\bullet)\tch_j(\gamma''))\Cor^\circ(\tch_{b}(\gamma_\star)\cdot \tch_i(\gamma'))\right)\, .
  \end{multline}
\end{proposition}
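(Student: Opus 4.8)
The plan is to prove the identity by expanding both sides in the renormalized Gromov--Witten descendents $\fra$, passing to the standard descendents $\tau$ via the inversion formula \eqref{eq:inv_tau2a} only at the stage where the derivation $\rmR_k$ must act, and then matching coefficients degree by degree in the number of $\tau$-factors. The computation is structurally parallel to the case $\gamma\in H^2(X)$ of Proposition \ref{prop:no-bump}; the genuinely new feature is that the two insertions $\tch_i(\gamma')$ and $\tch_j(\gamma'')$ interact through the two-bump formula \eqref{eq:2bump-intro}, which is why a triple-bump correction and an $\alpha_{\bullet\star}$-correction appear on the right.

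First I would expand the left side. Since $\rmR_k$ is a derivation acting on $H^2$-insertions by $\rmR_k(\tch_i(\gamma'))=\frac{(i+k-2)!}{(i-3)!}\tch_{i+k}(\gamma')$, the first term becomes
\[(\imath u)^k\,\Cor^\circ\!\left(\tfrac{(i+k-2)!}{(i-3)!}\tch_{i+k}(\gamma')\tch_j(\gamma'')+\tfrac{(j+k-2)!}{(j-3)!}\tch_i(\gamma')\tch_{j+k}(\gamma'')\right),\]
to which I apply \eqref{eq:2bump-intro} summand by summand. For the second term I apply \eqref{eq:2bump-intro} to $\tch_i(\gamma')\tch_j(\gamma'')$ first, convert the resulting $\fra$-monomials to $\tau$ by \eqref{eq:inv_tau2a}, and only then let $\rmR_k$ act as a derivation. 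Because \eqref{eq:2bump-intro} contains only $\fra$-linear terms (with arguments $\gamma'\gamma''\in H^4$ and $\gamma'\gamma''c_1\in H^6$) and one $\fra$-quadratic term (with argument $\gamma'\gamma''c_1\in H^6$), the entire left side is a combination of $\tau$-linear and $\tau$-quadratic monomials; this quadratic bound is the structural constraint the right side must meet.

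Next I would expand the right side, checking that it too is $\tau$-quadratic. In the first two lines the triple product is evaluated by \eqref{eq:triple-bump-intro}, which is $\fra$-linear with point-class argument since $\gamma'\gamma''c_1\in H^6$, and is multiplied by the point descendent $\Cor^\circ(\ch_b(\pt))=\fra_{b-1}(\pt)/(b-1)!$; these yield $\tau$-quadratic terms, together with $\tau$-linear boundary terms when $b$ is small and the point descendent degenerates to a scalar under $\tau_{-2}(\pt)=1$. In the last two lines the insertions $\gamma_\bullet,\gamma_\star$ lie in $H^4$, so each factor $\Cor^\circ(\tch_a(\gamma_\bullet)\tch_i(\gamma'))$ is a two-bump of an $H^4$ and an $H^2$ class whose product is already the point class; by \eqref{eq:2bump-intro} all $c_1$-corrections vanish and the factor is simply $-\frac{(\imath u)^{-1}}{(a-2)!(i-2)!}\fra_{a+i-4}(\gamma_\bullet\gamma')$. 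Thus each $\alpha$-term is a product of two $\fra$-linear factors, again $\tau$-quadratic, and I collapse the $\bullet,\star$ sums using the contraction identities $\sum_{\bullet,\star}\alpha_{\bullet\star}(\gamma_\bullet\cdot\gamma)\otimes\gamma_\star=\pt\otimes(\gamma\cdot c_1)$ and $\sum_{\bullet,\star}\alpha_{\bullet\star}\gamma_\bullet\otimes(\gamma_\star\cdot\gamma)=(\gamma\cdot c_1)\otimes\pt$.

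It then remains to match the $\tau$-linear and $\tau$-quadratic coefficients, and this final matching is where I expect the main obstacle to lie. The $\fra$-quadratic part of \eqref{eq:2bump-intro} carries the weight $\max(\max(k_1,k_2),\max(\mu_1+1,\mu_2+1))$, whereas the triple-bump \eqref{eq:triple-bump-intro} carries the flat weight $|k|$ and $\rmR_k$ contributes the factors $[\,\cdot\,]^k_j$; reconciling these against each other, after shifting indices and splitting the harmonic sums exactly as in Proposition \ref{prop:no-bump}, is the delicate combinatorial step. I would organize the quadratic terms by the cohomological degree of their arguments and handle the boundary summands — those with $a$ or $b$ small, where the convention $\tau_{-2}(\pt)=1$ and the exceptional bumping \eqref{eq:2bump-minus-1-factorial} for $(-1)!\,\ch_1(c_1)$ are invoked — as separate cases. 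Since the hypothesis $i,j\ge 3$ forces all relevant arguments to stay nonnegative, no negative factorials appear in any denominator, so every term in the computation is well defined.
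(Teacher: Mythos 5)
Your proposal reproduces the paper's strategy exactly: expand $(\imath u)^k\,\Cor^\circ(\rmR_k(\cdot))$ by applying the derivation on the stable pairs side and then the two-bump formula \eqref{eq:2bump-intro}; expand $\rmR_k(\Cor^\circ(\cdot))$ by applying \eqref{eq:2bump-intro} first, converting to $\tau$ via \eqref{eq:inv_tau2a}, and only then acting by the derivation; evaluate the right side by the triple bump \eqref{eq:triple-bump-intro} against the point factor and by products of point-class two-bumps collapsed with the contraction identities; and match coefficients by $\tau$-degree and cohomological degree. All of these structural claims agree with the paper's proof (its expansions \eqref{eq:CR}, \eqref{eq:RC}, and the subsequent treatment of the four lines of \eqref{eq:double-bump-H}), including the observation that the $H^4$-times-$H^2$ bumps have no $c_1$-corrections and the role of the $b=0$ boundary term in killing the leftover $\tau$-linear degree-$3$ monomial.

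However, the step you defer as ``the delicate combinatorial step'' is not a detail to be organized later --- it is the mathematical content of the proposition, and your proposal does not carry it out. In the paper, after all expansions, the $\tau$-quadratic terms of cohomological degree $3$ reduce to an identity, for every $\mu$ with $|\mu|=i+j+k-6$, between \eqref{eq:bump-fun1}, in which the max-weight $f(i,j;\mu_1,\mu_2)=\max(\max(i-2,j-2),\max(\mu_1+1,\mu_2+1))$ of \eqref{eq:2bump-intro} appears four times with shifted arguments (two shifts $i\mapsto i+k$, $j\mapsto j+k$ from $\Cor^\circ\circ\rmR_k$ and two shifts $\mu_\ell\mapsto\mu_\ell-k$ from $\rmR_k\circ\Cor^\circ$), and \eqref{eq:bump-fun2}, built from the cutoff functions $[\,\cdot\,]_{\le k}$ and $[\,\cdot\,]_{\ge 0}$ produced by the constant-term contributions on the right side; the proof is then a case analysis in the relative sizes of $\mu_1,\mu_2,i,j,k$. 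Until that identity is written down and checked, your argument is an outline whose validity rests on an unperformed verification. A further small point: for the $b=0$ boundary term the paper invokes $\ch_0(\pt)=-1$ (equation \eqref{bb55}) rather than $\tau_{-2}(\pt)=1$; since the cancellation of the extra $\tau$-linear degree-$3$ term is sensitive to this sign and to the implicit $(\imath u)$-normalization of the right side, you should pin down these boundary conventions explicitly before attempting the matching.
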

\begin{proof}
  We follow the same strategy as in the proof of Proposition \ref{prop:no-bump}.
  We first compute
  \[\rmR_k(\tch_i(\gamma')\tch_j(\gamma''))=
    \frac{(k+i-2)!}{(i-3)!}\tch_{i+k}(\gamma')\tch_j(\gamma'')
    +\frac{(k+j-2)!}{(j-3)!}\tch_i(\gamma')\tch_{j+k}(\gamma'')\, .\]
  After applying the correspondence,  we obtain
  \begin{multline}
   \label{eq:CR} \Cor^\circ(\rmR_k(\tch_i(\gamma')\tch_j(\gamma'')))=-\frac{1}{(i-3)!(j-2)!}\left[\frac{\fra_{i+j+k-4}(\gamma' \gamma'')}{\imath u}+
     \frac{\fra_{i+j+k-5}(\gamma'\gamma'' \cdot c_1)}{(\imath u)^{2}}+\right.\\\left.(\imath u)^{-2}\sum_{|\mu|=i+j+k-6}\frac{f(i+k,j;\mu_1,\mu_2)}{\Aut(\mu)}\fra_{\mu_1}\fra_{\mu_2}
     (\gamma'\gamma''\cdot c_1)\right]-
    \frac{1}{(i-2)!(j-3)!}\left[\frac{\fra_{i+j+k-4}(\gamma'\gamma'')}{\imath u}+\right.\\\left.
      \frac{\fra_{i+j+k-5}(\gamma'\gamma''\cdot c_1)}{(\imath u)^{2}}+(\imath u)^{-2}\sum_{|\mu|=i+j+k-6}\frac{f(i,j+k;\mu_1,\mu_2)}{\Aut(\mu)}\fra_{\mu_1}
      \fra_{\mu_2}(\gamma'\gamma''\cdot c_1)\right],
    \end{multline}
    where \(f(i,j;\mu_1,\mu_2)=\max(\max(i-2,j-2),\max(\mu_1+1,\mu_2+1))\).

    The second
    term of the difference is  easier:
    \begin{multline}
      \label{eq:RC}
      \rmR_k(\Cor^\circ(\tch_i(\gamma')\tch_j(\gamma'')))=
      -\frac{(\imath u)^{-i-j+4}}{(i-2)!(j-2)!}
      \rmR_k\bigg(
          (i+j-4)!\bigg(\tau_{i+j-5}(\gamma'\gamma'')+\\
          \left(\sum_{s=1}^{i+j-4}\frac1s\right)
          \tau_{i+j-6}(\gamma'\gamma''\cdot c_1)\bigg)+(\imath u)^{-2}\sum_{|\mu|=i+j-6}\frac{f(i,j;\mu_1,\mu_2)}{\Aut(\mu)}\fra_{\mu_1}\fra_{\mu_2}(\gamma'\gamma''\cdot c_1)\bigg)\, .
    \end{multline}

    We now analyze the difference. The \(\tau\)-linear terms of
    complex cohomological degree \(2\) in
   $(\imath u )^k$ times \eqref{eq:CR} and \eqref{eq:RC} are matching sums of the monomials:
    \[ (\imath u)^{-i-j+4}\frac{(i+j+k-4)!}{(i-2)!(j-2)!}(i+j-4)
      \tau_{i+j+k-5}(\gamma'\gamma'')\, .\]
    The \(\tau\)-linear terms of cohomological degree \(3\) almost match.
    To be precise, the corresponding terms in \eqref{eq:CR} are sums 
    the monomials:
    \[ (\imath u)^{-i-j+4}\frac{(i+j+k-4)!}{(i-2)!(j-2)!}(i+j-4)\left(\sum_{s=1}^{i+j+k-4}\frac1s\right)\tau_{i+j+k-6}(\gamma'\gamma''\cdot c_1)\, .\]
    Respectively, the corresponding terms in \eqref{eq:RC} are sums of the
    same monomials plus an extra term
\[ (\imath u)^{-i-j+4}\frac{(i+j+k-4)!}{(i-2)!(j-2)!}\tau_{i+j+k-6}(\gamma'\gamma''\cdot c_1)\, .\]

This extra term gets canceled by the term from the second line of \eqref{eq:double-bump-H} with \(b=0\) because of \eqref{bb55}.

    Therefore, the difference of
 $(\imath u )^k$ times
    \eqref{eq:CR} and \eqref{eq:RC} consists only of the \(\tau\)-quadratic terms of complex cohomological degree \(3\). We  omit cohomological classes
    since all the cohomological arguments are $\gamma'\gamma'' \cdot c_1$.
  The corresponding part of \eqref{eq:CR}  is 
    \begin{multline}
      \label{eq:RC-CR-quad}
      \frac{(\imath u)^{-i-j+2}}{(i-2)!(j-2)!}\sum_{|\mu|=i+j+k-6}\frac{\mu_1!\mu_2!}{\Aut(\mu)}
      \left[(i-2)f(i+k,j;\mu)+(j-2)f(i,j+k;\mu)\right]\tau_{\mu_1-1}\tau_{\mu_2-1}\, ,
    \end{multline}
    where we assume that \(f\) vanishes whenever one of the argument is negative.

    We must compare \eqref{eq:RC-CR-quad} with
    the expansion of the last four
    lines of \eqref{eq:double-bump-H}. The first two of the last four lines of \eqref{eq:double-bump-H} expand to
    \begin{multline*}
      -\frac{(\imath u)^{-i-j+2}}{(i-2)!(j-2)!}\sum_{a+b=k+2}(i+j+a-6)b\frac{(i+j+a-7)!(a-1)!}{2}\tau_{i+j+a-8}\tau_{b-2}\\+(i+j+b-6)a\frac{(i+j+b-7)!(a-1)!}{2}\tau_{i+j+b-8}\tau_{a-2}\, .
    \end{multline*}
    The last two lines of the last four lines of \eqref{eq:double-bump-H} expand to
    \begin{multline*}
      \frac{(\imath u)^{-i-j+2}}{(i-2)!(j-2)!}\sum_{a+b=k+2}
      (a-1)(b-1)\bigg((a+i-4)!(b+j-4)!\tau_{a+i-5}\tau_{b+j-5}+\\
        (a+j-4)!(b+i-4)!\tau_{a+j-5}\tau_{b+i-5}\bigg)\, .
      \end{multline*}

      These last two expressions are the \(\tau\)-cubic contribution to the \eqref{eq:double-bump-H} which result from the bumping of
      \(\tch_i(\gamma')\tch_j(\gamma'')\) with the constant term \(\mathrm{T}_k\).
      The corresponding coefficient in front of \(\tau\)-cubic monomial
      is given by the formula \eqref{eq:bump-fun2} below.
      
      To complete the proof, we must match  the coefficients in front of the
      terms in sums above.
     That is we need to compare       two expressions below
      for all \(\mu\) satisfying \(|\mu|=i+j+k-6\):
      \begin{multline}\label{eq:bump-fun1}
        (i-2)f(i+k,j;\mu)+(j-2)f(i,j+k;\mu)-(\mu_1+1)f(i,j;\mu_1-k,\mu_2)\\-
        (\mu_2+1)f(i,j;\mu_1,\mu_2-k)\, ,
      \end{multline}
      \begin{multline}
        \label{eq:bump-fun2}
        [\mu_1+1]_{\le k}(\mu_2+1)+(\mu_1+1)[\mu_2+1]_{\le k}-[\mu_1-i+3]_{\ge 0}[\mu_2-j+3]_{\ge 0}\\-[\mu_1-j+3]_{\ge 0}[\mu_2-i+3]_{\ge 0}\, ,
      \end{multline}
      where \([a]_{\le b}\) and \([a]_{\ge b}\) are cut off functions which equal \(a\) if \(a\) satisfies inequalities \(a\ge b\) and \(a\le b\) respectively (and are
      zero otherwise).
      The matching now is a long and routine check.
      We give some details.

      We can always assume  \(\mu_1\ge \mu_2\) and \(i\ge j\).
      Let us further assume \(k\) is small and 
      \(\mu_1\ge i+k\). If \(\mu_2\ge k \), then the function \eqref{eq:bump-fun1} equals
      \[(i+j-4)(\mu_1+1)-(\mu_1+1)(\mu_1-k+1)-(\mu_2-1)(\mu_1+1)=0\, .
      \]
      The assumed inequalities force all terms in \eqref{eq:bump-fun2} to vanish.

      Next, we assume all but last inequality are true, that is \(\mu_2<k\).
      Then the expression \eqref{eq:bump-fun1} becomes
      \[(i+j-4)(\mu_1+1)-(\mu_1+1)(\mu_1-k+1)=(\mu_1+1)(\mu_2+1)\, .\]
      On the other hand, in \eqref{eq:bump-fun2}, only the second expression does not
      vanish -- the second expression matches \eqref{eq:bump-fun1}.
      Rest of the case can be treated analogously.
    \end{proof}


    \subsection{Interactions among three insertions}
    The last interaction to consider is among three descendent
    insertions. Because of the stationary assumption, there
    is only one case to control.
    
\begin{proposition}\label{prop:triple-bump}
  Let \(\gamma',\gamma'',\gamma''' \in H^2(X)\),  and let \(i_1,i_2,i_3\ge 3\),
 Then, for \(k\ge -1\), we have
  \begin{equation*}
    (\imath u )^k\, \Cor^\circ\left(\rmR_k(
    \tch_{i_1}(\gamma') \tch_{i_2}(\gamma'') \tch_{i_3}(\gamma''')
    )\right)-
  \rmR_k\left(
    \Cor^\circ\big( \tch_{i_1}(\gamma') \tch_{i_2}(\gamma'') \tch_{i_3}(\gamma''')  \big)
  \right)=0\, .
     \end{equation*}
\end{proposition}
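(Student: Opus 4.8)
The plan is to collapse both sides to a single point-class descendent and then match prefactors. First I would record the action of the stable-pairs operator $\rmR_k$ on a single insertion $\tch_i(\gamma)$ with $\gamma\in H^2(X)$. Since $\gamma\cdot c_2\in H^6(X)$, both $\ch_i(\gamma)$ and $\ch_{i-2}(\gamma\cdot c_2)$ acquire the \emph{same} multiplier $\prod_{n=0}^k(i-2+n)=(i+k-2)!/(i-3)!$ under $\rmR_k$ — the index shift in the degree-$3$ factor exactly compensates the drop of two in the descendent index — so the $c_2$-correction is carried along compatibly and
\[
\rmR_k(\tch_i(\gamma)) = \frac{(i+k-2)!}{(i-3)!}\,\tch_{i+k}(\gamma), \qquad \gamma\in H^2(X).
\]
Because $\rmR_k$ is a derivation, $\rmR_k(\tch_{i_1}(\gamma')\tch_{i_2}(\gamma'')\tch_{i_3}(\gamma'''))$ is the sum of three terms, each obtained by shifting a single index by $k$.

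Next I would apply the triple-bump correspondence \eqref{eq:triple-bump-intro} to each of these three terms. All three are products of three degree-$2$ insertions, so each evaluates to a single $\fra$-descendent of the class $\gamma'\gamma''\gamma'''\in H^6(X)$. Writing $N=i_1+i_2+i_3$, the $s$-th term carries the factor $\frac{(i_s+k-2)!}{(i_s-3)!}\cdot\frac{1}{(i_s+k-2)!}=\frac{1}{(i_s-3)!}$ against $\prod_{t\ne s}(i_t-2)!$, while the common bump factor is $N+k-6$. Summing over $s$ and using $\sum_s(i_s-2)=N-6$, the left side collapses to
\[
(\imath u)^k\,\Cor^\circ(\rmR_k(\cdots)) = (\imath u)^{k-2}\,\frac{(N-6)(N+k-6)}{(i_1-2)!(i_2-2)!(i_3-2)!}\,\fra_{N+k-7}(\gamma'\gamma''\gamma''').
\]
For the right side I would first apply \eqref{eq:triple-bump-intro} to the undifferentiated product, getting $\frac{(\imath u)^{-2}(N-6)}{(i_1-2)!(i_2-2)!(i_3-2)!}\,\fra_{N-7}(\gamma'\gamma''\gamma''')$, and then apply the Gromov-Witten operator $\rmR_k$. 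The key simplification is that $\gamma'\gamma''\gamma'''$ is a point class, so $\pt\cdot c_1^j=0$ for $j\ge 1$ and only $\rmR_k^0$ acts. Converting $\fra_m(\pt)=m!\,(\imath u)^{1-m}\tau_{m-1}(\pt)$, using $\rmR_k(\tau_i(\pt))=\frac{(i+k+2)!}{(i+1)!}\tau_{i+k}(\pt)$, and converting back gives $\rmR_k(\fra_{N-7}(\pt))=(\imath u)^k(N+k-6)\,\fra_{N+k-7}(\pt)$, whence the right side equals $(\imath u)^{k-2}\frac{(N-6)(N+k-6)}{(i_1-2)!(i_2-2)!(i_3-2)!}\fra_{N+k-7}(\gamma'\gamma''\gamma''')$ as well. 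The two sides agree, which is the claim.

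This is the most transparent of the bump computations, so I do not expect a genuine obstacle; the only points requiring care are the compatibility of $\rmR_k$ with the $c_2$-correction in the definition of $\tch$ and the repeated $\fra\leftrightarrow\tau$ translation for point descendents. I would also remark why the statement involves only $\rmR_k$ and carries no constant-term $\CT_k$ contribution on the right: any interaction of the three $H^2$ insertions with the additional $c_1\in H^2(X)$ produced by $\CT_k$ would require a four-fold bump landing in $H^8(X)=0$, so the constant term cannot contribute a genuine triple interaction. Finally, since $i_1,i_2,i_3\ge 3$ we have $N-7\ge 2$ and $N+k-7\ge 1$, so all descendent indices remain in the valid range and no degenerate $\fra_0$ or $\fra_{-1}$ terms arise in the computation.
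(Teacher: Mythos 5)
Your proof is correct and matches the paper's own argument: both expand $\rmR_k$ as a derivation into three index-shifted terms, apply the triple-bump formula \eqref{eq:triple-bump-intro} to each, collapse the sum using $\sum_s (i_s-2)=|i|-6$, and then compare with the Gromov--Witten $\rmR_k$ applied after the correspondence via the $\fra \leftrightarrow \tau$ conversion for point-class descendents. The only cosmetic difference is that the paper records the common value as a multiple of $\tau_{|i|+k-8}(\gamma'\gamma''\gamma''')$ while you keep it as $\fra_{|i|+k-7}(\gamma'\gamma''\gamma''')$; your auxiliary checks (compatibility of $\rmR_k$ with the $c_2$-correction in $\tch$, vanishing of any constant-term interaction, and the index ranges excluding $\fra_0$, $\fra_{-1}$) are all accurate.
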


For the proof,  we will use the explicit correspondence formula
\eqref{eq:triple-bump-intro}
for the triple interaction:
\begin{equation}
  \label{eq:triple-bump}
  \Cor^\circ(\tch_{i_1}\tch_{i_2}\tch_{i_3})(\gamma)=\frac{(|i|-6)(\imath u)^{-2}}{(i_1-2)!(i_2-2)!(i_3-2)!}\fra_{|i|-7}(\gamma)\end{equation}
where \(|i|=i_1+i_2+i_3\).

\begin{proof}[Proof of Proposition~\ref{prop:triple-bump}]
  We first compute the left side of the equation. To start,
  \begin{eqnarray*}
    \rmR_k(\tch_{i_1}(\gamma')\tch_{i_2}(\gamma'')\tch_{i_3}(\gamma'''))
    &= &\ \, \frac{(i_1+k-2)!}{(i_1-3)!}\tch_{i_1+k}(\gamma')\tch_{i_2}(\gamma'')
         \tch_{i_3}(\gamma''')\\
    & & +\frac{(i_2+k-2)!}{(i_2-3)!}\tch_{i_1}(\gamma')\tch_{i_2+k}(\gamma'')
        \tch_{i_3}(\gamma''')\\
    & & +\frac{(i_3+k-2)!}{(i_3-3)!}\tch_{i_1}(\gamma')\tch_{i_2}(\gamma'')
    \tch_{i_3+k}(\gamma''')\, .
  \end{eqnarray*}
  After applying the triple bumping  and the transition from \(\fra\) descendents to \(\tau\) descendents, we obtain:
  \begin{multline}\label{fvv39}
    \Cor^\circ(\rmR_k(\tch_{i_1}(\gamma')\tch_{i_2}(\gamma'')
    \tch_{i_3}(\gamma''')))=(|i|+k-6)(\imath u)^{-2}\left(\frac{1}{(i_1-3)!(i_2-2)!(i_3-2)!}\right.\\
    \left.+\frac{1}{(i_1-2)!(i_2-3)!(i_3-2)!}+\frac{1}{(i_1-2)!(i_2-2)!(i_3-3)!}\right)\fra_{|i|+k-7}(\gamma'\gamma''\gamma''')\\
=    (\imath u)^{-|i|-k+6}(|i|-6)\frac{(|i|+k-6)!}{(i_1-2)!(i_2-2)!(i_3-2)!} \tau_{|i|+k-8}(\gamma'\gamma''\gamma''')\, .
  \end{multline}
  On the other hand, the right side of the equation equals
  \begin{multline*}
    \rmR_k(\Cor^\circ(\tch_{i_1}(\gamma')\tch_{i_2}(\gamma'')
    \tch_{i_3}(\gamma''')))=
    \frac{(\imath u)^{-2}(|i|-6)}{(i_1-2)!(i_2-2)!(i_3-2)!}\rmR_k(\fra_{|i|-7}(\gamma'\gamma''\gamma'''))\\
    = (\imath u)^{-|i|+6}   (|i|-6)\frac{(|i|+k-6)!}{(i_1-2)!(i_2-2)!(i_3-2)!} \tau_{|i|+k-8}(\gamma'\gamma''\gamma''')\, ,   
  \end{multline*}
  which matches $(\imath u)^k$ times \eqref{fvv39}.
\end{proof}

\subsection{Proof of Theorem~\ref{thm:intertw}}
  Let \(k\ge 1\), and let
  \(D\in \mathbb{D}^{X\bigstar}_{\PT}\).
  To prove the equality 
  \[\Cor^\bullet\circ \rmL_k^\PT(D)=(\imath u)^{-k}\, \widetilde{\rmL}_k^\GW\circ \Cor^\bullet(D)\, , \]
after the restrictions
  $\tau_{-2}(\pt)=1$ and $\tau_{-1}(\gamma)=0$ for \(\gamma\in H^{>2}(X)\), 
  we will expand both sides. The non-interacting case
  was already proven in Section \ref{mono9}.
  Equality in the general case will use
  Propositions~\ref{prop:CTT},~\ref{prop:shiftpt},~\ref{prop:no-bump},~\ref{prop:double-bump1},
  \ref{prop:double-bump2}, and \ref{prop:triple-bump}.

  In the formulas below, we will use short-hand
  notation for the constant term of \(\rmL_k^{\PT}\):
  \[\CT_k=\sum_j \CT_{k,j}^L\CT_{k,j}^R\, ,\]
  where  $L$ and $R$ denote the left and right sides in
  \eqref{ll99v}.

  For $D=\prod_{i=1}^\ell D_i \in \mathbb{D}^{X\bigstar}_{\PT}$, we have
  \begin{eqnarray}\label{eq:CL}
    \Cor^\bullet(\rmL_k^{\PT}(D))&=&\Cor^\bullet(\CT_k D+\rmR_k(D))\\
    \nonumber
&=&    \sum_{P'}\sum_j\prod_{S\in P'} \Cor^\circ(\CT_{k,j}^SD^S)+\sum_{P''}\sum_{t=1}^{\ell(P'')}\Cor^\circ(\rmR_k(D^{S_t}))\prod_{S\in P'',\,  S\ne S_t}\Cor^\circ(D^S)\, .
  \end{eqnarray}
 The first sum is over partitions \(P'\) of \(\{1,\dots,\ell,L,R\}\)
  and \[D^S=\prod_{i\in S\cap \{1,\dots,\ell\}}D_i\, ,\quad \CT_{k,j}^S=\prod_{\gamma\in S\cap \{L,R\}}
    \CT_{k,j}^\gamma\, .\]
The second sum is over partitions $P''$ of \(\{1,\dots,\ell\}\).
  
  We must  compare the \eqref{eq:CL}  with  $(\imath u)^{-k}$  times
  \begin{eqnarray}
    \label{eq:LC}
    \ \ \   \rmL_k^{\GW}(\Cor^\bullet(D)) & =&
   \rmL_k^{\GW}(\sum_P\prod_{S\in P}\Cor^\circ(D^S))\\ \nonumber &=&
    \sum_{P'}\CT_k\prod_{S\in P'}\Cor^\circ(D^S)+\sum_{P''}\sum_{t=1}^{\ell(P'')}
    \rmR_k(\Cor^\circ(D^{S_t}))\prod_{S\in P'', \, S\ne S_t}\Cor^\circ(D^S)\, .
  \end{eqnarray}
  where both sums run over partitions \(P',P''\) of \(\{1,\dots, \ell\}\).

  Since we only work with the stationary descendents, we can assume that the
  parts of partitions in the formulas have at most three elements.
  We will match the terms of \eqref{eq:CL} and
$(\imath u)^{-k}$  times
  \eqref{eq:LC}
  depending on the size of \(S_t\).

  \vspace{12pt}
  \noindent
 $\bullet$ If \(|S_t|=3\), then the terms in \eqref{eq:CL} and \eqref{eq:LC}
  with \(P''=\tilde{P}\sqcup S_t\) are matched by Proposition~\ref{prop:triple-bump}.

   \vspace{8pt}
  \noindent
 $\bullet$
  If \(|S_t|=2\) with \(S_t=\{p,q\}\), then we  use Propositions
  \ref{prop:double-bump1} and \ref{prop:double-bump2} to match the terms
  of \eqref{eq:CL} with \(P''=\tilde{P}\sqcup S_t\) and
 with  \(P'\) equal to \[\tilde{P}\sqcup \{S_t,L\}\sqcup \{R\}\, ,\quad \tilde{P}\sqcup
   \{S_t,R\}\sqcup \{L\}\, ,\quad
   \tilde{P}\sqcup\{p,R\}\sqcup\{q,L\}\, ,\quad
    \tilde{P}\sqcup \{p,L\}\sqcup \{q,R\}\, ,\]
  with the terms of \eqref{eq:LC} with \(P''=\tilde{P}\sqcup S_t\) .

   \vspace{8pt}
  \noindent
 $\bullet$
 If \(|S_t|=1\) with \(S_t=\{p\}\), then we  use
 Proposition~\ref{prop:shiftpt} and Proposition~\ref{prop:no-bump} to
 identify the terms of \eqref{eq:CL} with
  \(P''=\tilde{P}\sqcup S_t\) and with \(P'\) equal to
  \[\tilde{P}\sqcup\{p,L\}\sqcup\{R\}\, ,\quad \tilde{P}\sqcup\{p,R\}
   \sqcup \{L\}\]
  with the terms of \eqref{eq:LC} with
  \(P''=\tilde{P}\sqcup S_t\).

  \vspace{8pt}
  \noindent $\bullet$
  The terms of \eqref{eq:CL} with \(P'=\{L\}\sqcup \{R\}
  \sqcup \tilde{P}\) are equal to the terms of \eqref{eq:LC} with
  \(P'=\tilde{P}\) by Proposition~\ref{prop:CTT}.

  \vspace{12pt}
\noindent The above four cases match all the terms in \eqref{eq:CL}
and \eqref{eq:CL}. \qed

\section{Virasoro constraints for Hilbert schemes of points of 
surfaces}
\label{sec:viras-constr-surf}

Let $S$ be a nonsingular projective toric surface, and let
$$ X= S \times \PP^1\, .$$
As an immediate consequence of Theorem \ref{thm:Vir-toric} applied to the toric
variety $X$, we obtain the following Virasoro constraints: 
\begin{equation}\label{jj234}
\forall k\geq -1\, ,\ \ \ \ \   \Big\langle \mathcal{L}^\PT_k\prod_{i=1}^r \ch_{m_i}(\gamma_i\times \pt)\Big \rangle^{X,\PT}_{n[\PP^1]}
  =0\, ,
  \end{equation}
  where \(\gamma_i\in H^*(X)\), $\pt \in H^2(\PP^1)$ is the point class,
  and 
  \([\PP^1]\in H_2(X)\) is the fiber class.

  We can specialize the constraints \eqref{jj234}
  further to the case
  of the minimal possible Euler characteristic,
  $$P_n(S\times\PP^1,n[\PP^1]) \cong \text{Hilb}^n(S)\, .$$
  
  The above isomorphism of schemes is defined as follows.
  A point $\xi\in \text{Hilb}^n(S)$ corresponds to a 0-dimensional subscheme of $S$ of length $n$. Then, $$\xi\times \PP^1\subseteq S\times \PP^1$$
  is a curve embedded in $S\times \PP^1$ with Euler characteristic $n$ and curve class $n[\PP^1]$. The isomorphism sends $\xi$ to the corresponding stable pair
  $$\calO_{S\times \PP^1}\to\calO_{\xi\times \PP^1}\, .$$
  
  Since the moduli space of stable pairs is nonsingular
  of expected dimension
  $$\int_{n[\PP^1]}c_1(S\times \PP^1)=2n\, ,$$
   the virtual class is the standard
  fundamental class here. The result is a new set
  of Virasoro constraints for  tautological classes
  on $\text{Hilb}^n(S)$.

  To write the Virasoro constraints for $\text{Hilb}^n(S)$
  explicitly, we first define the corresponding descendent
  insertions.
  Let
  $$ 0\rightarrow \mathcal{I} \rightarrow \mathcal{O}_{{\Hilb^n(S)}\times S}
  \rightarrow \mathcal{O}_Z \rightarrow 0$$
  be the universal sequence associated to the universal
  subscheme
  $$ Z \subset S \times \text{Hilb}^n(S) \, .$$
  For $\gamma\in H^*(S)$, let
$$
  \ch_k(\gamma) = -\pi_* \big( \ch_k(\mathcal{I})\cdot \gamma\big)\, ,
  $$
  where $\pi$ is the projection to $\text{Hilb}^n(S)$. We follow as
  closely as possible the descendent notation for 3-folds in  Section \ref{spint}.

  Let \(\mathbb{D}(S)\)  be the commutative algebra
with generators
$$\big \{ \, \ch_i(\gamma)\, | \, i\geq 0\, , \
\gamma\in H^*(S)\, \big \}\, $$
following Section \ref{sec:pt-vir-constraints}.
We define derivations $\rmR_k$ by their actions on the generators:
\[\rmR_k(\ch_i(\gamma))=\left(\prod_{n=0}^k(i+d-2+n)\right)\ch_{i+k}(\gamma)\, ,\quad \gamma\in H^{2d}(S)\, .\]
For $k\geq -1$, we define differential operators
\begin{eqnarray*}
  \rmL^S_k &=&-\sum_{a+b=k+2}(-1)^{(d^L+1)(d^R+1))}(a+d^L-2)!(b+d^R-2)!\ch_a\ch_b(1)\\
  && +\frac{1}{12}
     \sum_{a+b=k}a!b!\ch_a\ch_b(c_1^2+c_2)+\rmR_k\, .
     \end{eqnarray*}
where the sum
is over ordered pairs $(a,b)$
with $a,b\geq 0$.
     
     \begin{theorem} 
       For all  \(k\geq -1\) and  \(D\in \mathbb{D}(S)\),
   we have 
   \[\int_{\Hilb^n(S)}\left(\rmL^S_k+(k+1)!\rmR_{-1} \ch_{k+1}(\pt) \right)(D)=0\,
   \]
   for all $n\geq 0$.
  \end{theorem}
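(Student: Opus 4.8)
The strategy is to deduce the theorem from the stationary stable pairs Virasoro constraints of Theorem~\ref{thm:Vir-toric}, applied to the nonsingular projective toric $3$-fold $X = S\times\PP^1$ in the curve class $\beta = n[\PP^1]$. The bridge is the isomorphism $P_n(X,n[\PP^1])\cong\Hilb^n(S)$ recorded above, under which $[P_n(X,n[\PP^1])]^{\vir} = [\Hilb^n(S)]$, so that $\langle\,\cdot\,\rangle^{X,\PT}_{n[\PP^1]}$ is honest integration over $\Hilb^n(S)$. The constraints \eqref{jj234} are therefore already available; what remains is to identify the $3$-fold operator $\calL^\PT_k$ with the surface operator $\rmL^S_k + (k+1)!\,\rmR_{-1}\ch_{k+1}(\pt)$ under a descendent dictionary.

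I would first establish that dictionary. Let $\omega\in H^2(\PP^1)$ be the point class and let $q\colon S\times\PP^1\times\Hilb^n(S)\to S\times\Hilb^n(S)$ be the projection. Under the moduli isomorphism the universal sheaf satisfies $\ch(\mathbb{F}_n-\OO) = -q^*\ch(\mathcal{I})$, pulled back from $S\times\Hilb^n(S)$, where $\mathcal{I}$ is the universal ideal sheaf. The projection formula together with $q_*\omega = 1$ and $q_*1 = 0$ then gives, for all $i\ge 0$, $\gamma,\alpha\in H^*(S)$,
\[
\ch_i^X(\gamma\times\omega) = \ch_i^S(\gamma)\,,\qquad \ch_i^X(\alpha\times 1) = 0
\]
as classes on $\Hilb^n(S)$. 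Since every class in $H^*(X)$ splits as $\alpha\times 1 + \beta\times\omega$, the second identity lets me replace, inside $\langle\,\cdot\,\rangle^{X,\PT}_{n[\PP^1]}$, any cohomology insertion by its $(\,\cdot\,\times\omega)$-component.

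Next I would translate $\calL^\PT_k$ using the product geometry. Writing $h = c_1(T_S)$, one has $c_1(T_X) = h\times 1 + 2(1\times\omega)$, $\Delta_X = \Delta_S\boxtimes\Delta_{\PP^1}$, and $\Delta_{\PP^1} = \omega\otimes 1 + 1\otimes\omega$. Extracting the $\omega\otimes\omega$-component of $c_1(T_X)\cdot\Delta_X$ (the only part surviving the dictionary) yields $2\sum_j(e_j\times\omega)\otimes(e^j\times\omega)$ with $\sum_j e_j\otimes e^j = \Delta_S$, the factor $2$ coming from $c_1(T_{\PP^1}) = 2\omega$. Because the $X$-degree of $e_j\times\omega$ exceeds the $S$-degree of $e_j$ by one, the factorials $(a+d^L-3)!$ and the sign $(-1)^{d^Ld^R}$ on $X$ become $(a+d^L-2)!$ and $(-1)^{(d^L+1)(d^R+1)}$ on $S$; the prefactor $-\tfrac12$ absorbs the factor $2$ and reproduces exactly the quadratic term $-\sum(-1)^{(d^L+1)(d^R+1)}(a+d^L-2)!(b+d^R-2)!\,\ch_a\ch_b(1)$ of $\rmL^S_k$. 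The same degree shift turns $\rmR_k$ on $X$ into $\rmR_k$ on $S$, and since $\pt\times\omega\in H^6(X)$ is the point class of $X$, the term $(k+1)!\,\rmR_{-1}\ch_{k+1}(\pt\times\omega)$ becomes $(k+1)!\,\rmR_{-1}\ch_{k+1}(\pt)$. For the last term I would use that $c_1 c_2$ is a top class: a direct computation gives $c_1(T_X)c_2(T_X) = 2\bigl(c_1(S)^2+c_2(S)\bigr)\times\omega$, and from $\eta\cdot\Delta = \eta\otimes\pt$ for top $\eta$ one finds $\tfrac{1}{24}\ch_a\ch_b(c_1c_2) = \chi(\OO_S)\,\ch_a(\pt\times\omega)\ch_b(\pt\times\omega)$, while Noether's formula gives $\tfrac{1}{12}\ch_a\ch_b(c_1^2+c_2) = \chi(\OO_S)\,\ch_a(\pt)\ch_b(\pt)$ on $S$; the two agree under the dictionary.

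Assembling these matchings shows that for a monomial $D = \prod_i\ch_{m_i}(\gamma_i\times\omega)$ one has $\langle\calL^\PT_k(D)\rangle^{X,\PT}_{n[\PP^1]} = \int_{\Hilb^n(S)}\bigl(\rmL^S_k + (k+1)!\,\rmR_{-1}\ch_{k+1}(\pt)\bigr)(\bar D)$ for the corresponding surface monomial $\bar D = \prod_i\ch_{m_i}(\gamma_i)$, so the desired vanishing is exactly \eqref{jj234}; since every surface monomial arises this way, the theorem follows. I expect the main obstacle to be the bookkeeping of the penultimate step: one must track carefully the K\"unneth components of $c_1\Delta_X$ and $c_1c_2\Delta_X$, the uniform degree shift $d_X = d_S+1$, the signs, and the numerical constants (the factor $2$ from $T_{\PP^1}$, and the Noether/HRR values) so that the $X$-operator collapses precisely onto the surface operator. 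The conceptual inputs—the dictionary of the second paragraph and the product Chern classes—are routine.
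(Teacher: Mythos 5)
Your proposal is correct and follows essentially the same route as the paper's own proof: both specialize the toric stable pairs Virasoro constraints to $X=S\times\PP^1$ in class $n[\PP^1]$, establish the dictionary $\ch_i^{X}(\gamma\times\omega)=\ch_i^{\Hilb}(\gamma)$, $\ch_i^{X}(\gamma\times 1)=0$ via the pulled-back universal ideal sheaf and the pushforward along $S\times\PP^1\to S$, and then match $\calL_k^\PT$ with $\rmL_k^S+(k+1)!\,\rmR_{-1}\ch_{k+1}(\pt)$ using $c_1(T_X)=c_1(S)\times 1+2(1\times\omega)$, the K\"unneth decomposition of $\Delta_X\cdot c_1$, and the identification of $\tfrac{1}{24}c_1c_2(X)$ with $\tfrac{1}{12}(c_1(S)^2+c_2(S))\times\omega$ (you verify this last point by direct Chern class computation plus Noether's formula, where the paper invokes multiplicativity of the Todd class, but these are the same routine check).
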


  \begin{proof} For clarity,  we will use superscripts $\ch_i^{\mathrm{Hilb}}$ and $\ch_i^{\mathrm{PT}}$ here to indicate whether we are
    referring to descendents on the Hilbert scheme of $S$ as defined above
    or to stable pairs descendents on $S\times \PP^1$ as defined
    in Section \ref{spint}.

 The universal stable pair of $P_n(S\times \PP^1, n[\PP^1])$ is $\mathbb F= \calO_{Z\times \PP^1}$. Hence,
 $$\ch_i(\mathbb{F}-\calO_{S\times \PP^1\times \Hilb^n(S)})=(\rho\times
 \textup{id})^\ast \ch_i(-\mathcal{I})\, ,$$
 where  $\rho$ is the projection $\rho: S\times \PP^1\to S$.
 By the push-pull formula, for $\delta\in H^\ast(S\times \PP^1)$, we have
 \begin{align*}\ch_i^{\PT}(\delta)&=\pi_\ast \left((\rho \times
                                    \textup{id} )^\ast\left(\ch_i(-\mathcal{I})\cdot \delta \right)\right)\\
                                  &=\pi_\ast\left(\ch_i(-\mathcal I)\cdot \rho_\ast \delta \right)\\ &=
                                    \ch_i^{\Hilb}(\rho_\ast \delta)\, . 
 \end{align*}
 So, $\ch_i^{\PT}(\gamma\times 1)=0$, and $\ch_i^{\PT}(\gamma\times \pt)=\ch_i^{\Hilb}(\gamma)$.

 Since we have the Virasoro constraints \eqref{jj234}, we must only
 check that the composition
\begin{equation}\label{xx99xx}
  \mathbb D(S)\hookrightarrow \mathbb D^{X+}_{\PT}\overset{\mathcal L_k^\PT}{\rightarrow} \mathbb D^{X+}_{\PT}\rightarrow \mathbb D(S)
  \end{equation}
 is precisely $$\rmL^S_k+(k+1)!\rmR_{-1} \ch_{k+1}(\pt)\, .$$
 The first inclusion in \eqref{xx99xx} is determined by sending generators $\ch_{i}^{\Hilb}(\gamma)$ to $\ch_i^{\PT}(\gamma\times \pt)$, and the last map
of \eqref{xx99xx}
 sends $\ch_i^{\PT}(\delta)$ to $\ch_i^{\Hilb}(\rho_\ast \delta)$.
 
 The analysis of the composition
 is straightforward. For the diagonal terms, we note that
 $$c_1(X)=2(1\times \pt)+c_1(S)\times 1$$
 and 
 $$\frac{c_1c_2}{24}(X)=\mathsf{td}_3(X)=\mathsf{td}_2(S)\times \mathsf{td}_2(\PP^1)=\frac{1}{12}(c_1(S)^2+c_2(S))\times \pt.$$
 We write the K\"unneth decomposition of the diagonal as
 $$\Delta\cdot 1=\sum_{i}\theta_i^L\otimes \theta_i^R\in H^\ast(S\times S)\, .$$ Then, the K\"unneth decomposition of $\Delta\cdot c_1\in H^\ast(X\times X)$ is
 $$2\sum_{i}(\theta_i^L\times \pt)\otimes (\theta_i^R\times \pt)+\cdots\, ,$$
 where the remaining terms in the dots are killed by $\rho_\ast$.
 The matching of operators then follows from the definition of
 $\mathcal L_k^\PT$.
 \qedhere
\end{proof}

   \section{$\GW/\PT$ descendent correspondence: review}
\label{gwptrev}

\subsection{Vertex operators}
\label{sec:negative-descendents}
Our goal here is to
 review the results of \cite{OblomkovOkounkovPandharipande18} and to explain
 how Theorem \ref{thm:cor-main} can be derived
 from \cite{OblomkovOkounkovPandharipande18}. The full
 derivation is postponed to Section~\ref{sec:residue-computation}.

 To state the main result of \cite{OblomkovOkounkovPandharipande18},
 we require
 negative descendents $\left\{ \fra_k\right\}$ for \(k\in\zzz_{<0}\)
 which are defined to
 satisfy the Heisenberg relations with positive descendents:
\begin{equation}\label{eq:Heis}
[\fra_k(\alpha),\fra_m(\gamma)]=k\delta_{k+m}\int_X\alpha\cup\gamma\, .
\end{equation}
The descendents \(\left\{ \fra_k\right\} \) for ${k\in\zzz\setminus \{0\}}$
generate the \(H^*(X)\)-algebra \(\Heis_X\).

For curve class \(\beta\in H_2(X)\),
there is a geometrically defined Gromov-Witten
evaluation \(\langle\cdot\rangle_\beta\) map on the algebra generated by the
non-negative descendents.
We can extend the evaluation map to the whole algebra \(\Heis_X\) by
defining
$$\big\langle\mathfrak{a}_{k}(\gamma)\Phi\big\rangle^{X,\GW}_\beta=\left[\int_X \big(-c_1\delta_{k+1}+\delta_{k+2}iu\big)\cdot \gamma\right]\,  
\big\langle \Phi\big\rangle^{X,\GW}_\beta\, ,\quad k<0\, .$$
We assemble the operators $\mathfrak{a}_k$ in the following generating function:
\begin{equation}\label{eq:phiz}
\phi(z)=\sum_{n>0}\frac{\mathfrak{a}_n}{n} \left(\frac{\imath zc_1}{u}\right)^{-n}+\frac{1}{c_1}\sum_{n<0}\frac{\mathfrak{a}_n}{n} \left(\frac{\imath zc_1}{u}\right)^{-n}\, .
\end{equation}



The main objects of study in  \cite{OblomkovOkounkovPandharipande18} are the  vertex  operators
\begin{equation}\label{eq:H^GW}
\mathrm{H}^{\mathsf{GW}}( x)=\sum_{k=0}^{\infty} \mathrm{H}^{\mathsf {GW}}_k x^{k+1}=\mathrm{Res}_{w=\infty}\left(\frac{\sqrt{dydw}}{y-w}
:e^{\theta\phi(y)-\theta\phi(w)}:\right)\, ,
\end{equation}
where $y$, $w$, and $x$ satisfy the constraints
\begin{equation}\label{eq:master_curve}
ye^y=we^w e^{-x/\theta}\, , \  \quad \theta^{-2}=-c_2(T_X)\, .
\end{equation}
Here, \(\mathrm{Res}_{w=\infty}\) denotes $\frac{1}{2\pi \imath}$
times the integral along a small loop around 
$
w=\infty. 
$

Normally ordered monomials
$$ \fra_{i_1}\fra_{i_2}\dots\fra_{i_k},\quad i_1\le i_2\le\dots\le i_k,$$
form a linear basis of $\mathsf{Heis}$. Respectively, we use \(:\cdot:\)
for the normal ordering operation
$$:\prod_j\fra_{i_j}:\,\,\,= \fra_{i_1}\fra_{i_2}\dots\fra_{i_k},\quad i_1\le i_2\le\dots\le i_k,$$
Extended \(H^*(X)\)-linearly to the whole algebra \(\mathrm{Heis}_X\).

Let us notice that the equation \eqref{eq:master_curve} as well as the vertex
operator \eqref{eq:H^GW} have symmetry
\[y\mapsto w,\quad w\mapsto y,\quad \theta\mapsto -\theta,\quad x\mapsto x.\]
This symmetry implies that the only even powers of \(\theta\)
appear in the expansion of \eqref{eq:H^GW} (see Lemma 15 from \cite{OblomkovOkounkovPandharipande18} for more discussions
and further properties of the vertex operator).

The  operators \(\Hr^{\mathsf{GW}}_k\) are mutually commutative.
To obtain explicit formulas for \(\Hr^{\mathsf{GW}}_k\), we use the Lambert function to solve equation
(\ref{eq:master_curve}) and express \(y\) in terms of \(x,w\). 
The integral in the definition
of \(\Hr^{\mathsf{GW}}_k\) can be interpreted as an 
extraction of the coefficient of \(w^{-1}\).
The descendent classes
$$\Hr^{\mathsf{GW}}_k(\gamma) \in \mathsf{Heis}_X$$
are then obtained using the Sweedler coproduct.
We also use the Sweedler coproduct conventions in
\begin{equation}
  \label{ccswwd}
  \Hr^{\mathsf{GW}}_{\vec{k}}(\gamma)=\prod_{i=1}^m\Hr^{\mathsf{GW}}_{k_i}(\gamma)\, ,
  \ \ \quad \vec{k}=(k_1,\dots,k_m)\, .
  \end{equation}

In the Sweedler conventions \cite{Kas},
we abbreviate notation for the intersection
with the  small diagonal \(\Delta_n\subset X^n\) with the pull-back of a class
\(\gamma\in H^*(X)\):
\[ H^*(X^n)\ni [\Delta_n]\cdot \gamma=\sum_k \gamma_1^k\otimes \dots\gamma_n^k=
  \gamma_{(1)}\otimes\dots\otimes\gamma_{(n)}\, .\]
Thus, the formula \eqref{ccswwd} expands as
\[\prod_{i=1}^m \Hr^{\mathsf{GW}}_{k_i}(\gamma)=\prod_{i=1}^m \Hr^{\mathsf{GW}}_{k_i}(\gamma_{(i)})\, .\]

\subsection{Stable pairs}
The stable pairs analogues of
the operators \(\Hr^{\mathsf{GW}}_{\vec{k}}(\gamma)\) are products
of \(\Hr^{\PT}_k(\gamma)\) defined as follows.

The classes $\mathrm{H}^{\PT}_k(\gamma)$ 
 are linear combinations of  descendents 
on the moduli spaces of stable pairs. 
Let
$$\Hr^{\PT}_k(\gamma)= \pi_*\left( \Hr^{\PT}_k\cdot \gamma\right)\,
\in \bigoplus_{n\in \mathbb{Z}} H^*(P_n(X,\beta))\, ,$$
where the classes $\Hr^{\PT}_k\in \bigoplus_{n\in \mathbb{Z}}
H^*(X\times P_n(X,\beta))$
are defined by
\begin{eqnarray*}
\mathrm{H}^{\PT}(x)&=&\sum_{k=0}^\infty x^{k+1} \Hr_k^\PT \\
&=&\mathcal{S}^{-1}\left(\frac{x}{\theta}\right)
\sum_{k=0}^\infty x^k \text{ch}_k(\mathbb{F}-\mathcal{O})\, ,
\end{eqnarray*}
where 
$$ \theta^{-2}=-c_2(T_X)\, ,\ \ \ \ 
\mathcal{S}(x)=\frac{e^{x/2}-e^{-x/2}}{x}\, .$$
In particular, we have
\[\mathrm{H}^{\PT}_k=\text{ch}_{k+1}(\mathbb{F})+\frac{c_2}{24}\text{ch}_{k-1}(\mathbb{F})+\frac{7c_2^2}{5760}\text{ch}_{k-3}(\mathbb{F})
  + \ldots\, .\]

\subsection{Equivariant correspondence} \label{ccppvv}
All the definitions and
construction introduced in Section \ref{sec:negative-descendents}
have canonical lifts to the equivariant setting with respect to a group
action on the variety $X$. 
We review here   the
equivariant  $\GW/\PT$ descendent correspondence \cite{PPDC}. 

The most natural setting is the capped vertex
formalism of \cite{MOOP, PPDC}
which we review briefly here.
Let the 
3-dimensional torus 
$$\mathsf{T}=\cc^* \times \cc^* \times \cc^*$$ 
act on $\mathbf{P}^1\times\mathbf{P}^1\times\mathbf{P}^1$ diagonally.
The tangent weights of the $\mathsf{T}$-action at the point 
$$\mathsf{p}=0\times 0\times 0 \in \mathbf{P}^1\times\mathbf{P}^1\times\mathbf{P}^1$$ are $s_1,s_2,s_3$. The $\mathsf{T}$-equivariant cohomology ring of a point
is 
$$H_{\mathsf{T}}(\bullet)=\cc[s_1,s_2,s_3]\, .$$
We have the following factorization 
of the restriction of class 
$c_1c_2-c_3$ of $X$ to 
 $\mathsf{p}$,
$$ c_1c_2-c_3=(s_1+s_2)(s_1+s_3)(s_2+s_3)\, ,$$
where $c_i=c_i(T_X)$.

Let $U\subset\mathbf{P}^1\times\mathbf{P}^1\times\mathbf{P}^1$ be the 
$\mathsf{T}$-equivariant $3$-fold obtained by removing the 
three $\mathsf{T}$-equivariant lines $L_1,L_2,L_3$ passing through the point $\infty\times\infty\times\infty$. Let $D_i\subset U$
be the divisor with $i^{th}$ coordinate $\infty$.  
For a  triple of partitions $\mu_1,\mu_2,\mu_3$, let
\begin{equation}\label{j233}
 \Big\langle\prod_i\,  \tau_{k_i}(\mathsf{p})\, \Big|\, \mu_1,\mu_2,\mu_3\, \Big\rangle^{\mathsf{GW},\mathsf{T}}_{U,D}\, ,\quad
  \Big \langle\, \prod_i \text{ch}_{k_i}(\mathsf{p})\, \Big|\, \mu_1,\mu_2,\mu_3\, 
\Big \rangle^{\mathsf{PT},\mathsf{T}}_{U,D}
\end{equation}
denote the 
generating series of the $\mathsf{T}$-equivariant
relative Gromov-Witten and stable pairs
 invariants of the pair $$D=\cup_i D_i\subset U$$ 
with relative conditions $\mu_i$ along the divisor $D_i$.

The stable maps spaces are always taken with no contracted connected
components of genus great than or equal to 2.
The series \eqref{j233}  are the {\em capped descendent vertices}
following the conventions of
\cite{OblomkovOkounkovPandharipande18}.

\begin{theorem}\cite{OblomkovOkounkovPandharipande18}\label{thm:two_leg}   After the change of variables 
$-q=e^{iu}$
the following correspondence between
the 2-leg capped descendent vertices holds:
 $$ \Big\langle\, \prod_i \Hr^{\mathsf{GW}}_{k_i}(\mathsf{p})\, \Big|\, \mu_1,\mu_2,\emptyset\, 
\Big\rangle^{\mathsf{GW},\mathsf{T}}_{U,D}=
q^{-|\mu_1|-|\mu_2|}\Big\langle\, \prod_i \Hr^{\PT}_{k_i}(\mathsf{p})\, \Big|\, 
\mu_1,\mu_2,\emptyset\, \Big \rangle^{\PT,\mathsf{T}}_{U,D}$$
$\mod (s_1+s_3)(s_2+s_3)$.
\end{theorem}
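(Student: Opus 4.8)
The plan is to recall and reconstruct the structure of the proof, since this statement is the main theorem of \cite{OblomkovOkounkovPandharipande18} and is only reviewed here. The overall strategy is to work entirely in the Fock space generated by the Heisenberg descendents $\fra_k$, and to reduce both sides of the asserted identity to matrix elements of products of the vertex operators $\Hr^{\mathsf{GW}}_{k_i}(\mathsf{p})$ and $\Hr^{\PT}_{k_i}(\mathsf{p})$, evaluated between the boundary states determined by the relative partitions $\mu_1,\mu_2$ along $D_1,D_2$. The passage to the ring modulo $(s_1+s_3)(s_2+s_3)$ is what makes the comparison tractable: together with the empty partition $\emptyset$ along $D_3$ it restricts the capped three-leg vertex on $U\subset(\mathbf{P}^1)^3$ to a genuinely two-leg object, which is the regime in which both the operator formula for curves and the localization computation on stable pairs take their simplest closed form.

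The first step I would carry out is to express the capped two-leg descendent vertex on each side as a Fock-space matrix element. On the Gromov--Witten side, the equivariant capped vertex is governed by the Okounkov--Pandharipande operator formula for the equivariant Gromov--Witten theory of $\mathbf{P}^1$, which realizes the relative invariants carrying insertions of $\fra_k$ as matrix elements of explicit bosonic operators; the descendent operators $\Hr^{\mathsf{GW}}_k(\mathsf{p})$ are exactly the images of the $\psi$-class insertions under this realization, defined through the residue \eqref{eq:H^GW} and the master curve $ye^y=we^we^{-x/\theta}$. On the stable-pairs side, the capped vertex is computed by localization on the moduli of stable pairs, and the operators $\Hr^{\PT}_k(\mathsf{p})$ package the descendents $\ch_k(\mathbb{F}-\mathcal{O})$ through the normalization $\mathcal{S}^{-1}(x/\theta)$. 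After these two reductions, the content of the theorem becomes a single purely algebraic identity: that the two families of vertex operators are intertwined, under $-q=e^{\imath u}$ and $\theta^{-2}=-c_2$, by the \emph{same} Bogoliubov transformation of Fock space, with the $q^{-|\mu_1|-|\mu_2|}$ prefactor accounting for the Euler-characteristic shift carried by the relative boundary conditions.

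The step I expect to be the main obstacle is establishing this intertwining of the vertex operators. On the Gromov--Witten side $\Hr^{\mathsf{GW}}(x)$ is transcendental: extracting $\Hr^{\mathsf{GW}}_k$ requires solving \eqref{eq:master_curve} by the Lambert function and performing the residue at $w=\infty$, i.e.\ a Lagrange-inversion computation. One must show that, after $-q=e^{\imath u}$, this residue expansion reproduces the \emph{rational} series assembled from $\ch_k(\mathbb{F}-\mathcal{O})$ and the factor $\mathcal{S}^{-1}(x/\theta)$, with the $\theta$-dependence matching term by term. Here the symmetry $y\mapsto w,\ w\mapsto y,\ \theta\mapsto-\theta,\ x\mapsto x$ of \eqref{eq:master_curve} and \eqref{eq:H^GW} is what makes the problem well-posed, since it forces only even powers of $\theta$ — hence only the universal combinations of $c_2$ — to appear; the residual work is the residue bookkeeping together with the relative/rubber gluing that glues the two legs to the boundary states $\mu_1,\mu_2$. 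Working modulo $(s_1+s_3)(s_2+s_3)$ is essential precisely because it isolates this two-leg geometry, where the transcendental Gromov--Witten vertex and the localized stable-pairs vertex can be compared directly rather than through the far harder full three-leg equivariant answer.
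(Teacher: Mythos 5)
This paper contains no proof of Theorem \ref{thm:two_leg} at all: the statement is imported verbatim from \cite{OblomkovOkounkovPandharipande18}, so your reconstruction has to be measured against the proof given there. Your sketch is right about the general framework (Fock-space formalism, the Okounkov--Pandharipande operator formula/completed cycles on the Gromov--Witten side, the $\mathcal{S}^{-1}(x/\theta)$ normalization of $\ch_k(\mathbb{F}-\mathcal{O})$ on the stable-pairs side), but it has two genuine gaps. The first is your reading of the qualifier ``$\mathrm{mod}\ (s_1+s_3)(s_2+s_3)$''. Setting $\mu_3=\emptyset$ is already what makes the object a two-leg vertex; the ideal does not ``isolate the two-leg geometry,'' and it is not what ``makes the comparison tractable'' in the sense you describe. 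It records the indeterminacy of the method of proof: in \cite{OblomkovOkounkovPandharipande18} the capped $2$-leg vertex is not compared directly on the two sides, but is pinned down, via capped localization and gluing, by the descendent theories of geometries where both sides are known --- the cap $\cc^2\times\mathbf{P}^1$, the tube, and local curves over $\mathbf{P}^1$ (equivalently the $A_n\times\mathbf{P}^1$ / $\Hilb(\cc^2)$ geometries). These probing geometries see the vertex only along the divisors $s_1+s_3=0$ and $s_2+s_3=0$ in equivariant weight space, so the identity can only be concluded modulo functions vanishing on their union, i.e. modulo $(s_1+s_3)(s_2+s_3)$; this is also why the $1$-leg statement, obtained by restricting in two ways, is controlled modulo the triple product, as remarked immediately after the theorem.

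The second gap is that you locate the difficulty in the wrong place. After writing both capped vertices as Fock-space matrix elements, you assert the theorem ``becomes a single purely algebraic identity,'' an intertwining by ``the same Bogoliubov transformation,'' with the gluing relegated to ``residual work.'' There is no purely algebraic passage from the Lambert-curve residue expansion of \eqref{eq:H^GW} to the stable-pairs matrix elements: the equality of the two theories on the probing geometries is itself a geometric theorem --- the descendent $\GW/\PT$ correspondence for local curves, resting on the Okounkov--Pandharipande and Maulik--Oblomkov GW/Hilbert/DT triangle together with the Pandharipande--Pixton computations of stable-pairs descendents on local curves --- and the relative/rubber gluing that assembles the $2$-leg vertex from these pieces is the core of the argument, not an afterthought. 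By contrast, the residue bookkeeping you single out as ``the main obstacle'' is the routine part: it is essentially what Section \ref{sec:residue-computation} of the present paper carries out when extracting the closed formulas of Proposition \ref{thm:H-Cor} from the vertex operators. Without naming the local-curve correspondence (or, at minimum, the existence results of \cite{PPDC}) as input, your proposed proof cannot get off the ground.
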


The result of Theorem \ref{thm:two_leg} has two defects. Since the
third partition is empty, the result only covers the {\em 2-leg} case.
Moreover, the equality of the correspondence is not proven exactly,
but only mod $(s_1+s_3)(s_2+s_3)$. For the 1-leg vertex with partitions
$(\mu_1, \emptyset,\emptyset)$, 
Theorem \ref{thm:two_leg}
can be restricted in two ways to obtain the equality of the
correspondence
$$\text{mod} \ \ (s_1+s_3)(s_1+s_2)(s_2+s_3)\, .$$



\subsection{Non-equivariant limit} 
\label{sec:non-equiv-limit}
By following the arguments of \cite{PPDC},
a  non-equivariant $\GW/\PT$ descendent correspondence for stationary
insertions
is derived in \cite{OblomkovOkounkovPandharipande18}.
For our statements, we will follow as closely as possible
the notation of \cite{OblomkovOkounkovPandharipande18, PPDC}.

Let \(\mathsf{Heis}^{c}\) be the Heisenberg algebra with
generators \(\mathfrak{a}_{k\in\zzz\setminus \{0\}}\),
coefficients \(\cc[c_1,c_2]\), and  relations 
\[[\mathfrak{a}_k,\mathfrak{a}_m]=k\delta_{k+m}c_1c_2\, .\]
Let \(\mathsf{Heis}^c_+\subset \mathsf{Heis}^c\) be the
subalgebra generated by the elements \(\mathfrak{a}_{k>0}\), and  define the \(\cc[c_1,c_2]\)-linear map
\begin{equation}\label{cclin}
\mathsf{Heis}^c\rightarrow \mathsf{Heis}^c_+\, , \ \ \
\Phi\mapsto \widehat{\Phi} 
\end{equation}
by $\widehat{\mathfrak{a}}_k= \mathfrak{a}_k$ for $k>0$ and
\begin{equation}
  \label{eq:vac-eval}
  \widehat{\mathfrak{a}_k \Phi}=(-c_1\delta_{k+1}+\delta_{k+2}iu)\widehat{\Phi}\,,\ \ \
\text{for}\ k<0\, .
\end{equation}
When restricted to the subalgebra \(\mathsf{Heis}^{c}_+\),
the \(\cc[c_1,c_2]\)-linear map \eqref{cclin}  is an isomorphism.

For a nonsingular projective 3-fold $X$ and classes $\gamma_1, \ldots,\gamma_l\in H^*(X)$,
the hat operation make no difference inside the Gromov-Witten bracket,
\begin{equation}\label{fredfredfred}
  \langle \Hr_{\vec{k}}^{\GW}(\gamma)\rangle^{X,\GW}_\beta=\langle     \widehat{\Hr}_{\vec{k}}^{\GW}(\gamma)\rangle^{\GW}_\beta,
\end{equation}
because the treatment of the negative descendents on
the left side is compatible with the treatment of
the negative descendents by the hat operation.

Let \(\vec{k}=(k_1,\ldots,k_l)\) be a vector of non-negative integers.
Following
\cite{PPDC},
 we define the following element of \(\mathsf{Heis}^c_+\):
\[\widetilde{\Hr}_{\vec{k}}=\frac{1}{(c_1c_2)^{l-1}}
\sum_{\text{set partitions $P$ of \{1,\dots,l\}}}(-1)^{|P|-1}(|P|-1)!\prod_{S\in P}\widehat{\Hr}^{\mathsf{GW}}_{\vec{k}_S}\, ,\]
where \(\Hr^{\mathsf{GW}}_{\vec{k}_S}=\prod_{i\in S} \Hr^{\mathsf{GW}}_{k_i}\) and
  the element \(\Hr_k^{\GW}\in \mathsf{Heis}^c\) is a linear combination of monomials of \(\mathfrak{a}_i\), the expression is given by (\ref{eq:H^GW}).


For classes \(\gamma_1,\ldots,\gamma_l\in H^*(X)\)  and a
vector \(\vec{k}=(k_1,\ldots,k_l)\) of non-negative
integers, we define
\[\overline{\Hr_{k_1}(\gamma_1)\dots\Hr_{k_l}(\gamma_l)}=\sum_{\text{set partitions $P$ of \{1,\dots,l\}}}\,
\prod_{S\in P}\widetilde{\Hr}_{\vec{k}_S}(\gamma_S)\, ,\]
where \(\gamma_S=\prod_{i\in S}\gamma_i\).

\begin{theorem} \cite{OblomkovOkounkovPandharipande18}\label{thm:noneq}
 Let  \(X\) be a nonsingular projective toric 3-fold, and let 
\(\gamma_i\in H^{\geq 2}(X,\cc)\). After the change of variables
$-q=e^{iu}$, we have
 $$\Big \langle\overline{ \Hr_{k_1}(\gamma_1)\dots \Hr_{k_l}(\gamma_l)} \Big \rangle_{\beta}^{\mathsf{GW}}=
q^{-d/2}\Big \langle \Hr^{\PT}_{k_1}(\gamma_1)\dots \Hr^{\PT}_{k_l}(\gamma_l) \Big \rangle_{\beta}^{\mathsf{PT}}
\,,$$
  where \(d=\int_\beta c_1\).
\end{theorem}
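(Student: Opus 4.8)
The plan is to derive this global, non-equivariant statement from the equivariant capped-vertex correspondence of Theorem~\ref{thm:two_leg} by means of the degeneration/gluing (TQFT) formalism for toric 3-folds, together with the purely algebraic repackaging encoded in the operators $\widetilde{\Hr}$ and in the bar construction $\overline{\Hr_{k_1}(\gamma_1)\dots\Hr_{k_l}(\gamma_l)}$. First I would reduce the global descendent series on $X$ to capped vertices: a nonsingular projective toric 3-fold degenerates along the edges of its moment polytope into local charts isomorphic to the standard $U\subset\mathbf{P}^1\times\mathbf{P}^1\times\mathbf{P}^1$, and both the Gromov--Witten and stable pairs descendent series obey a degeneration formula expressing $\langle\prod_i\Hr_{k_i}(\gamma_i)\rangle_\beta$ as a sum over distributions of the curve class and the descendent insertions among the vertices, glued along edges by relative partition data $\mu$. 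The capped descendent vertices occurring at the vertices are precisely the series \eqref{j233}.

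Second, since each $\gamma_i\in H^{\geq 2}(X)$, its restriction to the $\mathsf{T}$-fixed points is unambiguous, and the Sweedler coproduct along the small diagonal distributes each $\gamma_i$ across the vertices carrying the corresponding insertion; this reduces all descendents to the point-class vertices $\Hr_k(\pt)$ weighted by the fixed-point restrictions of the $\gamma_i$. Third, I would apply Theorem~\ref{thm:two_leg} vertex by vertex. The delicate point is that the equivariant correspondence is proven only modulo $(s_1+s_3)(s_2+s_3)$ in the 2-leg case, and modulo the full product of weight sums in the 1-leg case. One must therefore show these error terms do not survive the non-equivariant limit $s_i\to 0$. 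Here the $S_3$-symmetry of the master curve \eqref{eq:master_curve} and of the vertex operator \eqref{eq:H^GW}---the involution $y\leftrightarrow w,\ \theta\mapsto-\theta$---is essential: by permuting the three legs and combining the resulting restrictions, the ambiguity is confined to a product of all three weight sums, which is killed once the legs are capped and glued into the compact geometry.

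The fourth and final step is the combinatorial matching. On the stable pairs side the capped vertices assemble into genuine disconnected invariants via ordinary products of the operators $\Hr^{\PT}_{k_i}$, whereas on the Gromov--Witten side the passage from disconnected capped data to the connected non-equivariant invariant is implemented precisely by the M\"obius-type set-partition sum defining $\widetilde{\Hr}$ and by the bar construction. The normalizing factor $1/(c_1c_2)^{l-1}$ accounts for the Heisenberg commutator $[\mathfrak{a}_k,\mathfrak{a}_m]=k\delta_{k+m}c_1c_2$ relative to the geometric pairing, and the hat evaluation \eqref{eq:vac-eval} matches the treatment of negative descendents inside the Gromov--Witten bracket recorded in \eqref{fredfredfred}. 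Tracking these normalizations through the gluing, together with the change of variables $-q=e^{\imath u}$ and the degree factor $q^{-d/2}$ with $d=\int_\beta c_1$, yields the stated equality.

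The main obstacle, I expect, is the third step: establishing that the equivariant correspondence, valid only modulo the indicated weight products, rigidifies to an exact identity for the compact toric geometry. This is where the capping construction and the leg-permutation symmetry must be leveraged with care, since the non-equivariant limit of a congruence is not automatically an equality, and one must verify that the uncontrolled residues genuinely lie in the ideal that vanishes after gluing.
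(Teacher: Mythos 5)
A point of context first: the paper never proves Theorem \ref{thm:noneq} — it is imported from \cite{OblomkovOkounkovPandharipande18} (Theorem 22 there), and the only indication of its proof given in this paper is the remark that the non-equivariant correspondence is derived in \cite{OblomkovOkounkovPandharipande18} ``by following the arguments of \cite{PPDC}.'' Your proposal reconstructs exactly that intended route — reduction to capped descendent vertices, application of the equivariant correspondence of Theorem \ref{thm:two_leg}, passage to the non-equivariant limit, and the set-partition bookkeeping built into $\widetilde{\Hr}$ and the bar construction — so at the level of strategy you are on the same path as the cited source.

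However, your third step, which you yourself flag as ``the main obstacle,'' is a genuine gap, and the mechanism you propose to close it does not work as stated. Two concrete problems. First, a compact toric 3-fold has trivalent torus-fixed points, so the capped localization formula involves 3-leg capped descendent vertices with all three partitions $\mu_1,\mu_2,\mu_3$ nonempty; Theorem \ref{thm:two_leg} treats only the 2-leg case, and the leg-permutation symmetry you invoke only upgrades the 2-leg congruence to the \emph{1-leg} congruence modulo $(s_1+s_2)(s_1+s_3)(s_2+s_3)$ (exactly as the paper notes immediately after Theorem \ref{thm:two_leg}); it gives no information whatsoever about 3-leg vertices. Supplying the 3-leg case is one of the principal geometric inputs of \cite{PPDC} (and \cite{MOOP}) and cannot be obtained from the 2-leg statement by symmetry alone. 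Second, even granting all vertex-level congruences, your assertion that errors lying in these ideals ``are killed once the legs are capped and glued into the compact geometry'' is precisely the statement that requires proof: the localization sum divides by equivariant Euler classes, so ideal membership of each vertex error does not formally imply that the error contributes zero in the limit $s_i\to 0$, and the non-equivariant limit of a congruence is not an equality — as you yourself observe. That sufficiency-of-congruences argument is the technical core of \cite{PPDC}, and your proposal identifies it rather than carries it out. In short: correct outline, same approach as the cited proof, but the two decisive steps (the 3-leg vertices and the rigidification of the congruences under gluing) are missing.
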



\subsection{Examples for \(X=\mathbb{P}^3\)}
The prefactor
$\mathcal{S}^{-1}\left(\frac{x}{\theta}\right)$
in front of \(\sum_{k=0}^\infty x^k\ch_k(\mathbb{F}-\mathcal{O}) \) in the formula for \(\Hr^{\PT}(x)\) has an expansion
which the following initial terms:
\[1+\frac{c_2}{24}x^2+\frac{7c_2^2}{5760}x^4+\dots\, .\]
Therefore, the non-equivariant limit of \(\Hr_k^{\PT}(\gamma)\) is
\[\left(\text{ch}_{k+1}(\gamma)+\frac{1}{24}\text{ch}_{k-1}(\gamma\cdot c_2)\right)\, .\]

On the Gromov-Witten  side of the correspondence, we have
\[ \langle \Hr_1^{\GW}(\gamma)\Phi\rangle=\langle\mathfrak{a}_1(\gamma)\Phi\rangle\, ,\ \quad \langle \Hr_2^{\GW}(\gamma)\Phi\rangle=\frac12\langle \mathfrak{a}_2(\gamma)\Phi\rangle\, ,\]
\[  \langle \Hr_3^{\GW}(\gamma)\Phi\rangle=\frac16\langle \mathfrak{a}_3(\gamma)\Phi\rangle+\frac{1}{24u^2}\langle c_1^2c_2\cdot\Phi\rangle\, ,\]
\[\langle \Hr_4^{\GW}(\gamma)\Phi\rangle=
  \frac1{24}\langle \mathfrak{a}_4(\gamma)\Phi\rangle-\frac{i}{12 u}\langle\mathfrak{a}_1^2(c_1\cdot\gamma)\Phi\rangle-\frac{5i}{144u^3}\langle c_1^3c_2\cdot\Phi\rangle\, ,\]
\begin{multline*} \langle\Hr_5^{\GW}\Phi\rangle=\frac1{120}\langle\mathfrak{a}_5(\gamma)\Phi\rangle-\frac{i}{24u}\langle\mathfrak{a}_1\mathfrak{a}_2(c_1\cdot \gamma)\Phi\rangle
  -\frac{1}{48u^2}\langle\mathfrak{a}^2_1(c_1^2\cdot\gamma)\Phi\rangle\\
  +\frac{1}{24u^2}\langle\mathfrak{a}_1(c_1^2c_2\cdot\gamma)\Phi\rangle-\frac{1}{64u^4}\langle c_1^4c_2\cdot \Phi\rangle\, .\end{multline*}
The operators \(\mathfrak{a}_k\) are expressed in terms of
standard descendents{\footnote{For $\mathfrak{a}_1(\gamma)$, the
    term $-\frac{c_2}{24}$ on the right is the constant
    $-\frac{1}{24}\int_Xc_2\gamma$.}}
\begin{eqnarray}\label{eq:aak}
  \mathfrak{a}_1&=&\tau_0-\frac{c_2}{24}\, , \\  \nonumber
  iu\mathfrak{a}_2/2&=&\tau_1+c_1\cdot\tau_0\, ,\\ \nonumber
  -u^2\mathfrak{a}_3/3&=&2\tau_2+3c_1\cdot\tau_1+c_1^2\cdot\tau_0\, ,
  \\ \nonumber
  -iu^3\mathfrak{a}_4/4&=&6\tau_3+11c_1\cdot\tau_2+6c_1^2\tau_1+c_1^3\cdot\tau_0
                           \, ,\\  \nonumber
  u^4\mathfrak{a}_5/5&=&24\tau_4+50c_1\cdot\tau_3+35c_1^2\cdot\tau_2+10c_1^3\cdot\tau_1+c_1^4\cdot\tau_0\, .
 \end{eqnarray}

 The descendent correspondence of Theorem \ref{thm:noneq}
 implies relations for stable pairs and Gromov-Witten
 invariants of \(\mathbf{P}^3\). For example, for $\beta$ of degree 1,
\begin{eqnarray}\label{eq:ch5L}
 - i q^{-2}\langle \text{ch}_5(\mathsf{L})\rangle &=&
                                                     \left(\frac1{u^3}\langle\tau_3(\mathsf{L})\rangle+\frac{22}{3u^3}\langle\tau_2(\mathsf{p})\rangle-\frac1{3u}\langle\tau_0\tau_0(\mathsf{p})\rangle\right)\, , \\ \nonumber
  q^{-2}\left(\langle \text{ch}_6(\mathsf{H})\rangle+\frac{1}{4}\langle \text{ch}_4(\mathsf{p})\rangle\right)&=&\left( \frac{1}{u^4}\langle\tau_4(\mathsf{H})\rangle+\frac{25}{3u^4}\langle\tau_3(\mathsf{L})\rangle+                                    \frac{70}{3u^4}\langle\tau_2(\mathsf{p})\rangle\right.\\ \nonumber
  & & \ \ 
      \left.-\frac{1}{3u^2}\langle\tau_0\tau_1(\mathsf{L})\rangle+\frac{5}{3u^2}\langle\tau_0\tau_0(\mathsf{p})\rangle\right). 
  \end{eqnarray}
  Here, \(\mathsf{p}\) is the class of point, \(\mathsf{L}\) is the class of line and \(\mathsf{H}\)  is the class of hyperplane.{\footnote{We can
      also check the relations \eqref{eq:ch5L}
      numerically up to \(u^8\) with the help of Gathmann's code
      on the Gromov-Witten side
      and previously known computations for stable pairs \cite{P}.}}

\subsection{Residues}
\label{sec:react-as-resid}

To complete our proof of Theorem~\ref{thm:cor-main}, we will compute the residues \eqref{eq:H^GW}. More precisely, we will prove the following result.
\begin{proposition}\label{thm:H-Cor}
  For \(k_i\in \zzz_{\geq 0}\) and \(\gamma_i\in H^{\ge 2}(X)\) such that
  \(\tch_{k_i+2}(\gamma_i)\in \mathbb{D}^{X\bigstar}_{\PT}\), we have:
  \begin{eqnarray*}
  \widetilde{\Hr}_{k_1+1}(\gamma_1)&=&\Cor^\circ(\tch_{k_1+2}(\gamma_1))\, ,\\
  \widetilde{\Hr}_{k_1+1,k_2+1}(\gamma_1\cdot\gamma_2)&=&\Cor^\circ(\tch_{k_1+2}(\gamma_1)\tch_{k_2+2}(\gamma_2))\, , \\
    \widetilde{\Hr}_{k_1+1,k_2+1,k_3+1}(\gamma_1\cdot\gamma_2\cdot\gamma_3)&=&\Cor^\circ(\tch_{k_1+2}(\gamma_1)\tch_{k_2+2}(\gamma_2)\tch_{k_3+2}(\gamma_3))\, ,
    \end{eqnarray*}                                               
where the right side is defined by \eqref{eq:decay-intro}-\eqref{eq:triple-bump-intro}.
  \end{proposition}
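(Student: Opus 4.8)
The plan is to compute the residue \eqref{eq:H^GW} explicitly and to match the renormalized, connected correlators $\widetilde{\Hr}_{\vec k}$ against the closed formulas \eqref{eq:decay-intro}--\eqref{eq:triple-bump-intro} monomial by monomial. The organizing principle is that $\widetilde{\Hr}_{\vec k}$ is a \emph{connected} correlator: since each Wick contraction of two modes produces a factor $c_1 c_2$ by the Heisenberg relation \eqref{eq:Heis}, joining $l$ vertex operators into a connected configuration requires at least $l-1$ contractions, and the normalization $(c_1 c_2)^{-(l-1)}$ in the definition of $\widetilde{\Hr}_{\vec k}$ is designed to strip off precisely these. The Möbius sum over set partitions then kills all disconnected contributions, so the three assertions for $l = 1, 2, 3$ amount to evaluating the one-, two-, and three-point connected pieces of products of the operators $\Hr^{\mathsf{GW}}_k$.

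First I would solve the master curve \eqref{eq:master_curve} for $y$ as a function of $w$ and $x$ by means of the Lambert function, $y = W(w e^{w} e^{-x/\theta})$, and expand in $x$; the first order term $y - w = -\tfrac{x}{\theta}\,\tfrac{w}{1+w} + O(x^2)$ shows that the integrand $\tfrac{\sqrt{dy\,dw}}{y-w}\,{:}e^{\theta\phi(y) - \theta\phi(w)}{:}$ begins at order $x^{-1}$ and that the coefficient of $x^{k+1}$ is a finite normally ordered polynomial in the modes $\fra_n$. Taking $\mathrm{Res}_{w=\infty}$ as extraction of the coefficient of $w^{-1}$ yields explicit expressions for $\Hr^{\mathsf{GW}}_k$, graded by the number of $\fra$-factors they contain. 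For a single insertion the set-partition sum is trivial, so $\widetilde{\Hr}_{k_1+1}(\gamma_1) = \widehat{\Hr}^{\mathsf{GW}}_{k_1+1}(\gamma_1)$; after distributing $\gamma_1$ by the Sweedler coproduct and applying the hat operation \eqref{eq:vac-eval} to the negative modes (which contributes the $-c_1$ and $\imath u$ factors), the one-, two-, and three-$\fra$ parts are identified with the four summands of \eqref{eq:decay-intro}, including the harmonic-number coefficients $\sum 1/i$ and $\sum 1/(ij)$.

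For $l = 2$ and $l = 3$ I would isolate the connected pieces of $\widehat{\Hr}_{k_1+1}\widehat{\Hr}_{k_2+1}$ and of $\widehat{\Hr}_{k_1+1}\widehat{\Hr}_{k_2+1}\widehat{\Hr}_{k_3+1}$ via the signed partition sum, so that only the contraction patterns linking all vertex operators remain. The cross-contractions are computed from the mode expansion together with \eqref{eq:Heis}; dividing by $(c_1 c_2)^{l-1}$ and restoring the cohomology insertions through the coproduct, one reads off the coefficients. The two-point case must reproduce the weight $\max(\max(k_1, k_2), \max(\mu_1 + 1, \mu_2 + 1))$ of \eqref{eq:2bump-intro}, which I expect to arise from summing the contraction contributions over the admissible ranges of the modes, while the three-point case collapses to the single monomial of \eqref{eq:triple-bump-intro} because connecting three vertices with only $\gamma_i \in H^{\ge 2}(X)$ leaves just one surviving $\fra$.

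The main obstacle will be the interlocking of the $\theta$-expansion with the renormalization. A priori each $\Hr^{\mathsf{GW}}_k$ is a power series in $\theta$, hence in $c_2 = -\theta^{-2}$ (only even powers surviving by the symmetry $\theta \mapsto -\theta$ noted after \eqref{eq:master_curve}), whereas the target formulas \eqref{eq:decay-intro}--\eqref{eq:triple-bump-intro} carry no explicit $c_2$. One must therefore show that the higher contraction (loop) terms, which come weighted by additional powers of $c_1 c_2$, together with the $c_2$-terms generated by the hat-evaluation of negative modes, either cancel in the connected combination or are removed by the standing hypotheses $\gamma_i \in H^{\ge 2}(X)$ and $\tch_{k_i+2}(\gamma_i) \in \mathbb{D}^{X\bigstar}_{\PT}$. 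It is precisely essentialness that forces the surviving coefficients to be $c_2$-free and to assemble into the stated expressions; verifying this cancellation, and matching the combinatorial weights exactly, is the technical heart of the argument.
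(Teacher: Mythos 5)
Your proposal takes essentially the same route as the paper's proof: expand the residue \eqref{eq:H^GW} explicitly, use the signed set-partition (connected-correlator) structure --- which in the paper appears as the factors $\widetilde{\rmbb}^{(ij)}=\rmbb^{(ij)}-1$ coming from the vertex-operator commutation relation \eqref{eq:vert-com} --- together with the vacuum evaluation \eqref{eq:vac-eval} of negative modes, and then match coefficients against \eqref{eq:decay-intro}--\eqref{eq:triple-bump-intro} for $l=1,2,3$. One clarification on the step you single out as the technical heart: in the paper the $c_2$-dependence drops out not because of essentialness but because odd powers of $r=1/\theta$ vanish by the $\theta\mapsto-\theta$ symmetry of \eqref{eq:master_curve}, while the surviving positive even powers of $r$ in the expansions \eqref{eq:w(z)}, \eqref{eq:E-factor}, \eqref{eq:D-factor}, \eqref{eq:power-diff} always carry positive powers of $t=c_1$, so those terms are killed by $\gamma_i\cdot c_1c_2=0$ for $\gamma_i\in H^{\geq 2}(X)$; the hypothesis $\tch_{k_i+2}(\gamma_i)\in\mathbb{D}^{X\bigstar}_{\PT}$ is only needed so that the closed formulas \eqref{eq:decay-intro}--\eqref{eq:triple-bump-intro} apply without the exceptional rules \eqref{eq:except-bump}.
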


\section{Residue computation}
\label{sec:residue-computation}

\subsection{Preliminary computations}
\label{sec:prel-comp}

Before starting the proof of the Proposition~\ref{thm:H-Cor},
we compute the
expansion of the terms of the residue formula \eqref{eq:H^GW}.

Consider first the constraint equation \eqref{eq:master_curve}. Solutions of the
equation are formal power series in the variable
\[r=1/\theta\, , \ \ \ \ \theta^{-2}=-c_2(T_X)\,.\]
We can solve the constraint equation iteratively
in powers of \(r\). Indeed, modulo \(r^1\), the
constraint equation implies \(w=y\), and we
start the expansion by
$$ w(x,y) =y+O(r)\, .$$
To find the next term of \(r\) in the
expansion of \(w(x,y)\), we substitute
$$w(x,y)=y+f_1(x,y)r$$
into \eqref{eq:master_curve} and expand the result of the
substitution in powers of \(r\).
The coefficient of \(r^1\) in
the expansion gives a linear equation which determines \(f_1\).
After iterating the above procedure three times, we  obtain
\begin{equation}
  \label{eq:w(z)}
  w(x,y)=y-xr\frac{y}{y+1}+(xr)^2\frac{y}{2(y+1)^3}+(xr)^3\frac{2y-1}{6(y+1)^5}+O(r^4)\, .
\end{equation}

To see the expansion of the residue \eqref{eq:H^GW}  has  positive powers of
\(t=c_1\),  we use a change variables:
\begin{equation}y=v/t\, .\label{eq:y-v}\end{equation}
The residue with respect to $w$ on right side of \eqref{eq:H^GW} is converted
to a residue with respect to $y$ via  \eqref{eq:w(z)}. Using \eqref{eq:y-v},
we will compute the residue with respect to \(v\).

In the new variables, we have
\[\sqrt{dwdy}=\left(1-\frac{xrt}{2(v+t)}-\frac{(xr)^2t^3(4v-t)}{8(v+t)^4}\right)\frac{dv}{t}+O(r^3)\, .\]

After we normal order the elements of the Heisenberg algebra in the expression for the vertex operator \(\Hr^{\GW}(x)\), 
the negative
Heisenberg operators end up next to the vacuum  \(\langle\, |\)
inside the bracket
\(\langle\cdot \rangle^{\GW}\).
Relation \eqref{eq:vac-eval},  which governs interaction with \(\langle\,|\), yields the following factor in the expression under
the residue:
\begin{eqnarray}
  \label{eq:E-factor}
  \mathrm{E}&=&\exp\left(-\frac{t}{2u}\left(\frac{w(y)^2-y^2}{r}\right)-\frac{t}{u}\left(\frac{w(y)-y}{r}\right)\right)\\ \nonumber
  &=&\exp\left(\frac{xv}{u}\right)
  \left(1-\frac{trx^2v}{2u(v+t)}+\frac{t^2r^2(3xv^2+3txv+4t^2u)}{24u(v+t)^3}\right)+O(r^3)\, .
\end{eqnarray}

The inverse of \(y-w\) in \eqref{eq:H^GW}  becomes the factor:
\begin{equation}
  \label{eq:D-factor}
  \mathrm{D}=-\frac{r}{w(y)-y}=\frac{v+t}{v}\left(1+\frac{t^2rx}{2(v+t)^2}+\frac{t^3r^2x^2(4v+t)}{12(v+t)^4}\right)+O(r^3)\, .
\end{equation}

The elements of the Heisenberg algebra that participate in the residue formula are packed into the vertex operator:
\begin{gather*}
  \mathrm{V}=\mathrm{V}_+\cdot\mathrm{V}_-\, ,\quad \mathrm{V}_+(x,y)=\exp\left(\frac{1}{r}\sum_{n>0}\frac{\fra_n}{n(\imath ut)^n}( y^{-n}- w(y)^{-n})\right)\, ,\\
    \mathrm{V}_-(x,y)=\exp\left(\frac{1}{rt}\sum_{n<0}\frac{\fra_n}{n(\imath ut)^n}( y^{-n}- w(y)^{-n})\right)\, .
\end{gather*}

Thus we need to compute the difference of powers in the expression for
the vertex operators. Using formula for \(w(y)\) \eqref{eq:w(z)}, we obtain:
\begin{multline}
  \label{eq:power-diff}
  \frac{( yt)^{-n}-( w(y)t)^{-n}}{t^nr}=\frac{nxt}{v^n(v+t)}+
  nx^2rt^2\frac{((n+1)v+nt)}{v^n(v+t)^3}\\
  +nx^3r^2t^3\frac{n((n+1)(n+2)v^2+(2n^2+3n-1)tz+n^2t^2)}{6v^n(v+t)^5}+O(r^3)\, .
\end{multline}

The above calculations  yield the
leading terms of all algebraic expressions occurring in
formula \eqref{eq:H^GW}
for the vertex
operator \(\Hr^{\GW}(x)\).  As we will see in Section
\ref{sec:proofs-bump-form},
the knowledge of these leading terms
almost immediately leads to the  simplest case of the descendent
correspondence \eqref{eq:decay-intro}.
For the  other two cases \eqref{eq:2bump-intro} and
\eqref{eq:triple-bump-intro},
we must analyze the interaction of two and three  vertex operators \(\Hr^{\GW}(x)\).
We apply standard vertex operator techniques to complete
the proof of Proposition \ref{thm:H-Cor} in Section \ref{sec:proofs-bump-form}.

\subsection{Proof of Proposition \ref{thm:H-Cor}}
\label{sec:proofs-bump-form}
 
\subsubsection{Case $\widetilde{\Hr}_{k_1+1}(\gamma_1)$} \label{cc1aa}
We start with the proof of
the formula for the self-reaction.
  We must analyze the \(r\) expansion of the residue
  \begin{equation}\label{eq:res-edv}\widetilde{\Hr}(x) = \widehat{\Hr}^\GW(x)=\Res_{v=\infty}\frac{1}{t}\mathrm{E}\cdot \mathrm{D}\cdot\mathrm{V}_+\, .\end{equation}
  More precisely,  we must compute the coefficients of
  $$r^it^j\, ,\ \ \  i+j\le 2\, .$$
  By the argument of \cite[Section~3.2]{OblomkovOkounkovPandharipande18},
  the coefficient of \(rt^j\) vanishes.
  From the computations of the \(v\) expansions
  \eqref{eq:w(z)}, \eqref{eq:E-factor}, \eqref{eq:D-factor} and \eqref{eq:power-diff}, the terms in front of \(r^i\), \(i>0\)
  are proportional to \(t\). 
  The expression under the residue sign becomes:
  \[\exp\left(\frac{xv}{u}\right)\left(\frac{v+t}{t}+x\Sigma+\frac{x^2t}{v+t}\Sigma^2+\frac{x^3t^2}{(v+t)^2}\Sigma^3\right)+O(t^3)+tO(r^2)\, ,\quad\Sigma=\sum_{n>0}\frac{\fra_n}{(\imath uv)^n}\, .\]
  After applying the residue operation to the last expression,
  we obtain the terms of formula \eqref{eq:decay-intro} in the coefficients of
  the \(x\)-expansion. \qed

  \subsubsection{Case $\widetilde{\Hr}_{k_1+1,k_2+1}(\gamma_1\cdot\gamma_2)$}\label{cc2aa}
  We show next that the double interaction term yields formula \eqref{eq:2bump-intro}.
  The  new computation that is needed for understanding the interaction term is
  \(\widehat{\Hr}_{k_1,k_2}\).
It is convenient  to assemble the expressions into a generating series
\(\widehat{\Hr}(x_1,x_2)\).

To compute \(\widehat{\Hr}(x_1,x_2)\),
we must move all  negative Heisenberg operators in the  product of
the vertex operators \(\Hr^{\GW}(x_1)\Hr^{\GW}(x_2)\) to the left, next to the vacuum \(\langle\, |\).
We use the standard vertex operator commutation relation to perform
this reshuffling:
  \begin{equation}
    \label{eq:vert-com}
    \rmvv_+(x_1,y_1)\rmvv_-(x_2,y_2 )=\rmbb(x_1,y_1,x_2,y_2) \rmvv_-(x_2,y_2)\rmvv_+(x_1,y_1 )\, ,
  \end{equation}
  $$ \rmbb=\frac{(w_2-y_1)(y_2-w_1)}{(y_2-y_1)(w_2-w_1)}\, , $$
  where \(w_i=w(x_i,y_i)\). Using the computations of Section
  \ref{sec:prel-comp},
  we derive the following expansion:
  \[\rmbb=1-\frac{r^2y_1y_2x_1x_2}{(y_1-y_2)^2(y_1+1)(y_2+1)}+O(r^3)\, .\]
  The negative Heisenberg operators interact with
  the vacuum \(\langle\, |\). We obtain:
  \[\widehat{\Hr}(x_1,x_2)=\Res_{y_1=\infty}(\Res_{y_2=\infty}(\rmvv_+^{(1)}\rmvv_+^{(2)}\rmdd^{(1)}\rmdd^{(2)}\rmee^{(1)}\rmee^{(2)}\rmbb^{(12)}))\, ,\]
  where \(\rmvv_+^{(i)}=\rmvv_+(x_i,y_i)\), \(\rmdd^{(i)}=\rmdd(x_i,y_i)\), \(\rmee^{(i)}=\rmee(x_i,y_i)\).

  From \eqref{eq:res-edv}, we see
  \begin{multline*}\widehat{\Hr}(x_1)\widehat{\Hr}(x_2)=\bigg(\Res_{y_1=\infty}\mathrm{E}^{(1)}\cdot \mathrm{D}^{(1)}\cdot\mathrm{V}_+^{(1)}\bigg)\bigg(\Res_{y_2=\infty}\mathrm{E}^{(2)}\cdot \mathrm{D}^{(2)}\cdot\mathrm{V}_+^{(2)}\bigg)=\\
    \Res_{y_1=\infty}(\Res_{y_2=\infty}(\rmvv_+^{(1)}\rmvv_+^{(2)}\rmdd^{(1)}\rmdd^{(2)}\rmee^{(1)}\rmee^{(2)}))\, ,\end{multline*}
where the second equality holds because \(V_+^{(i)}\) commute.
We conclude, after the change of variables, the generating
  function \(\widetilde{\Hr}(x_1,x_2)\) for \(\widetilde{\Hr}_{k_1,k_2}\) is given by
\[\widetilde{\Hr}(x_1,x_2)=\frac{1}{r^2t}\left(\widehat{\Hr}(x_1,x_2)-\widehat{\Hr}(x_1)\widehat{\Hr}(x_2)\right)=\Res(\rmvv_+^{(1)}\rmvv_+^{(2)}\rmdd^{(1)}\rmdd^{(2)}\rmee^{(1)}\rmee^{(2)}\widetilde{\rmbb}^{(12)})/(r^2t^3)\, ,\]
where \(\Res=\Res_{v_1=\infty}\Res_{v_2=\infty}\) and \(\widetilde{\rmbb}^{(12)}=\rmbb^{(12)}-1\). By expanding the scalar factor
\[\rmdd^{(1)}\rmdd^{(2)}\rmee^{(1)}\rmee^{(2)}\widetilde{\rmbb}^{(12)}/(r^2t^3)\]
in the operator inside the residue operation,  we obtain:
  \begin{multline}\label{eq:res-2bump}\frac{tv_1v_2x_1x_2}{(v_1-v_2)^2(v_1+t)(v_2+t)}\exp\left(\frac{x_1v_1+x_2v_2}{u}\right)\left(\frac{v_1+t}{t}+x_1\Sigma^{(1)}+\frac{x_1^2t}{v_2+t}\Sigma^{(1)}\Sigma^{(1)}\right)\\
    \left(\frac{v_2+t}{t}+x_2\Sigma^{(2)}+\frac{x_2^2t}{v_2+t}\Sigma^{(2)}\Sigma^{(2)}\right)+O(t^2)+O(r^2)\, .\end{multline}
  The residue of the  coefficient in front of \(t^{-1}\) in \eqref{eq:res-2bump} vanishes.  The coefficient in front of \(t^0\) is
  \[\exp\left(\frac{x_1v_1+x_2v_2}{u}\right)\frac{x_1x_2}{(v_1-v_2)^2}\left(v_2(1+x_1\Sigma^{(1)})+v_1(1+x_2\Sigma^{(2)})\right).\]
  After applying the \(\Res\) operation,
  we obtain:
  \[\Res_{v_1=\infty}\Res_{v_2=\infty}\exp\left(\frac{x_1v_1+x_2v_2}{u}\right)\frac{x_1^2x_2v_2}{(v_1-v_2)^2}\Sigma^{(1)}.\]
  The coefficient in front of \(x_1^{k_1+2}x_2^{k_2+2}\) in the last expression matches with the \(\fra\)-linear terms of right side of \eqref{eq:2bump-intro} that are proportional to \(c_1^0\).
    
  Finally, we compute the coefficient in front of \(t^1\) in \eqref{eq:res-2bump}:
  \begin{multline*}\frac{x_1x_2}{(v_1-v_2)^2}\exp\left(\frac{x_1v_1+x_2v_2}{u}\right)\left[x_1x_2\Sigma^{(1)}\Sigma^{(2)}+x_1^2v_2\Sigma^{(1)}\Sigma^{(1)}
      +x_2^2v_1\Sigma^{(2)}\Sigma^{(2)}\right.\\
    \left.+ \left(\frac1v_1+\frac1v_2\right)\left(v_2(1+x_1\Sigma^{(1)})+v_1(1+x_2\Sigma^{(2)})\right)\right].\end{multline*}
  The residue of the terms from the first line of the last expression form the generating function of the \(\fra\)-quadratic  terms of the right hand  side of
  \eqref{eq:2bump-intro}. The residue of the terms from the second line of the last expression form the generating function of the
 \(c_1\)-proportional \(\fra\)-linear terms  of the right side of \eqref{eq:2bump-intro}. \qed

   \subsubsection{Case $\widetilde{\Hr}_{k_1+1,k_2+1,k_3+1}(\gamma_1\cdot\gamma_2\cdot \gamma_3)$}
   Finally, we must analyze the triple interaction. T
   he computation here is parallel to
   computations in Sections \ref{cc1aa} and \ref{cc2aa}.
   The new ingredient for the triple bumping reaction is the residue formula:
  \[\widehat{\Hr}(x_1,x_2,x_3)=\Res\left( \rmvv_+^{(1)}\rmvv_+^{(2)}\rmvv_+^{(3)}\rmdd^{(1)}\rmdd^{(2)}\rmdd^{(3)}\rmee^{(1)}\rmee^{(2)}\rmee^{(3)}
      \rmbb^{(12)}\rmbb^{(23)}\rmbb^{(13)}/(r^4t^5)\right)\]
  for the generating function of the operators
  \(\widehat{\Hr}_{k_1,k_2,k_3}\). Here and 
  below \(\Res\) stands for the triple residue
  $$\Res_{v_1=\infty}\Res_{v_2=\infty}\Res_{v_3=\infty}\, .$$

  The generating function $\widetilde{\Hr}(x_1,x_2,x_3)$ for the operators \(\widetilde{\Hr}_{k_1,k_2,k_3}\) is given by:
  \begin{multline*}\widehat{\Hr}(x_1,x_2,x_3)-\widehat{\Hr}(x_1,x_2)\widehat{\Hr}(x_3)-
    \wdh(x_1,x_3)\wdh(x_2)-\wdh(x_2,x_3)\wdh(x_1)+2\wdh(x_1)\wdh(x_2)\wdh(x_3)\, .
\end{multline*}
We expand the above as
  \begin{multline*}
    \frac{1}{r^4t^5}\Res\left(\rmvv_+^{(1)}\rmvv_+^{(2)}\rmvv_+^{(3)}\rmdd^{(1)}\rmdd^{(2)}\rmdd^{(3)}\rmee^{(1)}\rmee^{(2)}\rmee^{(3)}\right.\\ \left.\left(
        \widetilde{\rmbb}^{(12)}\widetilde{\rmbb}^{(23)}\widetilde{\rmbb}^{(13)}+
        \widetilde{\rmbb}^{(12)}\widetilde{\rmbb}^{(23)}+\widetilde{\rmbb}^{(12)}\widetilde{\rmbb}^{(13)}
        +\widetilde{\rmbb}^{(23)}\widetilde{\rmbb}^{(13)}\right)\right)\,.
  \end{multline*}
  Since
  \( \widetilde{\rmbb}^{(12)}\widetilde{\rmbb}^{(23)}\widetilde{\rmbb}^{(13)}\) is proportional
to \(r^6\), we can write the last expression as
\begin{multline*}
  \frac{1}{r^4t^5}\Res\left(\rmvv_+^{(1)}\rmvv_+^{(2)}\rmvv_+^{(3)}\rmdd^{(1)}\rmdd^{(2)}\rmdd^{(3)}\rmee^{(1)}\rmee^{(2)}\rmee^{(3)}\left(
      \widetilde{\rmbb}^{(12)}\widetilde{\rmbb}^{(23)}+\widetilde{\rmbb}^{(12)}\widetilde{\rmbb}^{(13)}        +\widetilde{\rmbb}^{(23)}\widetilde{\rmbb}^{(13)}\right)\right)\end{multline*}
up to $O(r^2)$.

After expanding the expression inside \(\Res\), including the prefactor
  $\frac{1}{r^4t^5}$,  we obtain:
\begin{multline*}
  t^2\left(\frac{v_1+t}{t}+x_1\Sigma^{(1)}\right)\left(\frac{v_2+t}{t}+x_2\Sigma^{(2)}\right)\left(\frac{v_3+t}{t}+x_3\Sigma^{(3)}\right)\\
\times  \exp\left(\frac{x_1v_1+x_2v_2+x_3v_3}{u}\right)\cdot
\Big(f(12;23)+f(23;31)+f(31;12)\Big)\ +\ O(t)\ ,\end{multline*}
where 
    \[f(ij;jk)=\frac{v_iv_j^2v_kx_ix_j^2x_k}{(v_i-v_j)^2(v_j-v_k)^2(v_i+t)(v_j+t)^2(v_k+t)}\, .\]
    
    The application of \(\Res\) to the coefficient in front of \(t^{-1}\) in the last expression yields zero. On the other hand, the
    coefficient in front of \(t^0\) equals
    \begin{multline*}x_1x_2x_3\left(v_2v_3(1+x_1\Sigma^{(1)})+v_1v_3(1+x_2\Sigma^{(2)})+v_1v_2(1+x_3\Sigma^{(3)})\right)\\
      \\\times  \exp\left(\frac{x_1v_1+x_2v_2+x_3v_3}{u}\right)  \\ \times
      \left(\frac{x_2}{(v_1-v_2)^2(v_2-v_3)^2}+
        \frac{x_3}{(v_1-v_3)^2(v_3-v_2)^2}+\frac{x_1}{(v_3-v_1)^2(v_1-v_2)^2}\right)\, .\end{multline*}
    The result of application of \(\Res\)  is therefore
    equal to the generating function of the right side of \eqref{eq:triple-bump-intro}. \qed

  \section{Degree 1 series for $\mathbb{P}^3$}
  \label{sec:appendix}
\subsection{Stationary descendent series}
We provide a complete table of  the stationary stable pair
descendent series for projective $\mathbb{P}^3$ in degree $1$.
Our notation is given by three vectors
$V_{\pt}, V_{\mathsf{L}}, V_{\mathsf{H}}$
of non-negative integers which specify the stationary descendents with
cohomology insertions
$$\pt , {\mathsf{L}}, {\mathsf{H}} \in H^*({\mathbb{P}}^3)$$
corresponding to the point, line, and hyperplane classes respectively.
For example, the data $[1,2],[4,9],[6]$
corresponds to the descendent
$$\ch_3(\pt)\ch_4(\pt)\ch_6(\mathsf{L})\ch_{11}(\mathsf{L})\ch_8(\mathsf{H})\, .$$
In the table, below the full descendent series is given as
rational function in $q$.

\vspace{10pt}

\begin{center}
\begin{tabular}{l|l}
$[],[0,1],[1]$&   $q(3q^2-5\,{q}+3)$\\
$[1],[0],[]$&     $q(q^2-1)/2$\\
$[0],[0,0],[]$&   $q(q+1)^2$\\
$[0],[1],[]$&$3q(q^2-1)/2$\\
$[],[0,0,1],[]$&$2q({q}^{2}-1)$\\
$[],[1,1],[]$&$5q(q-1)^2/2$\\
$[],[0,2],[]$&$ q(5q^2-14q+5 )/6$\\
$[1],[],[1]$&$3q(q-1)^2/4 $\\
$[],[0,0,0],[1]$&$3q(q^2-1)$\\
$[],[2],[1]$&$ \frac{5q(q-1)^3}{4(1+q)} $\\
$[0],[],[1,1]$&$3q(3q^2-2q+3)/4$\\
$[],[0,0],[1,1]$&$q (9q^2-10q+9)/2$\\
  $[],[1],[1,1]$&$\frac{q(q-1)(9q^2-2q+9)}{2(1+q)}$\\
$[],[0],[1,1,1]$&$ \frac{q(q-1)(27q^2+14q+27)}{4(1+q)}$\\
$[0],[],[2]$&$ q(5q^2-2q+5 )/4$\\
$[],[0,0],[2]$&$2q(q^2-q+1)$\\
$[],[1],[2]$&$\frac{q(q-1)(9q^2-2q+9)}{4(1+q)}$\\
$[],[],[1,1,2]$&$q(9q^2-14{q}+9)/2$\\
$[],[],[2,2]$&$ q(17q^2-30q+17)/8$\\
$[],[0],[3]$&$  \frac{q(q-1)(9q^2-2q+9)}{12(1+q)}$\\
$[],[],[1,3]$&$ q(9q^2-22q+9)/8$\\
$[0],[0],[1]$&$ 3q(q^2-1)/2$\\
$[],[],[4]$&$ q(q^2-5q+1)/6$\\
$[],[3],[]$&$ \frac{q(q-1)(q^2-8q+1)}{6(1+q)}  $\\
$[2],[],[]$&$q(q^2-10q+1)/12$\\
$[],[0],[1,2]$&$\frac{q(q-1)(3q^2+q+3)}{(1+q)}$\\
$[0,0],[],[]$&$q(q+1)^2$\\
$[],[0,0,0,0],[]$&$2q(q+1)^2$\\
$[],[],[1,1,1,1]$&$ q(81q^2-102q+81)/2$
\end{tabular}
\end{center}

\vspace{10pt}

The symmetry in the above series is a consequence of the functional
equation, see  \cite[Section 1.7]{P}.
In the stationary case, the stable pairs
series are equal to the corresponding descendent series 
for the Donaldson-Thomas theory of ideal sheaves, see
\cite[Theorem 22]{OblomkovOkounkovPandharipande18}.

\subsection{Descendents of 1}
We tabulate here descendent series of $\mathbb{P}^3$ in degree $1$
with descendents of the identity $1\in H^*(\mathbb{P}^3)$
together with stationary descendents specified as
before by a triple of vectors.

\newpage

\vspace{5pt}

$\bullet$ 
With  $\ch_4(1)$ and
the rest stationary:

\vspace{10pt}
\begin{center}
\begin{tabular}{l|l}
$[],[1],[1]$&$\,{\frac{q \left( 21\,{q}^{4}+37\,{q}^{3}-88\,{q}^{2}+37\,q+21
 \right) }{ 6\left( 1+q \right)^{2}}}$\\
$ [],[0,1],[]$&$7\,q \left( q-1 \right)  \left( 1+q \right)/3$\\
$[],[],[1,2]$&${\frac{q \left( q-1 \right)  \left( 21\,{q}^{4}+79\,{q}^{3}+86
\,{q}^{2}+79\,q+21 \right) }{ 6\left( 1+q \right)^{3}}}$\\
$[0],[],[1]$&$7\,q \left( q-1 \right)  \left( 1+q \right)/4$\\
$[],[0,0],[1]$&$7\,q \left( q-1 \right)  \left( 1+q \right)/2$\\
$[],[0],[1,1]$&$
\,{\frac{q \left( 63\,{q}^{4}+116\,{q}^{3}-134\,{q}^{2}+116\,q+63
 \right) }{ 12\left( 1+q \right)^{2}}}$\\
$[0],[0],[]$&$\,q \left( 7\,{q}^{2}+2\,q+7 \right)/6$\\
$[1],[],[]$&${7}\,q \left( q-1 \right)  \left( 1+q \right)/12$\\
$[],[0,0,0],[]$&$\,q \left( 7\,{q}^{2}+2\,q+7 \right)/3$\\
$[],[2],[]$&$\,{\frac{q \left( 35\,{q}^{4}+56\,{q}^{3}-318\,{q}^{2}+56\,q+35
 \right) }{36 \left( 1+q \right)^{2}}}$\\
$[],[],[3]$&$\,{\frac {q \left( q-1 \right)  \left( 63\,{q}^{4}+232
\,{q}^{3}+218\,{q}^{2}+232\,q+63 \right) }{ 72\left( 1+q \right)^{3}}}$\\
$[],[0],[2]$&$\,{\frac{q \left( 7\,{q}^{4}+13\,{q}^{3}-18\,{q}^{2}+13\,q+7
 \right) }{3 \left( 1+q \right)^{2}}}$
\end{tabular}  
\end{center}

\vspace{10pt}
$\bullet$ With $\ch_5(1)$ and the 
rest stationary:

\vspace{10pt}
\begin{center}
\begin{tabular}{l|l}
$[0],[],[]$&$3\,q \left( q-1 \right)  \left( 1+q \right)/4$\\
$[],[0,0],[]$&$4\,q \left( q-1 \right)  \left( 1+q \right)/3$\\
$[],[1],[]$&$\,{\frac{q \left( 17\,{q}^{4}+24\,{q}^{3}-106\,{q}^{2}+24\,q+17
 \right) }{12 \left( 1+q \right)^{2}}}$\\
$[],[],[1,1]$&$\,{\frac{q \left( q-1 \right)  \left( 9\,{q}^{4}+31\,{q}^{3}+14
\,{q}^{2}+31\,q+9 \right) }{ 3\left( 1+q \right)^{3}}}$\\
$[],[],[2]$&$\,{\frac {q\left( q-1 \right)  \left( 33\,{q}^{4}+112\,{q}^{3}+
38\,{q}^{2}+112\,q+33 \right) }{24 \left( 1+q \right)^{3}}}$\\
$[],[0],[1]$&$\,{\frac {q \left( 3\,q+1 \right)  \left( q+3 \right)  \left( 4\,{
q}^{2}-7\,q+4 \right) }{6 \left( 1+q \right)^{2}}}$
\end{tabular}
\end{center}

\vspace{10pt}
$\bullet$ With
 $\ch_4(1)\ch_4(1)$ and the 
rest stationary:

\vspace{10pt}
\begin{center}
  \begin{tabular}{l|l}
$[],[0],[1]$&$
\,{\frac { q\left( q-1 \right)  \left( 49\,{q}^{4}+196\,{q}^{3}+
534\,{q}^{2}+196\,q+49 \right)}{12 \left( 1+q \right) ^{3}}}$\\
$[0],[],[]$&$\,q \left( 49+2\,q+49\,{q}^{2} \right)/36$\\
$[],[0,0],[]$&$\,q \left( 49+2\,q+49\,{q}^{2} \right)/18 $\\
$[],[1],[]$&$\,{\frac { q\left( q-1 \right)  \left( 49\,{q}^{4}+196\,{q}^{3}+
654\,{q}^{2}+196\,q+49 \right) }{18 \left( 1+q \right)^{3}}}$\\
$[],[],[1,1]$&$
\,{\frac {q \left( 441+1754\,q+4007\,{q}^{2}-3252\,{q}^
{3}+4007\,{q}^{4}+1754\,{q}^{5}+441\,{q}^{6} \right) }{72 \left( 1+q
 \right)^{4}}}$\\
$[],[],[2]$&$
\,{\frac {q \left( 49+195\,q+459\,{q}^{2}-454\,{q}^{3}+459\,{q}^{
4}+195\,{q}^{5}+49\,{q}^{6} \right) }{18 \left( 1+q \right)^{4}}}$
  \end{tabular}
\end{center}

\newpage

\vspace{5pt}

$\bullet$ With $\ch_6(1)$ and the 
rest of stationary:

\vspace{10pt}
\begin{center}
  \begin{tabular}{l|l}
$[],[0],[]$&$\,{\frac {q \left( 17\,{q}^{4}+20\,{q}^{3}-114\,{q}^{2}+20\,q+17
 \right) }{36 \left( 1+q \right)^{2}}}$\\
$[],[],[1]$&$\,{\frac {q \left( q-1 \right)  \left( 17\,{q}^{4}+48\,{q}^{3}-58
\,{q}^{2}+48\,q+17 \right) }{24 \left( 1+q \right)^{3}}}$
  \end{tabular}
\end{center}

\vspace{10pt}

$\bullet$ With $\ch_4(1)\ch_4(1)\ch_4(1)$ and the 
rest stationary:

\vspace{10pt}
\begin{center}
  \begin{tabular}{l|l}
  $[],[0],[]$&$\,{\frac {  q \left( 343\,{q}^{6}+1374\,{q}^{5}+
249\,{q}^{4}+11396\,{q}^{3}+249\,{q}^{2}+1374\,q+343 \right) }{ 108\left( 1
+q \right)^{4}}}$\\
$[],[],[1]$&$\,{\frac { q\left( q-1 \right)  \left( 343\,{q}^{6}+
2058\,{q}^{5}+3705\,{q}^{4}+29900\,{q}^{3}+3705\,{q}^{2}+2058\,q+343
 \right) }{72 \left( 1+q \right)^{5}}}$
\end{tabular}
\end{center}

\vspace{10pt}
$\bullet$ With $\ch_5(1)\ch_4(1)$ and the 
rest stationary:

\vspace{10pt}
\begin{center}
  \begin{tabular}{l|l}
    $[],[],[1]$&$\,{\frac {q \left( 84+331\,q+928\,{q}^{2}-1878\,{q}^{3}+928\,{q}^
{4}+331\,{q}^{5}+84\,{q}^{6} \right) }{ 36\left( 1+q \right)^{4}}}$\\
$[],[0],[]$&
$\,{\frac {2q \left( q-1 \right)  \left( 7+28\,{q}+87
\,{q}^{2}+28\,q^3+7\, q^4 \right) }{9 \left( 1+q \right)^{3}}}$
\end{tabular}
\end{center}

\vspace{10pt}
$\bullet$ Without stationary descendents:

\vspace{10pt}

\begin{center}
  \begin{tabular}{l|l}
    $\ch_7(1)$&$\,{\frac {q \left( q-1 \right)  \left( 2+3\,q-28\,{q}^{2}+3\,{q}^
{3}+2\,{q}^{4} \right)}{ 18\left( 1+q \right)^{3}}}$\\
    $\ch_5(1)\ch_5(1)$&$\,{\frac { 5q\left( 13+50\,{q}+179\,{q}^{2}-
580\,{q}^{3}+179\,{q}^{4}+50\,q^5+13\, q^6 \right) }{72 \left( 1+q \right)^{4}
}}$\\
$\ch_4(1)\ch_6(1)$&$\,{\frac {q \left( 119+462\,q+1737\,{q}^{2}-5852\,{q}^
{3}+1737\,{q}^{4}+462\,{q}^{5}+119\,{q}^{6} \right) }{216 \left( 1+q
 \right)^{4}}}$\\
$\ch_4(1)\ch_4(1)\ch_5(1)$&$\,{\frac {q \left( -49-245\,q-81\,{q}^{2}-6365\,{q}^{3}+6365\,{q}
^{4}+81\,{q}^{5}+245\,{q}^{6}+49\,{q}^{7} \right) }{27 \left( 1+q
                            \right) ^{5}}}$ \\
    $\ch_4(1)\ch_4(1)\ch_4(1)\ch_4(1)$ &
   $\frac{q\left(2401  + 14405\, q + 55690\, q^2 - 594229\, q^3
                                         + 1834570\, q^5 - 
 594229\, q^5 + 55690\, q^6 + 14405\, q^7 + 2401\, q^8\right)}{648 (1 + q)^6}$
\end{tabular}
\end{center}

\vspace{10pt}

\subsection{Examples of the Virasoro relations}
\subsubsection{ \(\calL_2^{\PT}\)}
Examples of the Virasoro relations for \(\calL_1^{\PT}\) were
given in \cite[Section 3]{P}.
We consider here the operator \(\calL_2^{\PT}\) for \(X=\mathbb{P}^3\).

The Chern classes of the tangent bundle of $\mathbb{P}^3$ are
$$ c_1= 4\mathsf{H}\, , \  \ \ c_1c_2 = 24 \mathsf{p}\, ,$$
The  constant term for \(k=2\) is
\begin{eqnarray*}
  \CT_2&=&-\frac{1}{2}\sum_{a+b=4}(-1)^{d^Ld^R}(a+d^L-3)!(b+d^R-3)!\, \ch_a\ch_b(c_1)+\frac{1}{24}\sum_{a+b=2}a!b!\, \ch_a\ch_b(c_1c_2)\\ &=&
  -8\ch_4(\mathsf{H})+8\ch_2(\mathsf{H})\ch_2(\pt)-2\ch_2(\mathsf{L})^2-4\ch_2(\pt),\end{eqnarray*}
where we used the evaluation \(\ch_0(\gamma)=-\int_X\gamma\) and dropped all the terms with \(\ch_1\).
The Virasoro operator for \(k=2\) is then
\begin{eqnarray*}
  \calL^{\PT}_2&=&   \CT_2+\rmR_2+3!\mathrm{R}_{-1}\ch_3(p) \\
  &=& -8\ch_4(\mathsf{H})+8\ch_2(\mathsf{H})\ch_2(\pt)-2\ch_2(\mathsf{L})^2-4\ch_2(\pt)+\rmR_2
 + 3!\rmR_{-1}\ch_3(\pt)\, .
\end{eqnarray*}

Since our examples will be for curves of degree 1 in $\mathbb{P}^3$
and since $$\ch_2(\mathsf{H})=\mathsf{H}\cdot\beta\, ,$$
we can simplify
the operator even further:
\[\calL_{2,\beta=\mathsf{L}}^{\PT}=-8\ch_4(\mathsf{H})+10\ch_2(\pt)-2\ch_2(\mathsf{L})^2+\rmR_2
 + 6\ch_3(\pt)\rmR_{-1}.\]

\subsubsection{Stationary example}
\label{sec:stationary-example}
Let us check the Virasoro constraints
of Theorem \ref{thm:Vir-toric}
for \(k=2\) and
\[D=\ch_3(\mathsf{H})\ch_2(\mathsf{L})\, . \]

The  constant term part of the relation has three summands:
\begin{eqnarray*}
-8\langle \ch_4(\mathsf{H})\ch_3(\mathsf{H})\ch_2(\mathsf{L})\rangle_{\mathsf{L}} &=& {-\frac{8q(q-1)(3q^2+q+3)}{1+q}}\, ,\\
  10\langle \ch_2(\pt)\ch_3(\mathsf{H})\ch_2(\mathsf{L})\rangle_{\mathsf{L}}
  &=&15q(q^2-1)\, ,\\
  -2\langle \ch_2(\mathsf{L})^2\ch_3(\mathsf{H})\ch_2(\mathsf{L})\rangle_{\mathsf{L}}&=&-6q(q^2-1)\, .
 \end{eqnarray*}                                                                                        

 The  rest of the relation  can be divided into  two parts.
 The  first part is \(\rmR_2(D)\) which has two terms:
\begin{eqnarray*}
  6\langle \ch_3(\mathsf{H})\ch_4(\mathsf{L})\rangle_{\mathsf{L}}
  &=&\frac{15 q(q-1)^3}{2(1+q)}\, ,\\
  6\langle \ch_5(\mathsf{H})\ch_2(\mathsf{L})\rangle_{\mathsf{L}}
  &=&\frac{q(q-1)(9q^2-2q+9)}{2(1+q)}\, .
      \end{eqnarray*}
      The second part is
      \begin{eqnarray*}
6\langle\ch_3(\pt)\rmR_{-1}(D)\rangle_{\mathsf{L}} &=& 6\langle\ch_3(\pt)\ch_2(\mathsf{H})\ch_2(\mathsf{L})\rangle_{\mathsf{L}}+
                                                       6\langle\ch_3(\pt)\ch_3(\mathsf{H})\ch_1(\mathsf{L})\rangle_{\mathsf{L}}
        \\
                                                   &=&6\langle\ch_3(\pt)\ch_2(\mathsf{L})\rangle_{\mathsf{L}} \\
        &=& 3q(q^2-1)\, .
\end{eqnarray*}
        

Using the cancellation of poles 
\[-8\langle \ch_4(\mathsf{H})\ch_3(\mathsf{H})\ch_2(\mathsf{L})\rangle_{\mathsf{L}}+6\langle \ch_3(\mathsf{H})\ch_4(\mathsf{L})\rangle_{\mathsf{L}}+6\langle \ch_5(\mathsf{H})\ch_2(\mathsf{L})\rangle_{\mathsf{L}}=-12q(q^2-1)\, ,\]
we  easily verify the Virasoro relation
\[\Big\langle\calL^\PT_2(\ch_3(\mathsf{H})\ch_2(\mathsf{L}) )
  \Big\rangle^{X,\PT}_{\mathsf{L}}=0\, .\]

\subsubsection{Non-stationary example}
\label{sec:non-stat-example}
Let us check the Virasoro relation \(\calL_{2,\beta=\mathsf{L}}^\PT\) for 
\[D=\ch_5(1)\, , \]
a non-stationary case (not covered by Theorem \ref{thm:Vir-toric}, but
implied by Conjecture \ref{vvirr}).

The constant term part of the relation has three summands:
\begin{eqnarray*}
-8\langle\ch_4(H)\ch_5(1)\rangle_{\mathsf{L}}&=&-\,{\frac {q\left( q-1 \right)  \left( 33\,{q}^{4}+112\,{q}^{3}+
        38\,{q}^{2}+112\,q+33 \right) }{3 \left( 1+q \right)^{3}}}\, , \\
10\langle \ch_2(\pt)\ch_5(1)\rangle_{\mathsf{L}}&=&\frac{15}{2}\,q \left( q-1 \right)  \left( 1+q \right)\, ,\\
  -2\langle\ch_2^2(\mathsf{L})\ch_5(1)\rangle_{\mathsf{L}}&=&-\frac{8}{3}\,q \left( q-1 \right)  \left( 1+q \right) \, .
\end{eqnarray*}
The  rest of the relation  can be divided into  two parts:
\begin{eqnarray*}
24\langle \ch_7(1)\rangle_{\mathsf{L}}&=&{\frac { 4q\left( q-1 \right)  \left( 2+3\,q-28\,{q}^{2}+3\,{q}^
        {3}+2\,{q}^{4} \right)}{3 \left( 1+q \right)^{3}}}\, ,\\
6\langle \ch_3(\pt)\ch_4(1)\rangle_{\mathsf{L}}&=&\frac{7}{2}\,q \left(q-1 \right)  \left( 1+q \right)\, . 
\end{eqnarray*}
  
After a remarkable cancellation of poles,
\[-8\langle\ch_4(H)\ch_5(1)\rangle_{\mathsf{L}}+24\langle \ch_7(1)\rangle_{\mathsf{L}}=-\frac{25}{3}q(q-1)(1+q)\, ,\]
we verify the Virasoro relation
\[\Big\langle\calL^\PT_2(\ch_5(1) )
  \Big\rangle^{X,\PT}_{\mathsf{L}}=0\, .\]

\Addresses

\end{document}